\documentclass[a4paper]{amsart}
\usepackage{float,subcaption}
\usepackage[T1]{fontenc}
\usepackage[utf8]{inputenc}
\usepackage{mathtools}
\usepackage{thmtools}
\usepackage{thm-restate}
\usepackage{bm}
\usepackage{stmaryrd}
\usepackage{amssymb}
\usepackage{todonotes}

\usepackage[inline]{enumitem}
\usepackage{soul}
\usepackage[normalem]{ulem}

\usepackage{tikz}

\usepackage[dvipsnames]{xcolor}
\usepackage{hyperref}
\definecolor{linkblue}{named}{MidnightBlue}
\hypersetup{colorlinks=true, linkcolor=linkblue,  anchorcolor=linkblue,
	citecolor=linkblue, filecolor=linkblue, menucolor=linkblue,
	urlcolor=linkblue}

\usetikzlibrary{positioning,arrows,calc,matrix,tqft,through,quotes}
\hypersetup{
    colorlinks,
    linkcolor={blue!50!black},
    citecolor={blue!50!black},
    urlcolor={blue!80!black}
}

\usepackage{pat}

\newcommand{\sm}{\smallsetminus}

\usetikzlibrary{positioning,arrows,calc,matrix,tqft,through}\usepackage[T1]{fontenc}
\usetikzlibrary{calc,math,lindenmayersystems}

\newcommand{\nocontentsline}[3]{}
\let\origcontentsline\addcontentsline
\newcommand\stoptoc{\let\addcontentsline\nocontentsline}
\newcommand\resumetoc{\let\addcontentsline\origcontentsline}

\crefname{p}{}{}
\creflabelformat{p}{#2(#1)#3}

\usepackage{todonotes}

\usepackage[longnamesfirst,numbers,sort&compress]{natbib}

\newcommand{\Ucal}{\ensuremath{\mathcal{U}}}
\newcommand{\Vcal}{\ensuremath{\mathcal{V}}}
\newcommand{\Wcal}{\ensuremath{\mathcal{W}}}

\newcommand{\cay}{\mathsf{Cay}}

\newcommand{\diam}{\mathsf{diam}}
\newcommand{\ab}{\ensuremath{(A,B)}}
\newcommand{\ba}{\ensuremath{(B,A)}}
\newcommand{\dc}{\ensuremath{(D,C)}}
\newcommand{\cd}{\ensuremath{(C,D)}}

\newcommand{\cartprod}{\mathbin{\Box}}

\newenvironment{txteq*}
{
	\begin{equation*}
	\begin{minipage}[t]{0.85\textwidth} 
	\em                                
}
{\end{minipage}\end{equation*}\ignorespacesafterend}
\newenvironment{claimproof}{%
  \par\noindent\textit{Proof of Claim.} \ignorespaces
}{%
  \hfill$\lozenge $\par
}
\definecolor{OliveGreen}{RGB}{120, 140, 47}

\title{Vertex cuts and median decompositions}
\author{Joseph P. MacManus and Bobby Miraftab}

\date{17 June, 2026}

\begin{document}

\begin{abstract}
Median decompositions were introduced by Stavropoulos in 2015 as a generalisation of tree decompositions. In this paper, we further develop and exposit this theory as a tool in structural graph theory to study systems of vertex separations.

Generalising the well-known fact that nested systems of vertex separations produce tree decompositions of a graph over the structure tree, we describe how a (not necessarily nested) system of separations produces a median decomposition. The median graph in this decomposition is the `dual median graph' constructed by Sageev. If the system of cuts is nested then this median decomposition recovers precisely the aforementioned tree decomposition. We prove a theorem asserting that this decomposition is `uniquely minimal', and describe how Sageev--Roller duality manifests in median decompositions.

As an application of our structural approach, we extend a theorem of Stavropoulos from finite graphs to all graphs, which states that the median-width a graph is equal to its clique number. 

We also describe the link between (canonical) median decompositions and (equivariant) coarse embeddings/quasi-isometries into median graphs. A corollary of these results is a characterisation of when a finitely generated group acts metrically-properly/geometrically on a median graph, in terms of canonical median decompositions of its Cayley graphs.

\end{abstract}

\maketitle


\section{Introduction}

It is a fundamental fact, originating in work of Dunwoody, that a nested system of vertex separations $\Sigma$ of a graph determines a tree $T_\Sigma$, sometimes called the \emph{structure tree}. This viewpoint underlies the theory of \emph{tree decompositions}. A tree decomposition may be seen as a way of encoding a nested family of vertex cuts in a form that exposes both the global structure of the graph and the local connectivity within each part.
Tree decompositions have become an indispensable tool in structural graph theory. Introduced first by Halin in \cite{halin}, they played a key role in the famous Graph Minors project of Robertson and Seymour. The construction of a tree decomposition over the structure tree was described by Carmesin et al. in the finite case \cite{connectivity}, and Elbracht et al. in the infinite case \cite{elbracht2022trees}. The edges of a tree decomposition define a system of separations, and these operations are, in a sense, dual to each other.


The effectiveness of the structure tree and tree decompositions rests heavily on the assumption that the underlying system is nested. Of course, many naturally arising separation systems in graph theory do not have this property, featuring pairs of separations which \emph{cross}. One way to overcome this limitation is given by a classical construction of Sageev \cite{sageev1995ends} (see also \cite{nica2004cubulating,chatterji2005wall}). This associates to a non-nested system of separations a higher-dimensional object encoding its nesting and crossing structure. This object is a \emph{median graph}---or equivalently, in more geometric terms, a CAT(0) cube complex \cite{roller-thesis, chepoi-cat0}. In this paradigm, it is the \emph{$\Theta$-classes}, or \emph{hyperplanes}, which encode the separations, rather than the edges. When the input system of separations is totally nested, Sageev's construction recovers the structure tree exactly. 
Since Sageev's seminal work, the study of groups acting on median graphs and CAT(0) cube complexes has grown into a hugely powerful toolbox within geometric group theory with countless applications. See \cite{schwer2023cat, hruska2014finiteness} for introductions to and surveys of this topic.

The passage from trees to median graphs suggests a natural generalisation of tree decompositions, where we instead organise over a median graph. Such objects, called \emph{median decompositions}, were introduced by Stavropoulos in 2015 \cite{stavropoulos2015medianwidth}. Median decompositions have so far been used to study the cops-and-robber game \cite{stavropoulos2016cops} as well as the $k$-chordality problem \cite{MR4968370}.

The goal of the present paper is to advertise and exposit median decompositions as a useful tool in structural graph theory, further developing their theory and bringing it more in-line with the rich study of cubulation in geometric group theory. We will pay particular attention to their link with Sageev's construction, and how the phenomenon of Sageev--Roller duality presents itself within median decompositions. While this theory is closely related to the usual `spaces with walls' paradigm, we remark that it is not quite the same. The key distinction is that the separations we consider in our graphs are \emph{vertex separations}, meaning the two half-spaces overlap (a similar set-up is used by Hsu--Wise in \cite{hsu2010cubulating} in the context of 2-complexes, though not in this generality). This, in-turn, equips the dual median graph with additional data encoding how these separations interact locally. Morally, it is precisely this additional data which leads to the notion of a median decomposition, which is perhaps better-suited to applications in structural graph theory. A core idea we hope to put across is that this additional data is also `seen' by Sageev--Roller duality.

Firstly, parallel to the tree decompositions of Carmesin et al. and Elbracht et al. constructed over the structure tree of a nested system, we describe how to construct a median decomposition over the dual median graph of a (not necessarily nested) system of separations.

\begin{restatable}[cf.~\Cref{cor:uniqueness}]{thm}{main-result-intro}\label{thm:main-result-intro}
Let $G$ be a graph, and let $\Sigma$ be a discrete, crossing-finite, ECC system of separations. Then there exists a \textbf{unique} reduced, crossing-faithful median decomposition $(M, \beta)$, such that the system of separations $\Sigma_{M,\beta}$ of $G$ induced by the $\Theta$-classes of $M$ is exactly $\Sigma$. 
\end{restatable}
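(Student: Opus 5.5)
We split the statement into existence and uniqueness. For existence we take $M=M_\Sigma$ to be Sageev's dual median graph of $\Sigma$. Its vertices are the consistent orientations (ultrafilters) $\omega$ of $\Sigma$ that satisfy the descending chain condition; these are principal-type orientations, and discreteness together with the ECC hypothesis ensures they exist and form a connected median graph. Two vertices are adjacent when their orientations differ on exactly one separation. The bags are defined by
\[
\beta(\omega)=\bigcap_{(A,B)\in\omega} B ,
\]
where each oriented separation $(A,B)\in\omega$ is read as ``pointing towards $B$''.

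We then check the axioms of a median decomposition in three steps.
\begin{enumerate}
\item Every vertex and edge of $G$ lies in some bag. Given a vertex or edge, we orient each separation towards a side containing it. Crossing-finiteness and discreteness let us modify finitely many choices so that the result is consistent and DCC, hence a vertex of $M$.
\item The set $\{\omega : v\in\beta(\omega)\}$ is convex in $M$. This is the intersection of the halfspaces $\{\omega:(A,B)\in\omega\}$ over those separations with $v\notin A\cap B$. An intersection of halfspaces is convex, and it is connected by the median property.
\item Each $\Theta$-class of $M$ corresponds to exactly one $(A,B)\in\Sigma$. The two halfspaces of that class induce the separation $\bigl(\bigcup_{\omega\ni(B,A)}\beta(\omega),\ \bigcup_{\omega\ni(A,B)}\beta(\omega)\bigr)$, and we show this equals $(A,B)$. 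The inclusion ``$\subseteq$'' is immediate. For ``$\supseteq$'', each vertex of $A$ lies in a bag on the correct side, by the argument of step (1) with that orientation forced.
\end{enumerate}
Reducedness (no bag contained in a neighbouring bag across an edge) and crossing-faithfulness (crossing $\Theta$-classes correspond exactly to crossing separations) should follow from the ultrafilter description. Two classes cross iff all four quadrant orientations are realised, and this matches crossing in $\Sigma$ by consistency.

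For uniqueness, let $(M',\beta')$ be another reduced, crossing-faithful decomposition with $\Sigma_{M',\beta'}=\Sigma$. We build a map $\phi\colon M'\to M_\Sigma$ sending a vertex $x$ to the orientation choosing, for each $\Theta$-class, the halfspace containing $x$. This is a consistent DCC ultrafilter, because halfspaces of a median graph behave this way.
\begin{itemize}
\item Injectivity: distinct vertices are separated by a $\Theta$-class, and distinct $\Theta$-classes of $M'$ give distinct separations. Otherwise two parallel classes would induce the same separation, and the bags between them would violate reducedness.
\item Adjacency is preserved, so $\phi$ is an isometric embedding, with image a convex subcomplex whose hyperplanes are all of $\Sigma$.
\item Surjectivity uses crossing-faithfulness. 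Pairwise crossing in $M_\Sigma$ is then reflected in $M'$, and by Roller/Sageev duality (a median graph is determined by its pocset of halfspaces) the convex image containing all hyperplanes with the same crossing and nesting data must be everything.
\item Finally $\beta'(x)=\beta(\phi(x))$. The inclusion $\beta'(x)\subseteq\beta(\phi(x))$ is clear. The reverse inclusion uses convexity of the vertex-sets $\{x : v\in\beta'(x)\}$ together with reducedness.
\end{itemize}

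The main obstacle I expect is pinning down the notion of ``reduced'' so that it forbids exactly two things: redundant parallel $\Theta$-classes, and bags strictly smaller than $\bigcap B$. The delicate part is the equality of bags in the uniqueness step. My first approach there is to show that any bag strictly smaller than $\beta(\phi(x))$ leaves some vertex $v$ in a non-convex or empty region. That would contradict either the covering axiom or the requirement that $\Sigma_{M',\beta'}$ reproduces $A\cap B$ exactly.
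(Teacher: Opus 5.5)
Your architecture is the paper's: Sageev's dual graph with bags the intersections of chosen sides, $\Theta$-classes labelled bijectively by $\Sigma$, and uniqueness via pocset duality. The gap is that the lemma carrying the existence half is not proved. Steps (1) and (3) need, for each $x\in V(G)$, a \emph{DCC} ultrafilter that chooses a side containing $x$ for every separation (and, for (3), a prescribed choice on one given separation). Orienting towards $x$ and ``modifying finitely many choices'' does not produce this.

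\begin{itemize}
\item The forced orientations, on the set $\Sigma_1$ of separations with $x\notin A\cap B$, can themselves form an infinite descending chain. In \Cref{exa:no-DCC-principal-filter}, which is discrete and nested, at $\omega$ they are $(B_1,A_1)>(B_2,A_2)>\cdots$. So no DCC ultrafilter has $\omega$ in its bag, and (M1) fails. Excluding this is exactly the role of the ECC, which your step (1) never uses.
\item The free separations $\Sigma_2$ (those with $x\in A\cap B$) may be infinitely many. An arbitrary choice on them can be inconsistent infinitely often, so finitely many changes cannot repair it.
\end{itemize}

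\Cref{prop:x-principle} handles both problems. The forced part is an ultrafilter on $\Sigma_1$, and it is DCC by the ECC. A free separation nested with a forced one lies below it in one of its orientations. Hence the forced part together with \emph{any} ultrafilter on $\Sigma_2$ is an ultrafilter on $\Sigma$. For that ultrafilter one takes a vertex of $M_{\Sigma_2}$ at the tail of an edge with the prescribed label. Only after this does a finite modification work, and only for edges $uv$. Starting from a $u$-principal $\Ucal$, you flip the separations in $\Ucal$ that have $v$ strictly on the other side. There are finitely many of these, because they lie in $\Ucal\sm\Vcal$ for a $v$-principal DCC $\Vcal$ (\Cref{lem:symmetric-diff}).

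Your notion of ``reduced'' is also off. In the paper (\Cref{def:reduced}) it means that every induced separation is essential and that $g\colon\mathfrak h\mapsto(A_{\mathfrak h},B_{\mathfrak h})$ is injective. For $M_\Sigma$ both are immediate from your step (3). The property you propose instead, that no bag lies inside a neighbouring bag, is false for dual decompositions. For the star separations of $C_5$ in \Cref{sec:clique}, the ultrafilter $\{s_1^r,\dots,s_5^r\}$ has empty bag and is adjacent to the one with bag $\{1\}$.

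In the uniqueness half, the halfspace orientation at a vertex of $M'$ is an ultrafilter on $\vec{\mathcal H}(M')$. Its image under $g$ is an ultrafilter on $\Sigma$ only because $g$ is injective and \emph{reflects} the order; this is not a general feature of median graphs. In \Cref{exa:not-crossing-faithful}, two crossing classes carry labels $s<t$, so a corner of the square would be assigned both $s$ and $t^r$. You must show that $g$ is a pocset isomorphism, using injectivity, \Cref{prop:nest-cross-induced-seps}(2) (where essentiality enters) and crossing-faithfulness. Then $\phi$ is a bijection directly by \Cref{rem:ultrafilters-on-hyperplanes}.

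Your ``convex image'' step would not follow from isometry alone. The path of length two inside the square, in the example after \Cref{cor:uniqueness}, is an isometric median embedding meeting every $\Theta$-class, yet it is not onto. The bag equality you call delicate is just \Cref{prop:bags-are-intersections}, which holds for every median decomposition without any reducedness. A vertex of $M'$ outside the nonempty convex fibre of $v$ is separated from that fibre by some $\Theta$-class, and the side of the induced separation on that vertex's halfspace then omits $v$.

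Finally, the paper maps the other way, $M_\Sigma\to M'$. That direction needs only that $g$ be a monotone bijection, so it yields a median embedding even without crossing-faithfulness, which is then used only for surjectivity.
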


The median graph $M$ in \Cref{thm:main-result-intro} is the dual median graph produced by Sageev's construction, and we refer to the median decomposition $(M,\beta)$ as the \emph{dual median decomposition}. See \Cref{thm:median_decomposition} for details of its construction. We remark that if the system $\Sigma$ in \Cref{thm:main-result-intro} is nested, then the construction of the dual median decomposition precisely recovers the usual tree decomposition over the structure tree. This theorem can be interpreted as a manifestation of Sageev--Roller duality for median decompositions; every (sufficiently sensible) system of separations is associated to a `good' median decomposition, every such median decomposition induces a system of separations, and these operations are inverse to one another. 

 The crossing-faithful hypothesis (see \Cref{def:crossing-faithful}) in the above is necessary for the uniqueness statement. However, median decompositions which do not satisfy this assumption are still closely related to the dual median decomposition by the following elaboration on \Cref{thm:main-result-intro}. 

 \begin{thm}[cf.~\Cref{prop:median-decomp-contains-the-dual}]
     Let $G$ be a graph, and let $\Sigma$ be a discrete, crossing-finite, ECC system of separations. Let $(M, \beta)$ be the dual median decomposition. Let $(N, \alpha)$ be another reduced median decomposition such that the system of separations $\Sigma_{N,\alpha}$ of $G$ induced by the $\Theta$-classes of $N$ is exactly $\Sigma$. Then there exists a median embedding $\varphi : M \hookrightarrow N$ such that $\alpha \circ \varphi = \beta$. The map $\varphi$ is surjective if and only if $(M',\alpha)$ is crossing-faithful.
 \end{thm}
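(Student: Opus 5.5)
Given the reduced decomposition $(N,\alpha)$ with $\Sigma_{N,\alpha}=\Sigma$, the plan is to build $\varphi$ through Sageev's construction, taking a vertex of $N$ to the orientation of $\Sigma$ it determines. Since $\Sigma_{N,\alpha}=\Sigma$, each $\Theta$-class $\theta$ of $N$ corresponds to a separation $\sigma_\theta=(A_\theta,B_\theta)\in\Sigma$. The two half-spaces $H_\theta^\pm$ of $N$ correspond to its two sides via $A_\theta=\bigcup_{x\in H^-_\theta}\alpha(x)$ and $B_\theta=\bigcup_{x\in H^+_\theta}\alpha(x)$. Reducedness should make $\theta\mapsto\sigma_\theta$ a bijection: distinct $\Theta$-classes inducing the same separation could be collapsed.

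\textbf{The map $\psi$.} Define $\psi:V(N)\to\{\text{orientations of }\Sigma\}$ by sending $x$ to the orientation choosing, for each $\sigma_\theta$, the side whose half-space contains $x$.
\begin{itemize}
\item This orientation is consistent, since nested separations correspond to nested half-spaces.
\item It satisfies the DCC (the discrete, crossing-finite, ECC hypotheses giving the required finiteness) because only finitely many half-spaces separate any two vertices of $N$.
\end{itemize}
So $\psi(x)$ is a vertex of the dual median graph $M$. Moreover, $\psi$ is an isometric median embedding of $N$ into the cube completion. Adjacent vertices of $N$ differ in exactly one $\Theta$-class, hence their images differ in one separation, and this forces $N$ to land in $M$ as a median-closed, convex-type subgraph.

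\textbf{Constructing $\varphi$.} The real task is the reverse direction: every vertex of $M$ must be realised in $N$, and then $\varphi$ is the inverse on the image.

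The key step is to show that the vertices of $M$ used by the dual median decomposition are exactly those orientations $\omega$ for which $\beta(\omega)=\bigcap_{\sigma\in\Sigma}\omega(\sigma)$ is nonempty, or otherwise "witnessed". This is the characterisation I would expect from \Cref{thm:median_decomposition}. For such an $\omega$, I take $v\in\beta(\omega)$. By the covering axiom of median decompositions, $v\in\alpha(x)$ for some $x\in N$, and for every $\theta$ we then have $v$ on the side $\omega(\sigma_\theta)$.

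This does not immediately give $\psi(x)=\omega$, because $v$ may lie in the separator $A_\theta\cap B_\theta$. The fix is to use the subgraph-connectivity (interpolation) axiom for $\alpha$: the set $\{y: v\in\alpha(y)\}$ is convex, hence a median subgraph. Among the vertices $y$ of this set one chooses a best-aligned $y$, either by projecting onto the half-space intersection or by induction along a gate. Then $\psi(y)=\omega$ for all $\theta$ with $v$ in the separator. This is the main obstacle, together with the requirement of the DCC/finiteness to make the projection process terminate.

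Setting $\varphi(\omega)=y$ gives a median embedding $M\hookrightarrow N$ inverse to $\psi$ on $M$. The identity $\alpha\circ\varphi=\beta$ follows by comparing both sides as the sets of vertices lying on the prescribed sides. Here reducedness rules out extra vertices in the bag $\alpha(y)$, and the bag $\beta(\omega)$ is by definition maximal.

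\textbf{Surjectivity.} Suppose $\varphi$ is onto. Then $N\cong M$, and since the dual decomposition is crossing-faithful, so is $(N,\alpha)$.

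Conversely, suppose $(N,\alpha)$ is crossing-faithful. Crossing $\Theta$-classes in $N$ then span squares exactly when the corresponding separations cross in $G$, so the half-space structure of $N$ matches that of $M$. Then each $x\in N$ has $\psi(x)$ lying in the image of $\varphi$, because a consistent orientation realised in $N$ is witnessed by a bag vertex. Hence $\varphi$ is surjective. (I read the statement's ``$(M',\alpha)$'' as $(N,\alpha)$.)
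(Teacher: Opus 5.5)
\textbf{There is a genuine gap, at the very first step.} Your claim that $\psi(x)$ is a consistent orientation ``since nested separations correspond to nested half-spaces'' is false without crossing-faithfulness. By \Cref{prop:nest-cross-induced-seps}(2), if $(A_{\mathfrak h},B_{\mathfrak h})<(A_{\mathfrak h'},B_{\mathfrak h'})$, then either $\mathfrak h_+\subset\mathfrak h'_+$ \emph{or} $\mathfrak h,\mathfrak h'$ cross, and the second option is exactly the failure of crossing-faithfulness.

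A concrete counterexample is \Cref{exa:not-crossing-faithful} ($P_4$ decomposed over a square). Let $\mathfrak h,\mathfrak h'$ be the crossing classes inducing $(\{1,2\},\{2,3,4\})$ and $(\{1,2,3\},\{3,4\})$. The corner in $\mathfrak h_+\cap\mathfrak h'_-$ is sent by $\psi$ to an orientation containing $(\{1,2\},\{2,3,4\})$ and $(\{3,4\},\{1,2,3\})$. This violates upward closure, because $(\{1,2\},\{2,3,4\})\le(\{1,2,3\},\{3,4\})$. So $N$ does not land in $M$. The containment goes the other way, and $N$ is in general strictly larger: if $\psi$ embedded $N$ into $M$ with $\varphi$ as its inverse, you would get $N\cong M$ always, contradicting the ``if and only if''.

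Your construction of $\varphi$ has a second gap. It only treats $\omega$ with $\beta(\omega)\neq\emptyset$, but $M_\Sigma$ can have vertices with empty bags. For example, for $\Sigma_{\mathsf{star}}$ on $C_5$, the ultrafilter $\{(V(G)\sm\{u\},N[u]) : u\in V(G)\}$ has empty bag. So the ``witnessed'' characterisation is false. The ``best-aligned $y$'' step, which you yourself flag as the main obstacle, is also never carried out. Your surjectivity argument reuses the same false ``witnessed by a bag vertex'' reasoning.

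\textbf{The missing idea is to pull ultrafilters back rather than push vertices forward.} Consider the bijection $g:\vec{\mathcal H}(N)\to\Sigma$, $\mathfrak h\mapsto(A_{\mathfrak h},B_{\mathfrak h})$. It is injective by \ref{red2} and surjective since $\Sigma_{N,\alpha}=\Sigma$. By \Cref{prop:nest-cross-induced-seps}(1) it is monotone and reversal-preserving.
\begin{itemize}
\item \emph{Pull-back.} For every DCC ultrafilter $\omega$ on $\Sigma$, $g^{-1}(\omega)$ is a DCC ultrafilter on $\vec{\mathcal H}(N)$. Upward closure pulls back along monotone maps, and a strictly descending chain in $g^{-1}(\omega)$ maps injectively to one in $\omega$.
\item \emph{Definition of $\varphi$.} By \Cref{rem:ultrafilters-on-hyperplanes}, $g^{-1}(\omega)$ determines a unique vertex $\varphi(\omega)$, the single point of $\bigcap_{\mathfrak h\in g^{-1}(\omega)}\mathfrak h_+$. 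This works uniformly for all $\omega$, empty bags included, with no fibre or gate argument.
\item \emph{Embedding.} Medians are preserved because $g^{-1}$ commutes with the union/intersection formula of \Cref{lem.it.is.median.graph}. Injectivity and adjacency follow from $g$ being a bijection.
\item \emph{Bags.} The identity $\alpha\circ\varphi=\beta$ is \Cref{prop:bags-are-intersections}, since the separations labelling half-spaces containing $\varphi(\omega)$ are exactly the elements of $\omega$.
\end{itemize}

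Only surjectivity needs $g^{-1}$ to be monotone as well. Crossing-faithfulness together with \Cref{prop:nest-cross-induced-seps}(2) gives this, so $g$ pushes DCC ultrafilters on $\vec{\mathcal H}(N)$ forward to DCC ultrafilters on $\Sigma$. This is exactly the setting in which your $\psi$ is valid, as the inverse of $\varphi$.

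Your converse (surjective implies crossing-faithful, via crossing-faithfulness of the dual) is fine, as is your reading of $(M',\alpha)$ as $(N,\alpha)$.
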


 In other words, the dual median decomposition is \emph{uniquely minimal} amongst median decompositions encoding a given system of separations, in a certain sense.

Much like how tree decompositions allow the definition of the parameters such as tree-width, the same definitions make sense for median decompositions.
 In \cite{stavropoulos2015medianwidth}, Stavropoulos showed that the median width of a finite graph is exactly equal to the size of its largest clique. We show how our structural approach allows us to strengthen this result from finite graphs to all graphs.

\begin{restatable}[{cf.~\Cref{cor:clique-med-decomp}}]{thm}{clique}\label{thm:main-result-intro-2}
 Let $G$ be a graph. Then $\operatorname{mw}(G) = \omega(G)$. 
\end{restatable}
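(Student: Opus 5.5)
We need to show $\operatorname{mw}(G)=\omega(G)$ for an arbitrary, possibly infinite, graph $G$. The plan is to prove the two inequalities separately.

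\textbf{Lower bound.} For $\operatorname{mw}(G)\ge\omega(G)$ we use the Helly property of median graphs. Let $(M,\beta)$ be any median decomposition and $K$ a finite clique of $G$. For each $v\in K$ the set $M_v=\{x : v\in\beta(x)\}$ is connected, hence convex in the decomposition sense. The standard argument is the following: every edge $uv$ of $K$ lies in some bag, so $M_u\cap M_v\neq\emptyset$. Then Helly for convex subsets of median graphs, applied to finitely many pairwise-intersecting convex sets, gives a common vertex. That vertex carries a bag containing all of $K$, so the width is at least $|K|$.

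If the definition of median decomposition only requires connectedness of $M_v$, we first pass to convex hulls, or invoke the lemma earlier in the paper that these sets are convex. If $\omega(G)$ is infinite, the same argument applied to arbitrarily large finite subcliques gives an infinite width.

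\textbf{Upper bound.} For $\operatorname{mw}(G)\le\omega(G)$ we construct a decomposition from a separation system, via the dual median decomposition. Take $\Sigma$ to be the system of all separations $(A,B)$ with $A\cap B=N(v)$ type minimal separators; more naively, take all separations of the form $(N[C]\cup S,\,V\setminus C)$, where $C$ is a component of $G-S$ and $S$ ranges over the minimal vertex separators. The cleanest choice, following Stavropoulos's finite argument, is this: for every pair of non-adjacent vertices $u,w$, take separations separating them with clique-free crossing behaviour.

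Concretely, I would use the separations $(\{v\}\cup N(v),\, V\setminus\{v\})$ for each vertex $v$. These are finite-order only when degrees are finite, so instead I would index $\Sigma$ by pairs of non-adjacent vertices and use $(V\setminus\{w\}, V\setminus\{u\})$. Each such separation has separator $V\setminus\{u,w\}$.

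With this choice, a consistent orientation of $\Sigma$ corresponds to a choice, for each non-edge $uw$, of which endpoint to drop. The bag $\beta(x)$ is the intersection of the chosen sides, so it is a set containing no non-edge, i.e.\ a clique. Hence every bag has size at most $\omega(G)$.

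We then check the hypotheses of \Cref{thm:main-result-intro}: discreteness, crossing-finiteness and ECC. This is exactly where I expect the main obstacle, since in an infinite graph these separations cross heavily and crossing-finiteness fails.

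The fix I would try first is to avoid \Cref{thm:main-result-intro}. Instead, build the median graph directly as the set of all \emph{principal-type} orientations, namely those deviating from the orientation towards a fixed finite clique in only finitely many coordinates. Equivalently, take $M$ to be the graph whose vertices are the finite cliques of $G$ (including $\emptyset$), with edges joining cliques differing in one vertex, and with $\beta(K)=K$.

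We then verify directly that this is a median decomposition.
\begin{itemize}
\item \emph{Median graph.} $M$ is a median graph, being the simplex graph of $G$: the median of $K_1,K_2,K_3$ is the set of vertices lying in at least two of them, which is a clique.
\item \emph{Edges covered.} Every edge $uv$ of $G$ lies in the bag $\{u,v\}$.
\item \emph{Connectivity.} For each vertex $v$, the set of cliques containing $v$ is connected and convex: from $K\ni v$ one can delete the other vertices one at a time down to $\{v\}$.
\end{itemize}
The width is $\sup|K|=\omega(G)$, which completes the upper bound.
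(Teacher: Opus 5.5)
Your lower bound is the paper's argument (Proposition~\ref{prop:cliques-in-bags}). Note that \ref{m3} already requires fibres to be convex, so Helly applies directly. Drop the hedge about ``connected, hence convex'' and passing to convex hulls: connectedness does not imply convexity in a median graph, and replacing fibres by their hulls would change the bags.

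For the upper bound you take a genuinely different route.
\begin{itemize}
\item \emph{The paper's route.} It uses the star separations $(N[u],V(G)\setminus\{u\})$ for non-universal $u$. It checks that they form a discrete ECC system, which is crossing-finite when $G$ has no infinite clique, since crossing star separations come from adjacent vertices. It then takes the dual median decomposition of Theorem~\ref{thm:median_decomposition}. Bags are cliques because, for a non-edge $uv$, every ultrafilter contains $(N[u],V(G)\setminus\{u\})$ or its flip, whose first side misses $v$ or $u$ respectively. Graphs with an infinite clique are handled by the lower bound alone.
\item \emph{Your route.} You write down the simplex graph of finite cliques with $\beta(K)=K$ and check the axioms by hand. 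This avoids ultrafilters, DCC and ECC altogether, and treats graphs with infinite cliques uniformly.
\end{itemize}

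You discarded the star separations because of their infinite order, but order plays no role in this framework. In fact they are exactly what your construction encodes. The $\Theta$-class toggling $v$ has halfspaces $\{K\ni v\}$ and $\{K\not\ni v\}$, so it induces $(N[v],V(G)\setminus\{v\})$. Two such classes cross iff the vertices are adjacent. Thus your decomposition is a non-reduced relative of the paper's; for example, the class of a universal vertex induces an inessential separation. The paper's decomposition is the canonical reduced, crossing-faithful one for that system (Corollary~\ref{cor:uniqueness}). For computing the width this difference is immaterial.

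Three verifications should be made explicit.
\begin{itemize}
\item \emph{Medians.} Record that $d(K_1,K_2)=|K_1\triangle K_2|$: delete $K_1\setminus K_2$, then add $K_2\setminus K_1$, and note each edge changes one element. Hence $X\in I(K_1,K_2)$ iff $K_1\cap K_2\subseteq X\subseteq K_1\cup K_2$. So the majority set is the only candidate median. It is a finite clique because any two $2$-element subsets of $\{1,2,3\}$ meet.
\item \emph{Convexity.} Your argument for \ref{m3}, deleting vertices one at a time down to $\{v\}$, only shows that fibres are connected. Convexity follows from the interval description: every vertex on a geodesic between $K_1,K_2\ni v$ contains $K_1\cap K_2\ni v$.
\item \emph{Coverage.} State \ref{m1} explicitly, via the bag $\{v\}$.
\end{itemize}
You can also drop the vaguer ``principal-type orientations'' description; the concrete definition via finite cliques stands on its own.
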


Our proof of \Cref{thm:main-result-intro-2} is quite different in spirit to the proof of the finite case in \cite{stavropoulos2015medianwidth}, making use of the dual median decomposition mentioned above. In contrast, Stavropoulos' main tool for constructing median decompositions involves considering a `Cartesian product of tree decompositions'. We briefly recount this method in \Cref{sec:clique}.

Regarding \Cref{thm:main-result-intro-2}, an important lesson is in order. It shows that median-width alone does not encode any about the large-scale geometry of a graph. This is in stark contrast with tree-width, which is known to strongly influence the global coarse structure of a graph \cite{hickingbotham2025graphs, nguyen2025coarse, distel2025alternative}.
This is all, of course, not to say that median decompositions do not influence the global geometry of graphs. When additional hypotheses are in place, median decompositions can strongly describe the global geometry of a graph. This is explored in \Cref{sec:geometry} of this paper through the lens of what we call \emph{proper} and \emph{geometric} median decompositions. We will abstain from defining these terms in this introduction, but remark that `proper' decompositions correspond sharply to \emph{coarse embeddings} into median graphs, while `geometric' decompositions characterise quasi-isometries. 
Some of the results in this section are summarised by the following, which can be read as generalising \cite[Thm.~1.4]{hruska2014finiteness}. 

\begin{thm}[cf.~\Cref{cor:coarse-embed-trichotomy}]\label{thm:intro-geom-trichotomy}
    Let $G$ be a graph equipped with an action by a group $\Gamma$. Then the following are equivalent. 
    \begin{enumerate}
        \item\label{thm:trichotomy1} There exists a $\Gamma$-equivariant coarse embedding of $G$ into a crossing-bounded median graph. 

        \item\label{thm:trichotomy2} There exists a $\Gamma$-canonical, reduced, proper median decomposition of $G$ over a crossing-bounded median graph.

        \item\label{thm:trichotomy3} There exists  a $\Gamma$-invariant, discrete, crossing-bounded, ECC separation system of $G$ with uniform local width and increasing separations.
    \end{enumerate}
\end{thm}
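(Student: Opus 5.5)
We prove the cycle of implications (1)$\Rightarrow$(3)$\Rightarrow$(2)$\Rightarrow$(1). Each step is a $\Gamma$-equivariant version of the dictionary between median graphs and separation systems developed above. Throughout, for a median decomposition $(M,\beta)$ with $\Theta$-class $H$ and halfspaces $H^\pm$, we write $\sigma_H=(\beta(H^-),\beta(H^+))$ for the induced separation of $G$, where $\beta(X)$ denotes the union of the bags over $X$.

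\emph{(2)$\Rightarrow$(1).} Given a $\Gamma$-canonical, reduced, proper median decomposition $(M,\beta)$, define $f:V(G)\to V(M)$ by sending $v$ to a vertex of $M$ whose bag contains $v$. Since the set $\beta^{-1}(v)$ is convex, and properness bounds its diameter, this choice is well defined up to bounded error. To make $f$ equivariant, we choose $f$ on orbit representatives and extend it by the action. The bounded error absorbs the stabiliser ambiguity, since the set of admissible images is stabiliser-invariant.
\begin{itemize}
\item The upper control function comes from adjacency. Adjacent vertices $u,v$ lie in a common bag, so $d_M(f(u),f(v))$ is bounded by the uniform bound on the diameters of the sets $\beta^{-1}(v)$.
\item The lower control function is exactly what properness is designed to give. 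If $d_M(f(u),f(v))$ is bounded, then only boundedly many hyperplanes separate the two images, and properness converts this into a bound on $d_G(u,v)$.
\end{itemize}
Crossing-boundedness of $M$ is assumed in (2), so $f$ is the required coarse embedding.

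\emph{(1)$\Rightarrow$(3).} Given an equivariant coarse embedding $f:G\to M$, we pull back halfspaces. For a hyperplane $H$ of $M$ with halfspaces $H^\pm$, we set $\sigma_H=(N_r(f^{-1}H^-),N_r(f^{-1}H^+))$, thickened enough to be a vertex separation of $G$. The thickening constant $r$ is taken from the upper control of $f$, which guarantees that every edge of $G$ lies in one side. We discard the hyperplanes for which $\sigma_H$ is degenerate. The resulting system is $\Gamma$-invariant because $f$ is equivariant. The properties required in (3) transfer as follows.
\begin{itemize}
\item Crossing-boundedness: if $\sigma_H$ and $\sigma_K$ cross, then $H$ and $K$ cross or are close, and $M$ is crossing-bounded.
\item Discreteness: only finitely many hyperplanes separate two points of $M$.
\item ECC: this follows from the corresponding property of halfspaces in $M$.
\item Uniform local width: the separators $\beta(H^-)\cap\beta(H^+)$ have uniformly bounded local structure, via the lower control of $f$.
\item Increasing separations: a point far from $f^{-1}(H)$ in $G$ is far in $M$.
\end{itemize}

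\emph{(3)$\Rightarrow$(2).} We apply \Cref{thm:median_decomposition}, whose uniqueness is \Cref{cor:uniqueness}, to obtain the dual median decomposition $(M,\beta)$. It is reduced, and uniqueness forces it to be $\Gamma$-canonical, since $\Gamma$ permutes $\Sigma$. Its $\Theta$-classes correspond to $\Sigma$, so crossing-boundedness of $\Sigma$ gives crossing-boundedness of $M$. It remains to show properness, and this is the main obstacle. We must bound the size of $\beta^{-1}(v)$ and relate $d_M$ to $d_G$. The set of vertices of $M$ whose bag contains $v$ corresponds to orientations differing only on separations whose separator contains $v$, so uniform local width bounds it. Increasing separations gives the lower distance bound: vertices of $G$ at large distance must be separated by many elements of $\Sigma$.

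The step I expect to be hardest is this last one. Extracting a quantitative lower bound, namely that many separations of $\Sigma$ separate far-apart vertices, is where the precise definitions of \emph{increasing} and \emph{uniform local width} must be matched exactly with the definition of properness.
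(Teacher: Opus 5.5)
\textbf{The gap is in (2)$\Rightarrow$(1): the equivariant map cannot in general land in $M$.} If $g$ lies in the stabiliser $\Gamma_x$, equivariance forces $f(x)=f(gx)=g\cdot f(x)$. So $f(x)$ must be a $\Gamma_x$-fixed vertex of $M$. The fibre $\beta^{-1}(x)$ is $\Gamma_x$-invariant and has bounded diameter, but neither fact produces a fixed vertex. ``Bounded error'' cannot absorb this, because equivariance is an exact condition. Without a fixed vertex, your extension $f(g x_\alpha):=g\cdot f(x_\alpha)$ is simply not well defined.

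A concrete counterexample: let $G$ be the path with edges $ax$ and $xb$, and let $\Gamma=\mathbb Z/2$ swap $a$ and $b$. Let $M$ be a single edge $uv$ with $\beta(u)=\{a,x\}$ and $\beta(v)=\{x,b\}$, and let $\Gamma$ swap $u$ and $v$. This is a $\Gamma$-canonical, reduced, proper decomposition over a crossing-bounded median graph. But $\Gamma=\Gamma_x$ fixes no vertex of $M$, so no equivariant map $V(G)\to V(M)$ exists.

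The paper's fix, in \Cref{thm:proper-geom-char}(1)(a), is to map into the first cubical subdivision $M'$ instead. The group $\Gamma_x$ preserves the bounded convex fibre, so it fixes a vertex of $M'$ (the barycentre of a stabilised cube) lying uniformly close to $\beta^{-1}(x)$. One then extends equivariantly from orbit representatives. Since $V(M)\hookrightarrow V(M')$ doubles distances and $M'$ is still crossing-bounded, your upper and lower control estimates then go through unchanged.

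\textbf{The other two implications are essentially sound.}

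(3)$\Rightarrow$(2) is the paper's argument via \Cref{thm:proper-dual-decomp-char}(1), with two small points:
\begin{itemize}
\item Canonicity is seen more directly from $\Gamma$ acting on DCC ultrafilters than from uniqueness.
\item The fibre bound needs a Ramsey step. If $\mathcal U,\mathcal V\in\beta^{-1}(v)$, then $\mathcal U\setminus\mathcal V\subseteq\Sigma_v$ consists of pairwise comparable or crossing elements. Uniform local width bounds its size, and hence bounds $d(\mathcal U,\mathcal V)$.
\end{itemize}

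For (1)$\Rightarrow$(3), take $r=1$. Your system is then exactly the essential part of the dual system of the paper's pull-back decomposition. You verify its properties directly, instead of going through \Cref{thm:proper-geom-char}(2), the weak reduction and \Cref{thm:proper-dual-decomp-char}(2). Your one-line reasons for discreteness and ECC can be completed directly. Fix $x\in A_1\setminus B_1$ and $y\in B_2\setminus A_2$. Any separation lying between $(A_1,B_1)$ and $(A_2,B_2)$ comes from a hyperplane separating $f(x)$ from $f(y)$, and only finitely many such hyperplanes exist.

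Some of your other justifications are misattributed:
\begin{itemize}
\item \emph{Crossing-boundedness.} What you need is that nested hyperplanes induce nested separations, so crossing separations must come from crossing hyperplanes. ``Or are close'' gives no bound.
\item \emph{Uniform local width.} This comes from the upper control of $f$ together with crossing-boundedness, not the lower control. Suppose $k$ nested hyperplanes each have images of points of $N[x]$ on both sides. Then $k$ hyperplanes separate $f(y)$ from $f(z)$ for some $y,z$ with $d(y,z)\le 2$, which bounds $k$.
\item \emph{Increasing separations.} It is the lower control that gives this property.
\end{itemize}
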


A similar result for quasi-isometries---though without the third equivalent condition; see Remark~\ref{rem:quasi-dense} for discussion---is given in Theorem~\ref{thm:proper-geom-char}. 
If no group action is present, one can simply take $\Gamma = \{1\}$ in the above, which is interesting in its own right. For example, it is a theorem of Haglund--Wise that every hyperbolic group is quasi-isometric to a bounded-degree median graph \cite{haglund2012combination}. The results of Section~\ref{sec:geometry} are also closely related to a similar result for \emph{graph decompositions}, which appears in Knappe's thesis; see \cite[\S4.3]{knappe2025graph}.
It is interesting to note that a graph admits a coarse embedding into some crossing-bounded (a.k.a. finite dimensional) median graph if and only if it has finite asymptotic dimension \cite{kasprowski2022coarse, wright2012finite}. In particular, \Cref{thm:intro-geom-trichotomy} suggests a method for bounding the asymptotic dimension of a graph, by exhibiting a sufficiently sensible system of separations.

Another corollary of the results in \Cref{sec:geometry} is the following structural characterisation of when a finitely generated group $\Gamma$ acts metrically-properly (and coboundedly) on a median graph. 

\begin{restatable}[cf.~\Cref{cor:geom-med-decomp-cayley-graphs}]{thm}{mainn}\label{thm:cayley-char}
Let $\Gamma$ be a finitely generated group and $S \subset \Gamma$ a finite generating set. Let $G = \cay(\Gamma,S)$ be the associated Cayley graph. Let $M$ be a median graph. Then $\Gamma$ acts metrically-properly (and coboundedly) on $M$ if and only if $G$ admits a $\Gamma$-canonical proper (resp. geometric) median decomposition over $M$. 
\end{restatable}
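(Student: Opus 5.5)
The plan is to deduce \Cref{thm:cayley-char} from the general equivariant characterisations of \Cref{sec:geometry}: \Cref{thm:intro-geom-trichotomy} for the proper case, and \Cref{thm:proper-geom-char} for the geometric case. The key observation is that, for $G = \cay(\Gamma,S)$ with $S$ finite, the orbit map $\Gamma \to G$ is an isometry onto the vertex set. Consequently, metric notions for an action of $\Gamma$ on $M$ turn into metric notions for equivariant maps $G \to M$. Throughout I take \emph{metrically proper} to mean that for every $R$ and every basepoint $x$, the set $\{g : d(x,gx)\le R\}$ is finite; equivalently, $d(x,gx)\to\infty$ as $|g|_S\to\infty$.

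For the forward direction, suppose $\Gamma$ acts metrically-properly on $M$. Fix a vertex $x_0\in M$ and define the orbit map $f\colon G\to M$ by $f(g)=gx_0$ on vertices. This map is $\Gamma$-equivariant and Lipschitz with constant $\max_{s\in S} d(x_0,sx_0)$. Metric properness of the action gives a lower control function, because $d(gx_0,hx_0)=d(x_0,g^{-1}hx_0)$ and the balls of $G$ are finite. Hence $f$ is an equivariant coarse embedding. If the action is also cobounded, $f$ is quasi-surjective, so $f$ is a quasi-isometry.

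The delicate point is that the decomposition must be over $M$ itself. \Cref{thm:intro-geom-trichotomy} as stated produces a decomposition over some crossing-bounded median graph, which in general is not $M$. So I would instead use the intermediate result of \Cref{sec:geometry} that converts an equivariant coarse embedding $f\colon G\to M$ directly into a canonical proper median decomposition over $M$. I expect the bags to be the preimages under $f$ of bounded neighbourhoods: $\beta(m)$ is the set of vertices $v$ with $d(f(v),m)\le C$, where $C$ is chosen so that adjacent vertices land in a common bag. The decomposition axioms then follow from convexity of balls and half-spaces in median graphs, and $\Gamma$-canonicity follows from equivariance of $f$. The same construction gives a geometric decomposition in the cobounded case. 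If \Cref{thm:intro-geom-trichotomy} requires $M$ to be crossing-bounded and the target statement does not, I would check that direction (1)$\Rightarrow$(2) does not actually use that hypothesis.

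For the converse, suppose $(M,\beta)$ is a $\Gamma$-canonical proper (resp.\ geometric) median decomposition. Then $\Gamma$ acts on $M$ compatibly with $\beta$. By the results of \Cref{sec:geometry}, there is an equivariant coarse embedding (resp.\ quasi-isometry) $G\to M$, obtained by sending $v$ to some $m$ with $v\in\beta(m)$. Choose this $m$ for the identity vertex, say $m_0$, and note that $g\mapsto gm_0$ is then within bounded distance of that map. A coarse embedding from $G$ forces $d(m_0,gm_0)\to\infty$ as $|g|_S\to\infty$, which is metric properness. Quasi-surjectivity of the map then gives coboundedness.

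The main obstacle is keeping the median graph fixed as $M$ in both directions, rather than passing to a dual or cubulated graph. This requires citing the right version of the \Cref{sec:geometry} results and handling the fact that $\Gamma$ may act on $M$ with inversions of $\Theta$-classes. If inversions cause trouble, I would first show that the bag construction above is insensitive to them, since it uses only distances in $M$.
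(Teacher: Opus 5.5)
Your converse direction is fine and is essentially the route the paper intends. The corollary is read off from \Cref{thm:proper-geom-char}. Since $\Gamma$ acts freely on $\cay(\Gamma,S)$, canonicity gives $gm_0\in\beta^{-1}(g)$ for any $m_0\in\beta^{-1}(1)$. So the orbit map $g\mapsto gm_0$ itself serves as the equivariant map in \Cref{thm:proper-geom-char}(1), with no cubical subdivision and no crossing-bounded hypothesis needed; this is \Cref{rem:discussion-of-proper-geom-char-hyp}(1). The forward direction, however, has a genuine gap. The decomposition you propose, $\beta(m)=\{v : d(f(v),m)\le C\}$, has as fibre over $v$ the ball of radius $C$ about $f(v)$. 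Balls in median graphs are not convex, unlike in trees. In the grid $\mathbb Z^2$, the vertices $(1,0)$ and $(0,1)$ lie in the unit ball about the origin, but the geodesic through $(1,1)$ leaves it. So \ref{m3} fails, and you do not obtain a median decomposition at all.

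The object that works is the pull-back decomposition of \Cref{def:pull-back} along the orbit map $f$. Unwinding the definition and using \Cref{thm:kakutani}, its fibre over $x$ is exactly the convex hull of the finite set $f(N[x])$. Hence \ref{m1}--\ref{m3} and $\Gamma$-canonicity are immediate. What is no longer automatic is boundedness of fibres. The reason is that $M$ is not assumed crossing-bounded, and convex hulls of bounded sets in infinite-dimensional median graphs can be unbounded; compare the infinite star in the infinite cube following \Cref{thm:proper-geom-char}.

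The missing idea is that $G$ has bounded degree, with $|N[x]|\le |S\cup S^{-1}|+1$. Any $\Theta$-class separating two points of the hull must separate two points of $f(N[x])$. So the fibre has diameter at most $\binom{|N[x]|}{2}\diam f(N[x])$, which is uniformly bounded because $f$ is coarsely Lipschitz. This is precisely the bounded-degree case of \Cref{thm:proper-geom-char}(2). It is where finiteness of $S$ is essential beyond making $f$ a coarse embedding, and your proposal never uses it for the fibres. Your worry about dropping a crossing-bounded hypothesis from \Cref{thm:intro-geom-trichotomy} is beside the point, since that route produces a decomposition over a dual median graph rather than $M$. Once fibres are bounded, proper support follows from $x\in\beta(f(x))$ and the lower control function of $f$. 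Quasi-dense support in the cobounded case follows in the same way.
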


This could be compared with a theorem of Kuske--Lohrey, which asserts that a finitely generated group is virtually free (i.e. acts geometrically on a tree) if and only if its Cayley graphs have finite tree-width \cite{kuske2005logical}. These theorems are not direct analogues of each other, however, and an important distinction should be made here. In particular, the median decompositions in \Cref{thm:cayley-char} are required to be canonical, whereas the tree decompositions in \cite{kuske2005logical} have no such assumptions. It is not possible to drop the canonicity from our theorem, as the class of groups acting on metrically properly and coboundedly on median graphs is not closed under quasi-isometries \cite{fournier2023no}, unlike the class of virtually free groups. One striking example of this comes from the mapping class group of a (non-degenerate) finite type surface; such a group is quasi-isometric to a finite-dimensional median graph, but there does not exist another median graph in the same quasi-isometry class which admits a metrically-proper, cobounded action \cite[\S4]{petyt2022large}. In particular, it is impossible to drop the canonicity from Theorem~\ref{thm:cayley-char} while preserving the conclusion. If we \emph{do} drop canonicity, we instead obtain a characterisation of when $\Gamma$ admits a coarse embedding/quasi-isometry to the median graph $M$ in terms of median decompositions; see \Cref{thm:proper-geom-char} for details.

Instead, we suggest that \Cref{thm:cayley-char} perhaps be read as a translation, turning the group-theoretic idea of an action on a median graph to a purely structural statement. Indeed, the hypothesis that a graph `admits a canonical proper/geometric median decomposition' makes sense on any graph with equipped with an action, not just Cayley graphs. In particular, median decompositions give a meaningful way to describe a, say, vertex-transitive graph $G$ as being `cubulated' in the GGT sense, even if $G$ is far from being a Cayley graph.

\subsection*{Outline of this paper}

This paper is structured as follows:

\begin{itemize}
    \item In \Cref{sec:prelims}, we establish some basic terminology surrounding graphs and separation systems.

    \item Next, \Cref{sec:median-graphs} introduces median graphs and their basic properties.

    \item \Cref{sec:med-decomp} discusses median decompositions, with particular attention paid to how a median decomposition of a graph gives rise to a separation system.

    \item In \Cref{sec:sageev}, we recount the details of Sageev's construction of the dual median graph. 

    \item The dual median decomposition is constructed in \Cref{sec:dual-meddecomp}, and its key properties studied. Its uniqueness is shown in \Cref{sec:duality}.

    \item \Cref{sec:clique} contains a short proof of Stavropoulos' characterisation of clique number, applicable to all graphs.

    \item Finally, in \Cref{sec:geometry} we introduce proper and geometric median decompositions, and describe how they correspond to coarse embeddings and quasi-isometries. 
\end{itemize}

\subsection*{Acknowledgments}

The first author was supported by the Additional Funding Programme for Mathematical Sciences, delivered by EPSRC (EP/V521917/1) and the Heilbronn Institute for Mathematical Research. The second author was supported by the NSERC. This research was conducted while the second author was visiting the University of Oxford. 
We are grateful to Mark Hagen for comments.


\section{Preliminaries}\label{sec:prelims}

Before we begin, we quickly establish the notational and terminological conventions of this paper.

Throughout this paper, all graphs considered are simple and undirected. 
Given a graph $G$, we denote by $V(G)$ its vertex set, and $E(G)$ is its set of (unoriented) edges. If $e$ is an edge with endpoints $x$, $y$, we denote this by $e = xy = yx$. Every such edge has two choices of \defin{orientation}, which we denote by the ordered pairs $(x,y)$ and $(y,x)$. The set of oriented edges of $G$ is denoted $\vec E(G)$. Note that there is a natural 2-1 map $\vec E(G) \to E(G)$ given by $(x,y) \mapsto xy$.

An \defin{edge cut}, or just \defin{cut} for short, is an ordered bipartition of the vertex set of $G$ into disjoint non-empty sets $A$ and $B$.
We call the sets $A$ and $B$ the \defin{sides} of the cut and denote this cut by $E[A,B]$. 
A cut \defin{separates} two non-empty subsets $C, D \subset V(G)$ if $C$ and $D$ are contained in distinct sides of the cut. The cut $E[A,B]$ is called \emph{tight} if both $G[A]$ and $G[B]$ are connected. 

A \defin{vertex separation}, or just \defin{separation} for short, is an ordered pair $(A,B)$ of subsets of $V(G)$ such that  $G[A]\sqcup G[B]=G$. In other words,
we have that $A\cup B = V(G)$ and no edge joins $A\sm B$ to $B\sm A$.
A separation $(A,B)$ is said to be \defin{essential}\footnote{These are often called \emph{proper separations} in the literature. We have chosen to use different terminology to avoid overloading the word proper.} if $A \neq V(G)$ and $B \neq V(G)$.
The \defin{flip} of $(A,B)$ is $(B,A)$; we write $(A,B)^{r}\coloneqq(B,A)$.  
For a set $\Sigma$ of separations, let  
$X^{r}\;\coloneqq\;\{(B,A) : (A,B)\in \Sigma\}$
be the set of their reverses.  A set $\Sigma$ of separations is called \defin{symmetric} if $\Sigma=\Sigma^r$. 
A symmetric set of essential separations is called a \defin{system of separations}, or \defin{separation system}. 
The \defin{order} of a separation $\ab$ is $|A\cap B|$.
Every set of separations is partially ordered by  
\[
  (A,B)\;\le\;(C,D) \quad\Longleftrightarrow\quad A\subseteq C\ \text{ and }\ D\subseteq B.
\]
Two separations $(A,B)$ and $(C,D)$ are \defin{nested} if $(A,B)$ is comparable with either $(C,D)$ or $(D,C)$ under this order.
A set of separations $\Sigma$ is \defin{nested} if every pair of elements of $\Sigma$ are nested. 
Two separations  are said to \defin{cross} if they are not nested.
%
%
We say that a separation system $\Sigma$ is \defin{discrete} if for all $(A_1,B_1), (A_2,B_2) \in \Sigma$, we have that the set
$$
\{(C,D) \in \Sigma : (A_1,B_1) \leq (C,D) \leq (A_2, B_2)\}
$$
is finite.
We say that $\Sigma$ is \defin{crossing-finite} (resp. \defin{crossing-bounded}) if any set of pairwise-crossing elements of $\Sigma$ is finite (and uniformly bounded).
If $\Gamma$ is a group acting on $G$, we may refer to $G$ as a \defin{$\Gamma$-graph} for brevity. We will say that a separation system $\Sigma$ is \defin{$\Gamma$-invariant} if for all $(A,B) \in \Sigma$, we have that $(g\cdot A, g \cdot B) \in \Sigma$.

We also introduce the following additional property.

\begin{defn}[Escaping chain condition]
    Let $G$ be a graph and let $\Sigma$ be a system of separations. We say that $\Sigma$ satisfies the \defin{escaping chain condition (ECC)}, and call $\Sigma$ an \defin{ECC separation system}, if for all infinite, strictly descending chains 
    $$
    (A_1, B_1) > (A_2, B_2) > \ldots,
    $$
    we have that 
    $$
    \bigcap_{n=1}^\infty (A_n \sm B_n) = \emptyset.
    $$
\end{defn}

Intuitively, this property forbids any descending chain from `trapping' a vertex. Note that discrete separation systems do not automatically satisfy the ECC, even if they are nested. 

\begin{exa}\label{exa:no-DCC-principal-filter}
    Consider separation system $\Sigma$ on the infinite star-graph $G$ with vertex set $V(G) = \{\omega, b, 1, 2 , 3, \ldots\}$, where $b$ is the base-vertex and every other vertex is a leaf. Consider the separation system $\Sigma = \{(A_n, B_n), \  (B_n, A_n) : n \geq 1\}$ where
    $$
    A_n = \{b, 1, \ldots, n\}, \ \ \ B_n = \{\omega, b, n+1, n+2, \ldots\}. 
    $$
    This is a discrete and nested separation system, but does not satisfy the ECC. 
\end{exa}


\section{Median graphs}\label{sec:median-graphs}

In this section, we introduce the basic conventions and facts surrounding median graphs.

\subsection{Median graphs and convexity}

Median graphs are generalisations of trees, being graphs with the property that any triple of vertices admits a unique `median'. This is made precise as follows.
Let $G$ be a (finite or infinite) graph.
A \defin{shortest path} or \defin{geodesic} between two vertices is a path whose length is minimal among all $(u,v)$‑paths in~$G$.
For vertices $u,v\in V(G)$ we write $d(u,v)$ for this length and call any shortest $(u,v)$‑path a \defin{$(u,v)$‑geodesic}.
The \defin{interval} between $u$ and $v$ is the set
\[
   I(u,v)\;\coloneqq\;\bigl\{x\in V(G) :  d(u,v)=d(u,x)+d(x,v)\bigr\},
\]
i.e.\ the vertices that lie on some $(u,v)$‑geodesic.  

\begin{defn}[Median graphs]
    A graph $G$ is a \defin{median graph} if, for every three distinct vertices
$u,v,w$ of a component of $G$, the triple intersection  
\[
   I(u,v,w)\;\coloneqq\;I(u,v)\cap I(v,w)\cap I(w,u)
\]
contains exactly one vertex.  
That unique vertex is called the \defin{median} of $u,v,w$.
\end{defn}

It is implicit within this definition that median graphs are connected. 
Examples include trees and cube skeleta. It is a theorem of Chepoi \cite{chepoi-cat0} and Roller \cite{roller-thesis} that every median graph is the 1-skeleton of some CAT(0) cube complex, and vice versa. In other words, all median graphs are obtained by gluing together cube graphs, following certain rules.

\begin{defn}
    Let $G$ be a graph, and $C \subset G$ a subgraph. We say that $C$ is \defin{convex} if every geodesic in $G$ with endpoints in $C$ is contained in $C$.
\end{defn}

In trees, convexity is identical to connectedness. Median graphs also exhibit a very strong convexity theory. For example, convex subgraphs of median graphs are easily seen to also be median graphs. Moreover, convex subgraphs of median graphs satisfy a form of the \emph{Helly property}. See \cite[Thm.~12.19]{hammack2011handbook} for a proof.

\begin{thm}[Helly property for median graphs]\label{thm:helly}
    Let $M$ be a median graph, and let $C_1, \ldots , C_n$ be a finite collection of convex subgraphs. If $C_i \cap C_j \neq \emptyset$ for all $i, j \in [n]$, then 
    $$
    \bigcap_{i=1}^n C_i \neq \emptyset.
    $$
\end{thm}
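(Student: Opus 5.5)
We argue by induction on $n$, using the median operation $m(u,v,w)$ as the basic tool. The key preliminary fact is the following. If $C$ is convex in a median graph $M$ and $u,v\in C$, then $m(u,v,w)\in C$ for every vertex $w$. This holds because $m(u,v,w)\in I(u,v)$, and $I(u,v)$ consists of vertices on $(u,v)$-geodesics, which all lie in $C$ by convexity.

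The cases $n\le 2$ are trivial, since the hypothesis already gives $C_1\cap C_2\neq\emptyset$.

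For $n=3$, choose $a\in C_2\cap C_3$, $b\in C_1\cap C_3$ and $c\in C_1\cap C_2$, and set $x=m(a,b,c)$. We check that $x$ lies in each $C_i$:
\begin{itemize}
\item $b,c\in C_1$, so $x\in I(b,c)\subseteq C_1$;
\item $a,c\in C_2$, so $x\in I(a,c)\subseteq C_2$;
\item $a,b\in C_3$, so $x\in I(a,b)\subseteq C_3$.
\end{itemize}
Hence $x\in C_1\cap C_2\cap C_3$.

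For $n\ge 4$, we reduce to $n-1$ sets. Replace $C_{n-1}$ and $C_n$ by $C'\coloneqq C_{n-1}\cap C_n$, which is nonempty by hypothesis and convex, since an intersection of convex subgraphs is convex. The new family $C_1,\ldots,C_{n-2},C'$ is still pairwise intersecting. The pairs among $C_1,\dots,C_{n-2}$ are unchanged, and for each $i\le n-2$ the sets $C_i,C_{n-1},C_n$ pairwise intersect, so the $n=3$ case gives $C_i\cap C'\neq\emptyset$. The induction hypothesis then yields a point in $\bigcap_{i\le n-2}C_i\cap C'=\bigcap_{i=1}^n C_i$.

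The only step that needs care is the observation that $I(u,v)\subseteq C$ whenever $u,v\in C$. This is immediate from the definition of convexity, because every vertex of $I(u,v)$ lies on some $(u,v)$-geodesic. Note that the median $m(a,b,c)$ exists and is unique because $M$ is median and connected. If $a,b,c$ are not distinct, a suitable choice works directly: for example, if $a=b$ then $a$ already lies in all three sets.
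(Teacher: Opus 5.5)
Your proof is correct. The cases $n\le 2$ are immediate. For $n=3$, the median $x=m(a,b,c)$ lies in each interval $I(b,c)$, $I(a,c)$, $I(a,b)$, and each interval is contained in the corresponding convex set. Your check of the coincident cases is worthwhile, since the paper's definition of a median graph only quantifies over three \emph{distinct} vertices. The reduction for $n\ge 4$ is also sound: $C_{n-1}\cap C_n$ is again convex, and it meets each $C_i$ by the $n=3$ case.

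The paper gives no argument of its own here; it simply cites \cite[Thm.~12.19]{hammack2011handbook}. Your proposal therefore makes the statement self-contained, using only the definitions of intervals, convexity and medians already set up in the paper. Your argument uses only that the family is finite, not that $M$ is, so it visibly covers infinite median graphs. That matters, because the paper applies the Helly property to infinite $M$, for instance in Proposition~\ref{prop:cliques-in-bags} and in the proof of Theorem~\ref{thm:proper-dual-decomp-char}.

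A common variant of the induction picks $a$, $b$, $c$ in the intersections of all sets except $C_1$, $C_2$, $C_3$ respectively, and takes their median directly. It is the same idea, and your merge-two-sets version works equally well.
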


\subsection{\texorpdfstring{$\Theta$}{Theta}-classes}

Next, we introduce one of the fundamental tools used in the study of median graphs.
The Djoković--Winkler $\Theta$-relation (often just called the $\Theta$-relation) is a binary relation on the edge set of a graph that captures when two edges are `parallel'.
We define both oriented and unoriented variants of this relation, starting with the oriented.

\begin{defn}[Oriented $\Theta$-classes]
Let $M$ be a median graph. 
For oriented edges $e= (x,y), f = (u,v) \in \vec E(M)$, define the relation
\[
e\;\Theta f
\quad\Longleftrightarrow\quad
d(x,u) + d(y,v) < d(x,v) + d(y,u).
\]
The equivalence classes of the $\Theta$-relation on $\vec E(M)$ are called \defin{oriented $\Theta$-classes}. Denote by $\vec {\mathcal H}(M)$ the set of oriented $\Theta$-classes in $M$. 
\end{defn}

For the sake of self-containment, we include a quick proof that $\Theta$ is indeed an equivalence relation. 

\begin{prop}
    Let $M$ be a median graph.  
    Then $\Theta$ is an equivalence relation on $\vec E(M)$.
\end{prop}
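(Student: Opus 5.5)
Reflexivity and symmetry are immediate from the definition, so the plan reduces transitivity to a convexity statement about half-spaces. For reflexivity, with $e=(x,y)$ we have $d(x,x)+d(y,y)=0<2=d(x,y)+d(y,x)$. For symmetry, the defining inequality for $f\,\Theta\, e$ is literally the same inequality with the terms permuted.

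For transitivity, first reformulate $\Theta$ in terms of half-spaces. A median graph is bipartite: an odd cycle of minimal length would produce a triple with no median. Hence for any edge $xy$ and any vertex $w$ we have $d(w,x)-d(w,y)=\pm1$. Set
\[
W_{xy}\coloneqq\{w\in V(M): d(w,x)<d(w,y)\},
\]
so that $V(M)=W_{xy}\sqcup W_{yx}$. For $e=(x,y)$ and $f=(u,v)$, the condition $e\,\Theta\, f$ says
\[
\bigl(d(x,u)-d(y,u)\bigr)+\bigl(d(y,v)-d(x,v)\bigr)<0.
\]
Each bracket is $\pm1$, so this happens exactly when $u\in W_{xy}$ and $v\in W_{yx}$. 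Transitivity will therefore follow from the claim that $e\,\Theta\, f$ implies $W_{uv}=W_{xy}$ (and hence $W_{vu}=W_{yx}$). Indeed, given $e\,\Theta\, f$ and $f\,\Theta\, g$ with $g=(p,q)$, we get $p\in W_{uv}=W_{xy}$ and $q\in W_{vu}=W_{yx}$, which is $e\,\Theta\, g$.

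The claim follows from convexity of the half-spaces. Suppose $W_{xy}$ and $W_{yx}$ are both convex, $u\in W_{xy}$, $v\in W_{yx}$, and $uv$ is an edge. If some $w\in W_{xy}$ had $w\in W_{vu}$, then $d(w,v)<d(w,u)$, which gives $v\in I(w,u)$. Since $w,u\in W_{xy}$, convexity forces $v\in W_{xy}$, a contradiction. Thus $W_{xy}\subseteq W_{uv}$. The symmetric argument gives $W_{yx}\subseteq W_{vu}$, and since both pairs partition $V(M)$, equality follows.

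The main obstacle is proving that each $W_{xy}$ is convex in a median graph. First note that $w\in W_{xy}$ iff $x\in I(w,y)$, iff the median of $(w,x,y)$ is $x$; the median of any triple containing both endpoints of an edge is one of those endpoints. My approach is to argue by contradiction with a minimal counterexample. Take $a,b\in W_{xy}$ and a geodesic from $a$ to $b$ leaving $W_{xy}$, chosen to minimise $d(a,b)$. Minimality lets me assume the interior lies entirely in $W_{yx}$. Along this geodesic, $d(\cdot,x)-d(\cdot,y)$ jumps from $-1$ to $+1$ at the first step and back at the last step. Consider the median $m$ of $a$, $b$ and $y$ (or of $a$, $b$ and $x$). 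Comparing the distance profiles should show that $m$ has strictly smaller distance to both $a$ and $b$ than the geodesic allows, or that $I(a,y)\cap I(b,y)\cap I(a,b)$ contains two distinct vertices, contradicting uniqueness of medians. If this direct argument becomes messy, I would instead invoke the standard quadrangle condition for median graphs and induct on $d(a,b)$. In that fallback, for $c\in W_{yx}$ adjacent to two vertices $a',b'\in W_{xy}$ at equal distance from $y$, the quadrangle condition yields a common neighbour closer to $y$, and iterating this leads to the same contradiction.
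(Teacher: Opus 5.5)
Your reduction is correct, and it takes a different route from the paper. Reflexivity, symmetry, the reformulation ``$e\,\Theta\, f$ iff $u\in W_{xy}$ and $v\in W_{yx}$'', transitivity from $W_{uv}=W_{xy}$, and $W_{uv}=W_{xy}$ from convexity of $W_{xy}$ and $W_{yx}$ are all sound. The paper encodes the same sets through $\sigma_e$, since $\sigma_e(z)=1$ iff $z\in W_{xy}$. It obtains $\sigma_e=\sigma_f$ from a median-algebra identity: in effect $z\mapsto m(z,u,v)$ is a median morphism, so when $m(x,u,v)=u$ and $m(y,u,v)=v$ one gets $m(z,u,v)=m\bigl(m(z,x,y),u,v\bigr)$. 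So both arguments prove $W_{xy}=W_{uv}$. Yours is the classical Djokovi\'c--Winkler argument and uses only bipartiteness and convexity of the $W_{xy}$, so it works in any partial cube. The paper's is shorter but imports a non-trivial identity for medians.

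The gap is that the one non-trivial ingredient, convexity of $W_{xy}$, is never proved. Your direct sketch stops at ``should show'' without locating the contradiction, and for $d(a,b)\geq 3$ some descent or induction is needed. Your fallback also points the wrong way: $c$ is already a common neighbour of $a',b'$ closer to $y$, so the quadrangle step must be taken with respect to $x$.

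One way to finish is as follows. Take $k=d(a,b)$ minimal, with interior in $W_{yx}$, and set $h(i)=d(c_i,x)$. Then $h(1)=h(0)+1$ and $h(k-1)=h(k)+1$, so $h$ is maximised at an interior $i$ with $h(i\pm1)=h(i)-1$. The vertex $t=m(x,c_{i-1},c_{i+1})$ is then a common neighbour of $c_{i-1},c_{i+1}$ with $d(t,x)=h(i)-2$.
\begin{itemize}
\item If $k\geq 3$, swapping $c_i$ for $t$ gives another bad $a$--$b$ geodesic with smaller $\sum_i h(i)$. Choosing the geodesic to minimise this sum therefore forces $k=2$.
\item If $k=2$, then $d(t,y)=d(a,y)-1=d(c_1,y)$. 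So $t\neq c_1$ both lie in $I(a,b)\cap I(a,y)\cap I(b,y)$, contradicting uniqueness of medians.
\end{itemize}
Alternatively, cite this as Mulder's convexity lemma; the paper records it as Proposition~\ref{prop:halfspaces-convex}. Or derive it from the morphism property: for $c\in I(a,b)$,
\[
m(c,x,y)=m\bigl(m(a,b,c),x,y\bigr)=m\bigl(m(a,x,y),m(b,x,y),m(c,x,y)\bigr)=m\bigl(x,x,m(c,x,y)\bigr)=x.
\]
With that last option, though, the detour through convexity is longer than the paper's direct argument.
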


\begin{proof}
We fix an oriented edge $e=(x,y)$ and define
$\sigma_e:V(M)\to \mathbb Z, \sigma_e(z):=d(z,y)-d(z,x)$.
Since $M$ is bipartite and $xy\in \vec E(M)$, for every $z$ we have $|\sigma_e(z)|=1$.
\begin{clm}
For $e=(x,y)$ and $f=(u,v)$ in $\vec E(M)$,
$e\Theta f\Longleftrightarrow\sigma_e=\sigma_f$.
\end{clm}
\begin{claimproof}
One can see that 
\[
\bigl(d(x,v)+d(y,u)\bigr)-\bigl(d(x,u)+d(y,v)\bigr)
=\bigl(d(x,v)-d(x,u)\bigr)+\bigl(d(y,u)-d(y,v)\bigr).
\]  
Because $uv\in \vec{E}(M)$, each bracket is $\pm 1$, so
\begin{align*}\tag{$\ast$}
e\Theta f
\; &\Longleftrightarrow\;
d(x,v)=d(x,u)+1 \ \text{ and }\ d(y,u)=d(y,v)+1 \\
\; &\Longleftrightarrow\;
\sigma_f(x)=1\ \text{ and }\ \sigma_f(y)=-1.
\end{align*}
For the edge $xy$, we can see that
\[
I(z,x,y)\in\{x,y\},
\qquad
I(z,x,y)=x \Longleftrightarrow \sigma_e(z)=1,
\tag{$\ast\ast$}
\]
since $m(z,x,y)=x$ iff $x\in I(z,y)$ iff $d(z,y)=d(z,x)+1$.
Assume $e\Theta f$. Then by $(\ast)$ we have $\sigma_f(x)=1$ and $\sigma_f(y)=-1$, hence
$I(x,u,v)=u,I(y,u,v)=v$.
Using the standard median identity, we have
\[
I(z,u,v)=I\bigl(I(z,x,y),\,I(x,u,v),\,I(y,u,v)\bigr),
\]
we obtain $I(z,u,v)=I\bigl(I(z,x,y),u,v\bigr)$.
Therefore $I(z,u,v)=u$ iff $I(z,x,y)=x$, and by $(\ast\ast)$ this is equivalent to
$\sigma_f(z)=1$ iff $\sigma_e(z)=1$. Since $\sigma_e(z),\sigma_f(z)\in\{\pm 1\}$, it follows that
$\sigma_f(z)=\sigma_e(z)$ for all $z$, i.e. $\sigma_f=\sigma_e$.
Conversely, if $\sigma_e=\sigma_f$, then in particular $\sigma_f(x)=\sigma_e(x)=1$ and
$\sigma_f(y)=\sigma_e(y)=-1$, so $(\ast)$ implies $e\Theta f$.
\end{claimproof}

This proves the claim. Since $\Theta$ is equality of the maps $\sigma_e$, it is reflexive,
symmetric, and transitive, hence an equivalence relation on $\vec E(M)$.
\end{proof}

Given $\mathfrak h \in \vec {\mathcal H}(M)$, we denote by $\mathfrak h^r$ its \defin{reversal}. That is, 
$
\mathfrak h^r = \{(y,x) : (x,y) \in \mathfrak h\}
$. 
It is immediate from the definition of $\Theta$ that $\mathfrak h^r$ is also an oriented $\Theta$-class. We can also define unoriented $\Theta$-classes, slightly abusing notation and overloading the $\Theta$-relation. 

\begin{defn}[Unoriented $\Theta$-classes]
    Let $M$ be a median graph. Let $e =xy, f = uv \in E(M)$ be (unoriented) edges. Define the equivalence relation
\[
e\;\Theta f
\quad\Longleftrightarrow\quad
d(x,u) + d(y,v) \neq d(x,v) + d(y,u).
\]
The equivalences classes of $\Theta$ on $E(M)$ are called \defin{unoriented $\Theta$-classes}. Denote by $\mathcal H(M)$ the set of unoriented $\Theta$-classes of $M$. 
\end{defn}

Note that given unoriented edges $xy, uv\in E(M)$, we have that the following:
$$xy \Theta uv \textit{ if and only if } (x,y) \Theta (u,v) \textit{ or } (x,y) \Theta (v,u)$$ 
We leave it as an exercise to the reader to verify that this also defines an equivalence relation on $E(M)$ with a natural 2-1 map $\vec{\mathcal H}(M) \to \mathcal H(M)$, descended from the orientation-forgetting map $\vec E(M) \to E(M)$. Throughout this paper, we may simply refer to \emph{$\Theta$-classes}, without mention to whether they are oriented or unoriented, provided this is clear from context.
It is easy to see that if a group acts on a median graph, then this action preserves the $\Theta$-classes.

An important property of $\Theta$-classes is that each $\Theta$-class is a tight cut with very nice geometric properties.
More precisely, we have the following well-known statement; see \cite[\S11.2--12.1]{hammack2011handbook} for proofs.

\begin{prop}\label{prop:halfspaces-convex}
    Let $M$ be a median graph, and $\mathfrak h = [(u,v)] \in \vec {\mathcal H}(M)$. Then $M \sm \mathfrak h$ contains exactly two connected components, which are the subgraphs induced by 
    $$
    \mathfrak h_+ := \{w \in V(M) : d(w,u) < d(w,v)\}, \ \ \mathfrak h_- := \{w \in V(M) : d(w,u) > d(w,v)\}.
    $$
    Furthermore, the complementary components $M[\mathfrak h_\pm]$ are convex subgraphs of $M$.
\end{prop}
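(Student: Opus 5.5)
Fix $\mathfrak h = [(u,v)]$ and set $\sigma := \sigma_{(u,v)}$, so that $\sigma(w) = d(w,v)-d(w,u) \in \{\pm 1\}$ for every $w$, as in the proof that $\Theta$ is an equivalence relation. Then $\mathfrak h_+ = \sigma^{-1}(1)$ and $\mathfrak h_- = \sigma^{-1}(-1)$, so $V(M) = \mathfrak h_+ \sqcup \mathfrak h_-$, with $u \in \mathfrak h_+$ and $v \in \mathfrak h_-$. The proof will establish three things: (i) the edges joining $\mathfrak h_+$ to $\mathfrak h_-$ are exactly the (unoriented) edges of $\mathfrak h$; (ii) both sides are convex; (iii) both sides induce connected subgraphs. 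Together these give that $M\sm\mathfrak h$ has exactly the two components $M[\mathfrak h_\pm]$, both convex.

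For (i), take an edge $xy$ with $x\in\mathfrak h_+$ and $y \in \mathfrak h_-$. Then $\sigma(x)=1$ and $\sigma(y)=-1$, and by $(\ast)$ in the earlier proof (with $f=(u,v)$) this is exactly the statement $(x,y)\Theta(u,v)$. Conversely, if $(x,y)\,\Theta\,(u,v)$, then $\sigma_{(x,y)}=\sigma$ by the Claim. Evaluating at $x$ and $y$ gives $\sigma(x) = \sigma_{(x,y)}(x) = 1$ and $\sigma(y) = -1$, so the edge crosses between the sides. Every other edge therefore has both endpoints on the same side.

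For (ii), the key step is that $\mathfrak h_+$ is convex; I expect this to be the main obstacle. Using $(\ast\ast)$, we have $w\in\mathfrak h_+$ iff $I(w,u,v)=u$. Take $a,b\in\mathfrak h_+$ and $z\in I(a,b)$. A standard fact in median graphs is that $z\in I(a,b)$ iff $z = I(a,b,z)$. Using this together with the identity already quoted in the earlier proof, $I(z,u,v)=I(I(z,a,b),I(a,u,v),I(b,u,v))$ in the appropriate form, I aim to compute
$$I(z,u,v)=I\bigl(I(a,b,z),u,u\bigr)=u,$$
and hence $z\in \mathfrak h_+$. Concretely, the median map $w\mapsto I(w,u,v)$ onto the edge $\{u,v\}$ is a retraction satisfying $I(I(a,b,z),u,v)=I(I(a,u,v),I(b,u,v),I(z,u,v))$, a consequence of the median identities; when $z\in I(a,b)$ the left side is $I(z,u,v)$ and the right side is $I(u,u,\cdot)=u$. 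If this identity is not conveniently available, the fallback is a direct metric argument. Suppose a geodesic from $a$ to $b$ passes through some $z$ with $\sigma(z)=-1$. Since $\sigma$ changes only across edges of $\mathfrak h$, the geodesic crosses $\mathfrak h$ twice, say along $(x,y)$ and later $(y',x')$. Both are $\Theta$-equivalent to $(u,v)$ up to orientation, and the inequality defining $\Theta$, applied to these two edges, contradicts the additivity of distances along the geodesic. The argument for $\mathfrak h_-$ is symmetric.

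For (iii), connectedness follows from convexity. Since $M$ is connected, any $a,b\in\mathfrak h_+$ are joined by a geodesic, and by convexity that geodesic lies in $\mathfrak h_+$. Both sides are nonempty because they contain $u$ and $v$ respectively. Combined with (i), deleting the edges of $\mathfrak h$ leaves exactly the two components $M[\mathfrak h_\pm]$, which are convex by (ii).
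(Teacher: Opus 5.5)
Your proof is correct, and it is more self-contained than the paper. The paper gives no argument for this proposition. It only cites the Handbook of Product Graphs, where the result comes out of partial-cube theory: Djokovi\'c's and Winkler's characterisations of partial cubes, plus the fact that median graphs are partial cubes. Your step (i) is immediate from $(\ast)$. Your ``fallback'' for (ii) is in effect a direct proof of the implication ``bipartite and $\Theta$ transitive $\Rightarrow$ both sides of a $\Theta$-cut are convex''. So your route turns the proposition into a few lines on top of the paper's own proof that $\Theta$ is an equivalence relation, and that is what it buys over the citation.

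Do spell out the key step. From $(x,y)\,\Theta\,(u,v)$ and $(x',y')\,\Theta\,(u,v)$, transitivity gives $(x,y)\,\Theta\,(x',y')$. Then, with $x=w_i$, $y=w_{i+1}$, $y'=w_j$, $x'=w_{j+1}$ along the geodesic, $d(x,x')+d(y,y')=2(j-i)=d(x,y')+d(y,x')$, contradicting the strict inequality defining $\Theta$. Transitivity is exactly where the median property enters. In the bipartite graph $K_{2,3}$ the same leave-and-return pattern occurs along a geodesic, $\Theta$ fails to be transitive, and $\mathfrak h_+$ is not convex.

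I would make this the main argument and drop the median-identity route. Its ``concrete'' form is valid, because $w\mapsto I(w,u,v)$ is a median homomorphism. But that is a nontrivial median-algebra fact you would need to cite, and it is no more elementary than the statement you are proving.

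Worse, the identity in the form you copied from the earlier proof is false in your setting. With $I(a,u,v)=I(b,u,v)=u$, the formula $I(z,u,v)=I\bigl(I(z,a,b),I(a,u,v),I(b,u,v)\bigr)$ would give $I(z,u,v)=u$ for \emph{every} $z$, which fails at $z=v$. What you actually need is the homomorphism identity $I\bigl(I(a,b,z),u,v\bigr)=I\bigl(I(a,u,v),I(b,u,v),I(z,u,v)\bigr)$, combined with $I(a,b,z)=z$ for $z\in I(a,b)$.
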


The subgraphs $M[\mathfrak h_\pm]$ are called \defin{halfspaces}. We may sometimes abuse notation and refer to $\mathfrak h_+$ itself as a halfspace. This notation for $\mathfrak h_\pm$ will follow us throughout this paper. Note that \Cref{prop:halfspaces-convex} implies that a geodesic in $M$ crosses any given $\Theta$-class at most once. 

Given non-empty subsets $S, S' \subset V(M)$, we say $\mathfrak h \in \vec {\mathcal H}(M)$ \defin{separates} $S$ and $S'$ if $S \subset \mathfrak h_{\pm}$ and $S' \subset \mathfrak h_\mp$. The separation properties of the $\Theta$-classes can be used to completely describe the metric of a median graph, and distances between convex subgraphs. This is summarised by the following quantitative form of the `Kakutani separation property' satisfied by median graphs. For a proof, combine \cite[Thm.~2.14]{haglund2023isometries} and \cite[Prop.~2.7]{genevois2021coning}. 

\begin{thm}[Kakutani separation property]\label{thm:kakutani}
    Let $M$ be a median graph. Let $C, C' \subset M$ be convex subgraphs. Then
    $$
    d(C, C') = \Big|\{\mathfrak h \in \mathcal H(M) : \text{$\mathfrak h$ separates $C$ and $C'$}\}\Big|.
    $$
    In particular, disjoint convex subgraphs are separated by some $\Theta$-class. 
\end{thm}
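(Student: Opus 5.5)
The plan is to reduce the statement to a pair of vertices realising $d(C,C')$, and then to show that every $\Theta$-class crossed by a geodesic between them separates all of $C$ from all of $C'$. If $C \cap C' \neq \emptyset$, then both sides are zero: no $\Theta$-class can have a common vertex on both of its sides. So assume $C$ and $C'$ are disjoint. Distances are non-negative integers, so $d(C,C')$ is attained by some pair $u \in C$, $v \in C'$ with $d(u,v) = d(C,C')$.

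\emph{Step 1: the vertex case.} I first show that $d(u,v)$ equals the number of (unoriented) $\Theta$-classes separating $u$ and $v$. Fix a $(u,v)$-geodesic $P$.
\begin{itemize}
\item By \Cref{prop:halfspaces-convex}, a geodesic crosses each $\Theta$-class at most once. Hence the $d(u,v)$ edges of $P$ lie in pairwise distinct $\Theta$-classes.
\item Each class crossed by $P$ is crossed exactly once, so it separates $u$ from $v$.
\item Conversely, suppose $\mathfrak h$ separates $u$ and $v$. The halfspaces $\mathfrak h_\pm$ are the two components of $M \sm \mathfrak h$, and $P$ is a connected path from one to the other. So $P$ must use an edge of $\mathfrak h$.
\end{itemize}
Together these give $d(u,v) = |\{\mathfrak h : \mathfrak h \text{ separates } u, v\}|$.

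\emph{Step 2: the easy inequality.} Any class separating $C$ and $C'$ in particular separates $u$ and $v$. By Step 1, $d(C,C') = d(u,v)$ is at least the number of classes separating $C$ and $C'$.

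\emph{Step 3: the gate property (main step).} I show that $u \in I(c,v)$ for every $c \in C$.
\begin{itemize}
\item Let $m$ be the median of $c, u, v$.
\item Since $m \in I(c,u)$ and $C$ is convex, $m \in C$.
\item Since $m \in I(u,v)$, we have $d(m,v) = d(u,v) - d(u,m)$.
\item Minimality of $d(u,v)$ over $C$ gives $d(m,v) \geq d(u,v)$, which forces $m = u$.
\item Therefore $u = m \in I(c,v)$.
\end{itemize}
Symmetrically, $v \in I(c',u)$ for every $c' \in C'$.

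\emph{Step 4: every crossed class separates $C$ from $C'$.} Let $\mathfrak h$ separate $u$ and $v$, say $u \in \mathfrak h_+$ and $v \in \mathfrak h_-$.
\begin{itemize}
\item Suppose some $c \in C$ lies in $\mathfrak h_-$. Then $c, v \in \mathfrak h_-$, and $\mathfrak h_-$ is convex by \Cref{prop:halfspaces-convex}, so $I(c,v) \subset \mathfrak h_-$.
\item By Step 3, $u \in I(c,v)$, so $u \in \mathfrak h_-$, a contradiction.
\item Hence $C \subset \mathfrak h_+$, and likewise $C' \subset \mathfrak h_-$.
\end{itemize}
So every class separating $u$ and $v$ separates $C$ and $C'$. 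Combined with Steps 1 and 2, this yields the equality.

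The final assertion then follows. If $C$ and $C'$ are disjoint, then $d(C,C') \geq 1$, so the set of separating classes is non-empty.

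I expect Step 3 to be the main obstacle. It is the only place where the median structure is used beyond the convexity of halfspaces. It needs care to confirm that the median lies in $C$, which relies on convexity of $C$ applied to the interval $I(c,u)$.
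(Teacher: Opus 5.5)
Your proof is correct. The paper gives no argument of its own for this statement. It refers the reader to a combination of \cite[Thm.~2.14]{haglund2023isometries} and \cite[Prop.~2.7]{genevois2021coning}, which are phrased for hyperplanes of CAT(0) cube complexes. Applying them here therefore implicitly goes through the Chepoi--Roller correspondence.

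Your argument replaces those citations with a self-contained proof in the median-graph language of the paper. It uses only \Cref{prop:halfspaces-convex} together with the existence of medians:
\begin{itemize}
\item Step~1 gives the vertex-to-vertex count.
\item Steps~3--4 upgrade this to convex subgraphs. They show that a closest pair $(u,v)$ consists of gates, so every $\Theta$-class crossed by a $(u,v)$-geodesic already separates $C$ from $C'$.
\end{itemize}

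You use a few facts tacitly, and none causes difficulty:
\begin{itemize}
\item $V(M)=\mathfrak h_+\sqcup\mathfrak h_-$.
\item The edges of $\mathfrak h$ are exactly those joining $\mathfrak h_+$ to $\mathfrak h_-$. This is the tight-cut property recorded just before \Cref{prop:halfspaces-convex}.
\item The degenerate case $c=u$ in Step~3. The paper only defines medians of distinct triples, but in that case $u\in I(c,v)$ holds trivially.
\end{itemize}

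Your route buys a short proof that never leaves the paper's framework. The citation buys brevity and places the result in its standard cube-complex context.
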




In trees, $\Theta$-classes correspond to edges. In particular, they naturally ordered. In a general median graph, a key difference is that $\Theta$-classes are allowed to `cross'.

\begin{defn}[Crossing and parallel $\Theta$-classes]
    Let $M$ be a median graph. Let $\mathfrak h, \mathfrak h' \in \vec {\mathcal H}(M)$. We say that $\mathfrak h$ \defin{crosses} $\mathfrak{h}'$ if
    $$
    \mathfrak h_+ \cap \mathfrak h' _+, \ \ \mathfrak h_+ \cap \mathfrak h' _-,   \ \ \mathfrak h_- \cap \mathfrak h' _+,   \ \ \mathfrak h_- \cap \mathfrak h' _-
    $$
    are all non-empty. If $\mathfrak h$ and $\mathfrak h'$ do not cross, we say they are \defin{parallel}. 
\end{defn}

Of course, it also makes sense to refer to unoriented $\Theta$-classes as crossing or parallel, and we will do so at whim. Note that there is a natural partial ordering (in fact, a pocset structure; see \Cref{sec:sageev}) on $\vec {\mathcal H}(M)$, given by 
$$
\mathfrak h \leq \mathfrak h' \Leftrightarrow \mathfrak h_+ \subseteq \mathfrak h_+'.
$$
In this language, two oriented $\Theta$-classes $\mathfrak h$, $\mathfrak h'$ cross exactly when $\mathfrak h'$ is incomparable with both $\mathfrak h$ and $\mathfrak h^r$ within this partial ordering.

We conclude this section by introducing one final piece of terminology.

\begin{defn}
    A median graph $M$ is said to be \defin{crossing-finite} (resp. \defin{crossing-bounded}) if all sets of pairwise-crossing $\Theta$-classes of $M$ are finite (and uniformly bounded). 
\end{defn}

Recall that every median graphs can be canonically decomposed into cube graphs (being the 1-skeleton of a CAT(0) cube complex). The crossing-finite hypothesis essentially asserts that there is no `infinite cube' in this decomposition. The crossing-bounded property puts a uniform bound on the dimension of a cube appearing in this decomposition.

\begin{rem}\label{rem:dimension}
    If $M$ is a median graph, then the maximum cardinality of any set of pairwise crossing-$\Theta$-classes is sometimes referred to as the \defin{(cubical) dimension} of $M$ in the literature, denoted $\dim(M)$. As such, crossing-bounded median graphs are often called \defin{finite-dimensional}.  
\end{rem}


\section{Median decompositions and their induced separations}\label{sec:med-decomp}

We now address the concept of a \emph{median decomposition} of a graph, adopting the notation and definitions from Stavropoulos \cite{stavropoulos2015medianwidth,stavropoulos2016graph}.

\subsection{Median decompositions}

We have the following definition.

\begin{defn}[Median decompositions]
A \defin{median decomposition} of a graph $G$ is a pair
$(M,\beta)$, where $M$ is a median graph, and $\beta\colon V(M)\to \mathcal P\bigl(V(G)\bigr)$ assigns to every node  $u\in V(M)$ a \defin{bag} $\beta(t)\subseteq V(G)$
such that
\begin{enumerate}[label=(M\arabic*)]
\item\label{m1} $\displaystyle \bigcup_{u\in V(M)} \beta(u) \;=\; V(G)$;
\item\label{m2} for every edge $xy\in E(G)$ there exists
      $u\in V(M)$ with $\{x,y\}\subseteq \beta(u)$;
\item\label{m3} for every $v\in V(G)$ the set
      $\beta^{-1}(v)\coloneqq\{u\in V(M) :  v\in \beta(u)\}$
      induces a convex subgraph of $M$.
\end{enumerate}

The sets $\beta^{-1}(v)$ are called the \defin{fibres} of $(M,\beta)$.
Given an edge $st\in E(M)$, the intersection
$\beta(s)\cap \beta(t)$ is called an \defin{adhesion set} of the decomposition.
The sets $ \{\beta(u) :  u\in V(M)\} $ for every $ u \in V(M) $ are called the \defin{bags} of $ (M, \beta) $, and the induced subgraphs $ G[\beta(u)] $ are the \defin{parts} of $ (M, \beta) $. 

The \defin{width} of $(M,\beta)$ is defined as as $\mathrm{width}(M,\beta) = \sup_{u \in V(M)} |\beta(u)|$. 
Then, the \defin{median-width}\footnote{When defining \emph{tree-width}, one normally takes $\mathrm{tw}(G) = \inf \mathrm{width}(T,\beta) - 1$. The point of this convention is to ensure that trees are exactly the graphs of tree-width 1. As remarked by Stavropoulos in \cite{stavropoulos2015medianwidth}, this convention does not have the same effect for median graphs, as any triangle-free graph admits a median decomposition of width 2; see \Cref{thm:cliquebags}. As such, the `$-1$' convention is abandoned.} 
of $ G $, denoted by $ \mathrm{mw}(G) $,
is defined as:
\[
\mathrm{mw}(G) = \inf \mathrm{width}(M,\beta),
\]
where the infimum is taken over all median-decompositions $ (M,\beta) $ of $ G $.

Suppose $G$ is equipped with an action by a group $\Gamma$. 
Then $(M,\beta)$ is called \defin{$\Gamma$-canonical} if $\Gamma$ acts on $M$ such that $g\cdot \beta(v)=\beta(g\cdot v)$, for every $v \in V(M)$, $g \in \Gamma$.
\end{defn}

We invite the reader to compare this definition with the usual definition of a tree decomposition. It is essentially the same, with the exception of \ref{m3}. For tree decompositions, one usually asks that fibres be \emph{connected}. However, recall that the convexity and connectivity coincide for subgraphs of trees. In particular, taking $M$ to be a tree in the above definition precisely recovers the usual definition of a tree decomposition. 

\begin{exa}\label{exa:example-nonexample}
    Consider the path graph $G$ of length 3, with vertices $V(G) = \{1,2,3,4\}$. In~\Cref{fig:example-nonexample}, two decompositions of this graph are depicted. Only one is a well-formed median decomposition. 
    \begin{figure}[h]
        \centering
        \subfloat[\centering The graph $G$.]{{\includegraphics[scale=0.5]{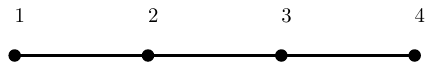}}}
        \qquad
        \subfloat[\centering Example of a median decomposition.]{{\includegraphics[scale=0.5]{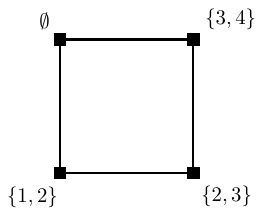} }}%
        \qquad
        \subfloat[\centering  Non-example.]{{\includegraphics[scale=0.5]{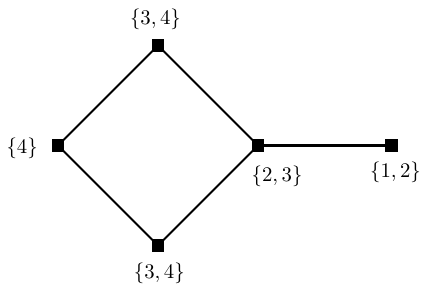} }}%
        \caption{}
        \label{fig:example-nonexample}
    \end{figure}
    
\end{exa}

\subsection{Separations induced by \texorpdfstring{$\Theta$}{Theta}-classes}

It is well-known that, given a tree decomposition $(T, \beta)$,  oriented edges of $T$ give rise to separations of $G$. This runs as follows. Each oriented edge $e \in \vec E(T)$ induces an edge-cut $E[U,W]$ of $T$. Then, the separation of $G$ induced by $e$ can be defined to be 
$$
(A_e, B_e) := \Bigg( \bigcup_{s\in U}\beta(s), \bigcup_{t\in W}\beta(t)\Bigg).
$$
Repeating this for every oriented edge in $T$ gives rise to a symmetric set of separations $\Sigma_{T,\beta}$, which morally encodes the same structural data as $(T,\beta)$, though less organised. 

In this section, we remark that the same phenomenon makes sense in median decompositions, via the $\Theta$-classes of the median graph. These separations, in turn, serve as labels for the $\Theta$-classes. 

\begin{defn}
    Let $G$ be a graph and $(M, \beta)$ be a median decomposition of $G$. Let $\mathfrak h \in \vec {\mathcal H}(M)$ be an oriented $\Theta$-class. Let $E[\mathfrak h_+, \mathfrak h_-]$ be the corresponding edge-cut in $M$. Then the \defin{separation induced by $\mathfrak h$}, denoted $(A_{\mathfrak h}, B_{\mathfrak h})$, is defined to be
    $$
    A_{\mathfrak h} \coloneqq \bigcup_{{u}\in \mathfrak h_+}\beta(u), 
    \qquad
    B_{\mathfrak h} \coloneqq \bigcup_{v\in \mathfrak h_-}\beta(v).
    $$
    The \defin{set of separations dual to $(M,\beta)$} is then defined as
    $$
    \Sigma_{M,\beta} = \{(A_{\mathfrak h}, B_{\mathfrak h}) : \mathfrak h \in \vec {\mathcal H}(M)\}. 
    $$
\end{defn}

Given a median decomposition $(M,\beta)$, it is easy to see that the dual set of separations $\Sigma_{M,\beta}$ is symmetric. However, it certainly need not be the case that every $(A_\mathfrak h, B_\mathfrak h)$ be an essential separation. In particular, $\Sigma_{M,\beta}$ is not necessarily a separation system.
If $\Sigma_{M,\beta}$ \emph{is} a separation system, we refer to it as the \defin{dual separation system}. 
We immediately record the following easy observation, as it is quite useful.

\begin{lem}\label{lem:fibre-halfspace-char}
    Let $G$ be a graph and $(M, \beta)$ a median decomposition. Let $x \in V(G)$ and $\mathfrak h \in \vec {\mathcal H}(M)$. Then $x \in A_\mathfrak h$ if and only if $\beta^{-1}(x) \cap \mathfrak h_+ \neq \emptyset$. 
\end{lem}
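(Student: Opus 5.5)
This is a direct unwinding of the definition of $A_{\mathfrak h}$; no earlier result beyond the definitions is needed. By definition,
$$
A_{\mathfrak h} = \bigcup_{u\in \mathfrak h_+}\beta(u).
$$
So $x \in A_{\mathfrak h}$ holds exactly when there is some node $u \in \mathfrak h_+$ with $x \in \beta(u)$.

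Next I rewrite the condition $x \in \beta(u)$ using the fibre. By the definition of the fibre $\beta^{-1}(x) = \{u \in V(M) : x \in \beta(u)\}$, we have $x \in \beta(u)$ if and only if $u \in \beta^{-1}(x)$. Hence $x \in A_{\mathfrak h}$ if and only if there exists $u \in \mathfrak h_+ \cap \beta^{-1}(x)$, which is the statement that $\beta^{-1}(x) \cap \mathfrak h_+ \neq \emptyset$.

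Both directions are the same chain of equivalences, so the proof is a single line. Convexity of fibres (\ref{m3}) and \Cref{prop:halfspaces-convex} are not needed. The only point needing care is the convention that $\mathfrak h_+$ denotes the vertex set of the halfspace, so that the union over $u \in \mathfrak h_+$ ranges over exactly those nodes. There is no real obstacle; the symmetric statement for $B_{\mathfrak h}$ and $\mathfrak h_-$ follows identically.
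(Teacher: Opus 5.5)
Your proof is correct and is the intended argument. The paper records this lemma as an immediate observation with no written proof, and it follows exactly as you show, by unwinding $A_{\mathfrak h} = \bigcup_{u \in \mathfrak h_+} \beta(u)$ against the definition of the fibre $\beta^{-1}(x)$; no convexity or median structure is needed.
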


The following describes a link between $\Sigma_{M,\beta}$ and $\beta$. In particular, it shows that $\beta$ is completely determined by the assignment $\vec {\mathcal H}(M) \to \Sigma_{M,\beta}$, $\mathfrak h \mapsto (A_{\mathfrak h}, B_{\mathfrak h})$. 

\begin{prop}\label{prop:bags-are-intersections}
    Let $G$ be a graph and $(M, \beta)$ be a median decomposition of $G$ with dual set of separations $\Sigma = \Sigma_{M,\beta}$. Let $v \in V(M)$, and let $S_v \subset \Sigma$ denote the set 
    $$
    S_v = \{(A_\mathfrak h, B_\mathfrak h) : \text{$\mathfrak h \in \vec {\mathcal H}(M)$, $v \in \mathfrak h_+$}\}.
    $$
    Then
    $$
    \beta(v) = \bigcap_{(A,B) \in S_v} A.
    $$
\end{prop}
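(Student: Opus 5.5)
We prove the two inclusions separately. For $\beta(v) \subseteq \bigcap_{(A,B)\in S_v} A$ there is almost nothing to do. If $\mathfrak h \in \vec{\mathcal H}(M)$ satisfies $v \in \mathfrak h_+$, then by definition $A_{\mathfrak h} = \bigcup_{u \in \mathfrak h_+}\beta(u) \supseteq \beta(v)$. Note that $S_v$ is nonempty whenever $M$ has at least one edge. In the degenerate case where $M$ is a single vertex, the intersection over the empty family should be read as $V(G)$, which agrees with $\beta(v)$ by \ref{m1}. We treat that case separately.

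For the reverse inclusion, take $x \in V(G)$ lying in $A_{\mathfrak h}$ for every oriented $\Theta$-class $\mathfrak h$ with $v \in \mathfrak h_+$. We must show $v \in \beta^{-1}(x)$. By \Cref{lem:fibre-halfspace-char}, the hypothesis says exactly that the fibre $F := \beta^{-1}(x)$ meets every halfspace $\mathfrak h_+$ containing $v$. By \ref{m1}, $F$ is nonempty, and by \ref{m3}, $F$ is convex. Suppose for contradiction that $v \notin F$. Then $\{v\}$ and $F$ are disjoint convex subgraphs of $M$. The Kakutani separation property (\Cref{thm:kakutani}) then gives an unoriented $\Theta$-class separating them. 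Orient it as $\mathfrak h$ so that $v \in \mathfrak h_+$ and $F \subseteq \mathfrak h_-$. Then $F \cap \mathfrak h_+ = \emptyset$ while $v \in \mathfrak h_+$, which contradicts the hypothesis. Hence $x \in \beta(v)$.

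No step here is a real obstacle. The only point needing care is that \Cref{thm:kakutani} is applied to a possibly infinite convex set $F$. This is fine, since the theorem is stated for arbitrary convex subgraphs and we only need its qualitative consequence: if $d(\{v\},F)\ge 1$, the set of separating $\Theta$-classes is nonempty. One could avoid the theorem by a direct argument instead. Take a nearest point $p \in F$ to $v$, let $e$ be the first edge of a geodesic from $p$ to $v$, and use convexity of $F$ and the median property to show that the $\Theta$-class of $e$ separates $v$ from $F$. Citing \Cref{thm:kakutani} is cleaner, so we use it.
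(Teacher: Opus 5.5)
Your proof is correct and follows the paper's argument: the forward inclusion is immediate from the definition of $A_{\mathfrak h}$. For the reverse inclusion, the paper likewise notes that $\beta^{-1}(x)$ meets every halfspace $\mathfrak h_+$ containing $v$, then applies the Kakutani separation property to separate $v$ from the convex fibre, reaching the same contradiction. Your extra remarks (the empty-intersection convention when $M$ is a single vertex, nonemptiness of the fibre via (M1), and the direct nearest-point argument replacing Kakutani) are valid refinements that the paper leaves implicit.
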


\begin{proof}
    Write $Z = \bigcap_{(A,B) \in S_v} A$. It is immediate from the definitions of $S_v$ and $\Sigma_{M,\beta}$ that $\beta(v) \subset Z$. Conversely, let $y \in Z$. Then for every $\mathfrak h \in \vec {\mathcal H}(M)$ such that $v \in \mathfrak h_+$, we have that  $\beta^{-1}(y)$ intersects $\mathfrak h_+$. Suppose that $v \not\in \beta^{-1}(y)$, then by the Kakutani separation property (\ref{thm:kakutani}) there exists $\mathfrak h \in \vec {\mathcal H}(M)$ such that $v \in \mathfrak h _+$ and $\beta^{-1}(y) \subset \mathfrak h_-$, which is a contradiction. 
    Thus, $v \in \beta^{-1}(y)$, and so $y \in \beta(v)$. Since $y$ was arbitrary, it follows that $Z \subset \beta(v)$. 
\end{proof}

In \Cref{sec:dual-meddecomp}, we will describe the reverse of this construction. That is, given a sufficiently sensible separation system $\Sigma$, how to construct a median decomposition $(M,\beta)$ such that $\Sigma_{M,\beta} = \Sigma$. 

\subsection{Reduced and crossing-faithful median decompositions}

Recall that a tree decomposition is usually said to be \defin{reduced} if no bag is contained in another bag. This definition does not immediately port over to median decompositions. Indeed, many perfectly sensible median decompositions feature empty bags.
Instead, the `correct' definition of a reduced median decomposition makes use of the dual set of separations, as follows.

\begin{defn}[Reduced median decomposition]\label{def:reduced}
    Let $G$ be a graph and $(M,\beta)$ be a median decomposition. We say that $(M,\beta)$ is \defin{reduced} if the following hold:
    \begin{enumerate}[label=(Red\arabic*)]
        \item\label{red1} Every induced separation $(A_\mathfrak h, B_\mathfrak h)$ is essential. 

        \item\label{red2} The natural map $\vec {\mathcal H}(M) \to \Sigma$, $\mathfrak h \mapsto (A_{\mathfrak h}, B_{\mathfrak h})$ is injective.
    \end{enumerate}
    A median decomposition which satisfies \ref{red1} but not necessarily \ref{red2} is called \defin{weakly reduced}.\footnote{What we call \emph{weakly reduced} has previously been referred to as \emph{regular} in the literature, for example in \cite{jacobs2025canonical}.} 
\end{defn}

\begin{rem}
     We remark that \Cref{def:reduced} recovers the usual notion of a reduced tree decomposition when the median graph $M$ is a tree. 
\end{rem}

 It is immediate that the dual set of separations $\Sigma_{M,\beta}$ is a separation system (i.e. contains only essential separations) if and only if $(M,\beta)$ is weakly reduced. This can also be characterised as follows.

\begin{prop}\label{prop:weakly-reduced-fibre-char}
    Let $G$ be a graph and $(M,\beta)$ be a median decomposition. Then $(M,\beta)$ is weakly reduced if and only if there does not exist a convex proper subgraph $C \subset M$, such that for all $x \in V(G)$, $\beta^{-1}(x) \cap V(C) \neq \emptyset$.  
\end{prop}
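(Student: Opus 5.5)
The plan is to prove both directions through the correspondence in \Cref{lem:fibre-halfspace-char}: for an oriented $\Theta$-class $\mathfrak h$, we have $A_{\mathfrak h} = V(G)$ exactly when every fibre $\beta^{-1}(x)$ meets $\mathfrak h_+$. Recall that $(M,\beta)$ is weakly reduced precisely when every induced separation is essential. That is, for no $\mathfrak h\in\vec{\mathcal H}(M)$ do we have $A_{\mathfrak h} = V(G)$ or $B_{\mathfrak h} = V(G)$. Since $B_{\mathfrak h} = A_{\mathfrak h^r}$ and $\vec{\mathcal H}(M)$ is closed under reversal, it suffices to rule out $A_{\mathfrak h} = V(G)$ for all $\mathfrak h$. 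Throughout, I take ``proper'' to mean $C \neq M$, and I assume $G$ is non-empty. Then any $C$ meeting every fibre is automatically non-empty.

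($\Leftarrow$, contrapositive.) Suppose $(M,\beta)$ is not weakly reduced. Then there is some $\mathfrak h$ with $A_{\mathfrak h} = V(G)$, after reversing $\mathfrak h$ if necessary. Take $C = M[\mathfrak h_+]$. By \Cref{prop:halfspaces-convex}, $C$ is a convex subgraph. It is proper because $\mathfrak h_-$ is non-empty: it contains an endpoint of any edge in $\mathfrak h$. By \Cref{lem:fibre-halfspace-char}, every $x \in V(G) = A_{\mathfrak h}$ has $\beta^{-1}(x)\cap \mathfrak h_+ \neq \emptyset$. So $C$ is a convex proper subgraph meeting every fibre.

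($\Rightarrow$, contrapositive.) Suppose $C \subsetneq M$ is convex and meets every fibre. Pick a vertex $v \in V(M)\setminus V(C)$, which exists because $C$ is proper. If $C$ omits only edges and contains all vertices, then convexity forces $C$ to contain every geodesic between its vertices, hence every edge, so $C = M$. Thus such a $v$ exists. Both $C$ and $\{v\}$ are convex and disjoint. By the Kakutani separation property (\Cref{thm:kakutani}) some $\Theta$-class separates them, so after choosing orientation there is $\mathfrak h$ with $V(C)\subseteq \mathfrak h_+$ and $v\in\mathfrak h_-$. Every fibre then meets $\mathfrak h_+$, so by \Cref{lem:fibre-halfspace-char} we get $A_{\mathfrak h} = V(G)$. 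Hence $(A_{\mathfrak h},B_{\mathfrak h})$ is not essential, and $(M,\beta)$ is not weakly reduced.

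There is no serious obstacle here. The only points needing care are bookkeeping: using orientation reversal to reduce to the $A$-side only, and checking that a proper convex subgraph genuinely misses a vertex so that Kakutani applies to two non-empty disjoint convex sets.
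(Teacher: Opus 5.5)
Your proof is correct and follows essentially the same route as the paper: the halfspace $M[\mathfrak h_+]$ of a $\Theta$-class with $A_{\mathfrak h} = V(G)$ is the required convex subgraph in one direction, and Kakutani separation of $C$ from a vertex outside it yields an $\mathfrak h$ with $A_{\mathfrak h} = V(G)$ in the other. Your extra bookkeeping fills in details the paper leaves implicit: assuming $G$ is non-empty, noting that convex subgraphs are induced so a proper one misses a vertex, and reducing to the $A$-side via reversal.
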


\begin{proof}
    Suppose $(M, \beta)$ is not weakly reduced, so there exists $\mathfrak h \in \vec {\mathcal H}(M)$ such that $A_\mathfrak h = V(G)$. In particular, we may take $C = M[\mathfrak h_+]$. Conversely, suppose such a $C \subset M$ exists. Let $v \in V(M) \sm V(C)$. By \Cref{thm:kakutani}, there exists $\mathfrak h \in \vec {\mathcal H}(M)$ such that $v \in \mathfrak h_-$ and $V(C) \subset \mathfrak h _+$. In particular, since every fibre intersects $V(C)$, we have that $A_\mathfrak h = V(G)$. This implies the proposition.
\end{proof}

\begin{rem}\label{rem:weak-reduction}
    Given a median decomposition $(M,\beta)$ which is not weakly reduced, one can iteratively `reduce' this decomposition by collapsing $\Theta$-classes which induce inessential separations. More precisely, let $(M, \beta)$ be a median decomposition of a graph $G$, and let $\Sigma = \Sigma_{M,\beta}$. We define
    $$
    \Sigma^{\mathrm{ess}} := \{((A,B) \in \Sigma : \text{$(A,B)$ is essential}\},
    $$
    and say that $\mathfrak h \in \vec {\mathcal H}(M)$ is \defin{essential in $(M,\beta)$} if $(A_\mathfrak h, B_{\mathfrak h})$ is essential. We then define a new graph $\widehat M = M/\sim$ as a quotient graph of $M$, where $u \sim v$ if and only if every $\Theta$-class separating $u$ and $v$ is not essential. It is well-known that $\widehat M$ is also a median graph, obtained from $M$ by `collapsing' the inessential $\Theta$-classes. We can then define a new median decomposition $(\widehat M, \widehat \beta)$ via
    $$
    \widehat \beta([v]) := \bigcup_{u \in [v]} \beta(u).
    $$
    This defines a weakly reduced median decomposition with dual separation system exactly $\Sigma^{\mathrm{ess}}$, called the \defin{weak reduction of $(M,\beta)$}. 
    One can apply similar reasoning to `fully reduce' a median decomposition, by also collapsing $\Theta$-classes labelled by redundant induced separations, though this requires certain choices to be made as to which $\Theta$-classes to collapse, and as such is less canonical.
\end{rem}

There is another sense in that a median decompositions can be somehow `degenerate', which doesn't apply for tree decompositions. Before we describe this, we first state record the following proposition, which describes how nesting and crossing of $\Theta$-classes informs the structure of the dual set of separations.
\begin{prop}\label{prop:nest-cross-induced-seps}
    Let $G$ be a graph. Let $(M,\beta)$ be a median decomposition, and fix $\mathfrak h, \mathfrak h' \in \vec{\mathcal H}(M)$. 
    \begin{enumerate}
        \item If $\mathfrak h_+ \subset \mathfrak h'_+$, then  $(A_{\mathfrak h}, B_{\mathfrak h}) \leq (A_{\mathfrak h'}, B_{\mathfrak h'})$. In particular, if $M$ is crossing-finite (crossing-bounded), then $\Sigma_{M,\beta}$ is crossing-finite (crossing-bounded).

        \item Suppose further that $(M, \beta)$ is weakly reduced. If $(A_{\mathfrak h}, B_{\mathfrak h}) < (A_{\mathfrak h'}, B_{\mathfrak h'})$, then either $\mathfrak h$ and $\mathfrak h'$ cross, or $\mathfrak h_+ \subset \mathfrak h_+'$. 
    \end{enumerate}
\end{prop}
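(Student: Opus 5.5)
Both parts should follow directly from the definitions of the induced separations together with the fact that parallel $\Theta$-classes have nested halfspaces. Throughout we use the observation $(A_{\mathfrak h^r}, B_{\mathfrak h^r}) = (B_{\mathfrak h}, A_{\mathfrak h})$, which holds because $\mathfrak h^r_\pm = \mathfrak h_\mp$.

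For (1), suppose $\mathfrak h_+ \subset \mathfrak h'_+$. Taking complements in $V(M)$ gives $\mathfrak h'_- \subset \mathfrak h_-$. Since $A_{\mathfrak h}$ is a union of bags over $\mathfrak h_+$, enlarging the index set can only enlarge the union, so $A_{\mathfrak h} \subseteq A_{\mathfrak h'}$. By the same reasoning, $B_{\mathfrak h'} \subseteq B_{\mathfrak h}$, which is exactly $(A_{\mathfrak h}, B_{\mathfrak h}) \le (A_{\mathfrak h'}, B_{\mathfrak h'})$.

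For the crossing statement, first note that if $\mathfrak h, \mathfrak h'$ are parallel, then one of the four quadrant intersections is empty. This means one of $\mathfrak h_\pm$ is contained in one of $\mathfrak h'_\pm$. Applying the first part to the appropriate pair among $\mathfrak h, \mathfrak h^r, \mathfrak h', \mathfrak h'^r$, the induced separations are nested. Now take a family of pairwise-crossing separations in $\Sigma_{M,\beta}$ and choose one oriented $\Theta$-class inducing each. These choices are distinct, since they induce distinct separations, and they pairwise cross by the contrapositive just shown. They are also distinct as unoriented classes, because $\mathfrak h$ and $\mathfrak h^r$ induce nested separations. So any bound on pairwise-crossing families of $\Theta$-classes in $M$ transfers to $\Sigma_{M,\beta}$.

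For (2), assume $(M,\beta)$ is weakly reduced, $(A_{\mathfrak h}, B_{\mathfrak h}) < (A_{\mathfrak h'}, B_{\mathfrak h'})$, and $\mathfrak h, \mathfrak h'$ are parallel. Parallelism means one of the following four inclusions holds, and we rule out all but the first:
\begin{itemize}
\item $\mathfrak h_+ \subset \mathfrak h'_+$. This is the desired conclusion.
\item $\mathfrak h_- \subset \mathfrak h'_-$, equivalently $\mathfrak h'_+ \subset \mathfrak h_+$. Part (1) gives $(A_{\mathfrak h'}, B_{\mathfrak h'}) \le (A_{\mathfrak h}, B_{\mathfrak h})$, which contradicts strictness by antisymmetry of $\le$.
\item $\mathfrak h_+ \subset \mathfrak h'_-$. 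Then $\mathfrak h'_+ \subset \mathfrak h_-$, so $A_{\mathfrak h'} \subseteq B_{\mathfrak h}$. Combining with the hypothesis $A_{\mathfrak h} \subseteq A_{\mathfrak h'}$ yields $A_{\mathfrak h} \subseteq B_{\mathfrak h}$. Hence $B_{\mathfrak h} = A_{\mathfrak h} \cup B_{\mathfrak h} = V(G)$, contradicting \ref{red1}.
\item $\mathfrak h_- \subset \mathfrak h'_+$. Then $\mathfrak h'_- \subset \mathfrak h_+$, so $B_{\mathfrak h'} \subseteq A_{\mathfrak h}$. Combining with $A_{\mathfrak h} \subseteq A_{\mathfrak h'}$ gives $B_{\mathfrak h'} \subseteq A_{\mathfrak h'}$. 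Hence $A_{\mathfrak h'} = V(G)$, again contradicting \ref{red1}.
\end{itemize}

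The only point needing care is the case bookkeeping in (2): correctly translating each parallel configuration into an inclusion of halfspaces, and noticing that the two "mixed" cases are exactly where weak reducedness is used.
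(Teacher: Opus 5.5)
Your proof is correct and follows the paper's argument. Part (1) goes by monotonicity of the unions of bags, and part (2) uses the same four-way case split on how parallel $\Theta$-classes nest, with the two mixed cases excluded by weak reducedness exactly as in the paper (you write them as direct set inclusions rather than chaining $(A,B)\le(B',A')<(B,A)$). You also give an explicit justification of the crossing-finite/crossing-bounded transfer, which the paper only asserts as immediate. It rests on two observations: parallel classes induce nested separations, and $\mathfrak h,\mathfrak h^r$ cannot both occur in a crossing family.
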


\begin{proof}
    To ease notation,  write $(A,B) = (A_{\mathfrak h}, B_{\mathfrak h})$, $(A', B') = (A_{\mathfrak h'}, B_{\mathfrak h'})$.
    We have that (1) follows immediately from the construction of $\Sigma_{M,\beta}$. To see (2), suppose that $(A, B) < (A', B')$. Assume without loss of generality that $\mathfrak h$ and $\mathfrak h'$ are nested. This leaves us with four possibilities four how they nested, three of which we must rule out.

    First, suppose that $\mathfrak h'_+ \subset \mathfrak h_+$. Then, by (1), we have that $(A', B') \leq (A, B) < (A', B')$, which is clearly nonsense. 
    Suppose instead that $\mathfrak h_+ \subset \mathfrak h'_-$. Then $(A,B) \leq (B',A') < (B,A)$, and so $B = V(G)$ which contradicts the assumption that $(M,\beta)$ is weakly reduced.
    Finally, suppose that $\mathfrak h'_- \subset \mathfrak h_+$. Then $(B',A') \leq (A,B) < (A',B')$, and so $A' = V(G)$, which is also a contradiction.
    We deduce by process of elimination that $\mathfrak h_+ \subset \mathfrak h_+'$. This completes the proof of (2).
\end{proof}

Note that it is possible for two crossing $\Theta$-classes to induce nested separations, as can be seen in the following example.

\begin{exa}\label{exa:not-crossing-faithful}
    Let $G$ denote the path graph of length 3, with vertex set $V(G) = \{1,2,3, 4\}$. We return to the example of a median decomposition $(M,\beta)$ of $G$ given in \Cref{exa:example-nonexample}.  We have that $M$ contains exactly two unoriented $\Theta$-classes, which cross. The dual separations are labelled in~\Cref{fig:crossing-failure}. Note that their associated separations are distinct and nested. 
 \begin{figure}[h]
        \centering
        \includegraphics[scale=0.7]{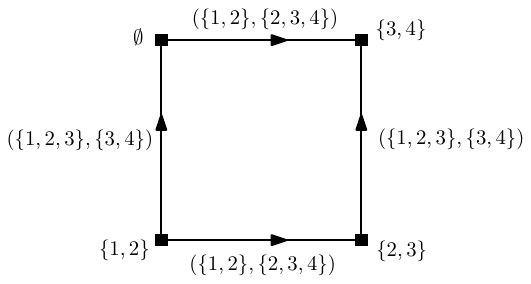}
        \caption{The median-decomposition of $P_4$}
        \label{fig:crossing-failure}
    \end{figure}\end{exa}

This motivates the following definition. 

\begin{defn}\label{def:crossing-faithful}
    Let $G$ be a graph and $(M,\beta)$ be a median decomposition. Then $(M,\beta)$ is said to be \defin{crossing-faithful} if, for all pairs $\mathfrak h, \mathfrak h' \in \vec{\mathcal H}(M)$ of crossing $\Theta$-classes, we have that the induced separations $(A_{\mathfrak h}, B_{\mathfrak h})$ and $(A_{\mathfrak h'}, B_{\mathfrak h'})$ also cross. 
\end{defn}

We will see in \Cref{sec:duality} that a reduced, crossing-faithful median decomposition is completely determined by its dual system of separations.

We conclude this section with the following proposition, which describes how median decompositions often give rise to `nice' dual sets of separations.

\begin{prop}\label{prop:weakly-reduced-crossing-finite-gives-nice-seps}
    Let $G$ be a graph and $(M,\beta)$ be a weakly reduced median decomposition. Let $\Sigma = \Sigma_{M,\beta}$ be the dual system of separations. Suppose $M$ is crossing-finite, or $(M,\beta)$ is crossing-faithful. Then $\Sigma$ is discrete and satisfies the ECC. 
\end{prop}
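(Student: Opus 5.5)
The plan is to reduce both properties to the Kakutani separation property (\Cref{thm:kakutani}) applied to two fibres of $(M,\beta)$. Fibres are nonempty by \ref{m1} and convex by \ref{m3}. Since $M$ is connected, the distance between two nonempty convex subgraphs is finite. Hence only finitely many $\Theta$-classes separate any two given fibres. \Cref{lem:fibre-halfspace-char} translates membership in an induced separation into a statement about where fibres lie relative to halfspaces. Concretely, if $x \in A_\mathfrak h \sm B_\mathfrak h$, then $\beta^{-1}(x) \subset \mathfrak h_+$, because $x \notin B_\mathfrak h$ means the fibre misses $\mathfrak h_-$. Weak reducedness, i.e.\ essentiality of every induced separation, is what supplies such vertices.

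\emph{Discreteness.} Fix $(A_1,B_1) \le (A_2,B_2)$ in $\Sigma$. Since $(A_1,B_1)$ is essential, $B_1 \neq V(G)$, so pick $x \in V(G) \sm B_1$. Since $A_2 \neq V(G)$, pick $y \in V(G) \sm A_2$. Now let $(C,D) = (A_\mathfrak h, B_\mathfrak h)$ lie between them.
\begin{itemize}
\item From $D \subseteq B_1$ we get $x \notin D$. Since $C \cup D = V(G)$, this gives $x \in C \sm D$, so $\beta^{-1}(x) \subset \mathfrak h_+$.
\item From $C \subseteq A_2$ we get $y \notin C$, so $y \in D \sm C$ and $\beta^{-1}(y) \subset \mathfrak h_-$.
\end{itemize}
Thus every such $\mathfrak h$ separates the two fixed convex fibres $\beta^{-1}(x)$ and $\beta^{-1}(y)$. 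By \Cref{thm:kakutani} there are only finitely many such $\Theta$-classes. Since each separation in the interval is the image of at least one of them, the interval is finite.

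\emph{ECC.} Let $(A_1,B_1) > (A_2,B_2) > \cdots$ be a strictly descending chain in $\Sigma$, with $(A_n,B_n) = (A_{\mathfrak h_n}, B_{\mathfrak h_n})$. Suppose for contradiction that some $x$ lies in every $A_n \sm B_n$, so $\beta^{-1}(x) \subset (\mathfrak h_n)_+$ for all $n$. Since $(A_1,B_1)$ is essential, pick $y \notin A_1$. Then $y \notin A_n$ for all $n$, because $A_n \subseteq A_1$, and hence $\beta^{-1}(y) \subset (\mathfrak h_n)_-$. The separations $(A_n,B_n)$ are pairwise distinct, so the $\mathfrak h_n$ are pairwise distinct. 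This produces infinitely many $\Theta$-classes separating $\beta^{-1}(x)$ from $\beta^{-1}(y)$, contradicting \Cref{thm:kakutani}.

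\emph{Expected obstacle.} The only delicate step is the bookkeeping that places whole fibres, not merely single nodes, inside halfspaces. This uses $C \cup D = V(G)$ together with \Cref{lem:fibre-halfspace-char}. It is also where weak reducedness enters, guaranteeing that the witnesses $x$ and $y$ exist. On this plan the alternative hypotheses (crossing-finite, or crossing-faithful) appear not to be needed. If a gap surfaces, my fallback for the crossing-faithful case would be \Cref{prop:nest-cross-induced-seps}(2): it forces $\mathfrak h_+ \subset \mathfrak h'_+$ whenever the induced separations are strictly comparable. In the crossing-finite case I would bound the number of crossing exceptions instead.
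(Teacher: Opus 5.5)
Your proof is correct, and it takes a different and more economical route than the paper. The paper argues through the $\Theta$-classes themselves:
\begin{itemize}
\item For ECC, it uses Proposition~\ref{prop:nest-cross-induced-seps}(2) and Ramsey's theorem. These extract from the chain either an infinite descending chain of halfspaces or an infinite family of pairwise crossing $\Theta$-classes. The first has empty intersection, contradicting that $\beta^{-1}(x)$ is nonempty. Crossing-finiteness or crossing-faithfulness is used only to exclude the second.
\item For discreteness, in the crossing-faithful case it applies Kakutani to the halfspaces $\mathfrak h_+$ and $\mathfrak h'_-$, which are disjoint because crossing-faithfulness forces nesting. In the crossing-finite case it uses Ramsey again together with the ECC just established.
\end{itemize}

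You instead use essentiality to choose a second witness vertex ($y\notin A_1$, resp.\ $x\notin B_1$ and $y\notin A_2$). Every relevant $\Theta$-class then separates two fixed nonempty fibres. In a connected median graph only finitely many classes can do this: at most $d(c,c')$ for any $c,c'$ in the two fibres, so convexity of fibres is not even needed. This treats nested and crossing families of $\Theta$-classes uniformly, so you need neither Ramsey nor any crossing hypothesis. The one detail worth writing out is that the distinct oriented classes $\mathfrak h_n$ all have $\beta^{-1}(x)$ on their positive side. They are therefore distinct as unoriented classes, which is what Kakutani counts.

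Your remark that the hypothesis ``crossing-finite or crossing-faithful'' is superfluous is correct. It follows that no weakly reduced decomposition can have the non-ECC system of Example~\ref{exa:dual-seps-not-ECC} as its dual. Indeed the decomposition given there is not weakly reduced. For $n\ge 1$, the bags $\beta(\mathbf e_n+\mathbf e_m)$ (with $\mathbf e_k$ the standard basis vectors) show that the $\Theta$-class of coordinate $n$ induces the inessential separation $(V(G),V(G)\setminus\{n\})$. The paper's route does show that consecutive classes in the chain either cross or nest, but the statement does not need this.
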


\begin{proof}
    Let us suppose for the sake of a contradiction that $\Sigma = \Sigma_{M,\beta}$ does not satisfy the ECC. Let 
    $$
    x \in \bigcap_{n=1}^\infty (A_n \sm B_n),
    $$
    where $(A_1, B_1) > (A_2, B_2) > \ldots$ is an infinite, strictly descending chain. 
    For each $i \geq 1$, fix $\mathfrak h^i \in \vec {\mathcal H}(M)$ such that $(A_i, B_i) = (A_{\mathfrak h ^i}, B_{\mathfrak h ^i})$. 
    Since $(M,\beta)$ is weakly reduced, by \Cref{prop:nest-cross-induced-seps} we have that the only possible relationships between $\mathfrak h^{i}$ and $\mathfrak h^{i+1}$ is either that these two $\Theta$-classes cross, or $\mathfrak h^i_+ \supset \mathfrak h^{i+1}_+$. 
    
    By~\Cref{lem:fibre-halfspace-char}, we have that $\beta^{-1}(x) \subset \mathfrak h^i_+$ for every $i \geq 1$. 
    By Ramsey's theorem, $S = \{\mathfrak h^i_+\}_i$ contains either an infinite chain or an infinite anti-chain. If $S$ contains an infinite chain, then $\bigcap_{i} \mathfrak h^i_+ = \emptyset$, and so $\beta^{-1}(x) = \emptyset$, which cannot happen by \ref{m1}. 
    If $M$ is crossing-finite, or $(M,\beta)$ is crossing-faithful, then $S$ cannot contain an infinite anti-chain. It follows that $\Sigma$ satisfies the ECC.

    We now show that $\Sigma$ is discrete. Suppose not, so there exists separations $(A,B) < (A', B')$ such that the interval
    $$
    I := \{(C,D) \in \Sigma : (A_1, B_1) < (C,D) < (A_2, B_2)\}
    $$
    is infinite. Fix $\mathfrak h, \mathfrak h'$ such that $(A,B) = (A_{\mathfrak h}, B_{\mathfrak h})$, $(A', B') = (A_{\mathfrak h'}, B_{\mathfrak h'})$. If $(M, \beta)$ is crossing-faithful, then we immediately deduce that $\mathfrak h_+$ and $\mathfrak h'_-$ are disjoint, and there exists infinitely many $\mathfrak h''$ such that $\mathfrak h_+ \subset \mathfrak h''_+$ and $\mathfrak h'_- \subset \mathfrak h_-''$. By \Cref{thm:kakutani}, this means that $\mathfrak h_+$ is infinitely far from $\mathfrak h'_-$, a contradiction.
    Suppose instead that $M$ is crossing-finite.
    By Ramsey's theorem, $I$ contains either an infinite chain or an infinite anti-chain. Note that we cannot have that $(D,C) \leq (C',D')$ for any $(C,D), (C',D') \in I$, as this would imply that $C' \supset D'$, meaning $(C',D')$ is not proper. Therefore, an anti-chain would consist of crossing elements. 
    By \Cref{prop:nest-cross-induced-seps}, since $M$ is crossing-finite we have that $\Sigma$ is crossing-finite, and so $I$ contains an infinite ascending or descending chain, say $(A_1, B_1) > (A_2, B_2) > \ldots$ without loss of generality (an identical argument applies when the chain is ascending). 
    Note that $A \subset \bigcap_i A_i$, and $B \supset \bigcup_i B_i$. This implies that $A \sm B \subset \bigcap_i (A_i \sm B_i)$. But, since $\Sigma$ is known to satisfy the ECC, the right-hand side is empty. In particular, $A \subset B$, which means that $(A,B)$ is not an essential separation. This contradicts the assumption that $(M,\beta)$ is weakly reduced, and we are done.
\end{proof}

\begin{rem}
    If $(M,\beta)$ is a median decomposition which is not weakly reduced, but either $M$ is crossing-finite or $(M,\beta)$ is crossing-faithful, then by passing to the weak reduction $(\widehat M,\widehat \beta)$ described in \Cref{rem:weak-reduction}, we deduce that the essential subset $\Sigma^{\mathrm{ess}}_{M,\beta}$ is a discrete, ECC system of separations, even if $\Sigma_{M,\beta}$ itself is poorly behaved.
\end{rem}

The following example illustrates the additional hypothesis cannot be dropped from \Cref{prop:weakly-reduced-crossing-finite-gives-nice-seps}.

\begin{exa}\label{exa:dual-seps-not-ECC}
    Let $G$ be the infinite star graph with vertices $V(G) = \{b, 1,2,3, \ldots, \omega\}$, where $b \in V(G)$ is the central base vertex and every other vertex is a leaf. We will consider a separation system of $G$ similar to that which appears in \Cref{exa:no-DCC-principal-filter}. 
    That is, let $\Sigma = \{(A_n, B_n), \  (B_n, A_n) : n \geq 1\} \cup \{(A_\omega, B_\omega), (B_\omega, A_\omega)\}$  where
    \begin{align*}
        A_n &= \{b, 1, \ldots, n\}, \ \  \ \  \ B_n = \{\omega, b, n+1, n+2, \ldots\} \\
        A_\omega &= \{b, 1, 2, \ldots\}, \ \ \ \ \ \ B_\omega = \{\omega, b\}.
    \end{align*}
    
    Consider the following median decomposition $(M,\beta)$ of $G$.
    The median graph $M$ is the infinite cube, identified with the standard Cayley graph of the group $\Gamma = \bigoplus_{n \geq 0} \mathbb Z_2$. That is, the Cayley graph associated with the generating set consisting of standard unit vectors. The bag-map $\beta$ is defined as follows:
    \begin{enumerate}
        \item The base-vertex $b$ appears in every bag.

        \item The distinguished vertex $\omega \in V(G)$  appears in $\beta(\mathbf v)$ if and only if $\mathbf v$ contains a 1 in the $0$-th position. 

        \item The vertex $n \in V(G)$ appears in $\beta(\mathbf v)$ if and only if $\mathbf v$ contains a 1 in the $n$-th position. 
    \end{enumerate}
    It is easy to see verify that $(M,\beta)$ is a reduced median decomposition, and that $\Sigma_{M,\beta}$ is exactly $\Sigma$ described above. In particular, it does not satisfy the ECC, nor is it discrete.
\end{exa}




\section{Sageev's construction}\label{sec:sageev}

In this section, we recount Sageev's construction of the dual median graph. This is all fairly classical, though proofs are included for the sake of completeness and exposition.

\subsection{Pocsets and ultrafilters}

We first set up some basic terminology. 
A \defin{pocset} is a set $P$ equipped with a partial ordering $\leq$ and an order reversing involution $s \mapsto s^r$, such that  $s \not\leq s^r$ for all $s \in P$. 
The main example of interest to this paper is that if $G$ is a graph and $\Sigma$ is a separation system, then $\Sigma$ is naturally a pocset. Another important example is, given a median graph $M$, the set $\vec {\mathcal H}(M)$ of oriented $\Theta$-classes admits a natural pocset structure.

 We say that $s_1, s_2 \in P$ \defin{cross} if 
 $$
 s_1 \not\leq s_2, \ \ s_2 \not\leq s_1, \ \ s_1 \not\leq s_2^r, \ \ \text{and} \ \ s_2^r \not\leq s_1.
 $$ 
 The pocset $P$ is said to be \defin{crossing-finite} if every anti-chain in $P$ is finite. 
 We say that $P$ is \defin{discrete} if for all $s, s' \in P$, the set $\{t \in \Sigma : s \leq t \leq s'\}
$ is finite. Note that this agrees with every earlier use of the terminology `crossing-finite' and `discrete' in this paper.

\begin{defn}[Ultrafilters]
Let $P$ be a pocset.
An \defin{ultrafilter} on $P$ is a subset $\mathcal{U} \subset P$ that satisfies the following:
\begin{enumerate}[label=$(\mathrm{U}\arabic*)$]
\item\label{p1} For every
 separation $s \in P$, exactly one of $s$ or $s^r$ lies in $\mathcal U$.
\item\label{p2}  If $s\in\mathcal{U}$ and $s\le t$, then $t\in\mathcal{U}$.
\end{enumerate}
We say that $\mathcal U$ satisfies the \defin{descending chain condition} if every descending chain in $\mathcal U$ is finite, and call $\mathcal U$ a \defin{DCC ultrafilter}. 
\end{defn}

\begin{rem}\label{rem:ultrafilters-on-hyperplanes}
    It is helpful to observe that, given a median graph $M$, there is a natural bijection
    $$
    V(M) \ \ \longleftrightarrow \ \ \{\text{DCC ultrafilters on $\vec {\mathcal H}(M)$}\}.
    $$
    via the map $v \mapsto \{\mathfrak h : v \in \mathfrak h_+\}$. Also, given a DCC ultrafilter $\mathcal U \subset \vec {\mathcal H}(M)$, it is easily checked that
    $$
    \Big|\bigcap_{\mathfrak h \in \mathcal U} \mathfrak h_+ \Big| = 1,
    $$
    giving the other direction.
\end{rem}

We now record three basic lemmas regarding ultrafilters. 

\begin{lem}\label{lem:flipping-elements}
    Let $P$ be a pocset. 
    Let $\mathcal{U}$ be a ultrafilter on $ P $, $ s \in\mathcal{U}$.  
    Define
    \[
    \mathcal{U}'\;\coloneqq\;\bigl(\mathcal{U}\sm\{ s \}\bigr)\cup\{s^r\}.
    \]
    Then $\mathcal{U}'$ is a ultrafilter if and only if $ s $ is $\le$‑minimal in $\mathcal{U}$. 
\end{lem}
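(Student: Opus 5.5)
We verify the two ultrafilter axioms for $\mathcal U'$ directly. Axiom \ref{p1} is automatic: $\mathcal U'$ is obtained by swapping $s$ for $s^r$, and for any $t \notin \{s, s^r\}$ membership of $t$ and $t^r$ is unchanged. So the whole question is whether $\mathcal U'$ is upward closed \ref{p2}, and the plan is to show this holds exactly when $s$ is $\le$-minimal in $\mathcal U$.

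For the forward direction, suppose $\mathcal U'$ is an ultrafilter and some $t \in \mathcal U$ satisfies $t < s$. Then $t \neq s$. Also $t \neq s^r$, since $s^r \notin \mathcal U$. Hence $t \in \mathcal U'$, and by \ref{p2} for $\mathcal U'$ we get $s \in \mathcal U'$. But $\mathcal U'$ contains $s^r$ and not $s$, a contradiction. So $s$ is minimal.

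For the converse, assume $s$ is minimal in $\mathcal U$, and take $a \in \mathcal U'$ and $b$ with $a \le b$; we must show $b \in \mathcal U'$. There are two cases.
\begin{enumerate}
\item Suppose $a \neq s^r$, so $a \in \mathcal U \sm \{s\}$. Then $b \in \mathcal U$, and it suffices to rule out $b = s$. If $b = s$, then $a \le s$ with $a \neq s$, so $a < s$ with $a \in \mathcal U$, contradicting minimality.
\item Suppose $a = s^r$, so $s^r \le b$. If $b = s^r$ we are done. Otherwise, suppose $b \notin \mathcal U'$. Since $b \neq s^r$, we have $b \notin \mathcal U$, so $b^r \in \mathcal U$ by \ref{p1}. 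Applying the involution to $s^r \le b$ gives $b^r \le s$. If $b^r \neq s$, this contradicts minimality of $s$. If $b^r = s$, then $b = s^r$, which we excluded.
\end{enumerate}
Hence $b \in \mathcal U'$ in every case.

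The only delicate point is the bookkeeping in the second case, where order reversal turns the condition into a statement about elements below $s$. The pocset axiom $s \not\le s^r$ is not even needed here.
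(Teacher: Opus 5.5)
Your structure matches the paper's proof: the forward direction is identical, and the converse splits into the same two cases according to whether the smaller element is $s^r$. However, Case~2 of the converse has a genuine gap. You pass from ``$b\notin\mathcal U'$ and $b\neq s^r$'' to ``$b\notin\mathcal U$''. Since $\mathcal U'=(\mathcal U\sm\{s\})\cup\{s^r\}$, the hypothesis $b\notin\mathcal U'$ only gives $b\notin\mathcal U$ \emph{or} $b=s$, and you never address the second possibility. If $b=s$, then $s^r\le s$. Minimality of $s$ does not exclude this, because $s^r\notin\mathcal U$, and nothing else in your argument does either. What excludes it is the pocset axiom applied to the element $s^r$, namely $s^r\not\le (s^r)^r=s$. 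This is the line in the paper's proof asserting $t'\neq s$, ``as otherwise we get the inequality $s^r<s$, which is not allowed in a pocset''.

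Your closing remark is therefore wrong: the axiom is indispensable, and the lemma fails without it. Take $P=\{s,s^r\}$ with $s^r<s$ and the involution swapping the two elements. This is a poset with an order-reversing involution, violating the axiom only at $s^r$. Its unique ultrafilter is $\mathcal U=\{s\}$, in which $s$ is trivially minimal, yet $\mathcal U'=\{s^r\}$ is not upward closed. To repair the proof, insert ``if $b=s$ then $s^r\le s$, contradicting the pocset axiom for $s^r$'' before concluding $b\notin\mathcal U$. Everything else in your argument is correct.
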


\begin{proof}
    
    First, assume $\mathcal{U}'$ is an ultrafilter but $s$ is not $\le$‑minimal in $\mathcal{U}$.  
Choose $ t \in\mathcal{U}\sm\{s\}$ with $ t <s$.  
Because $ t $ was not removed, we still have $ t \in\mathcal{U}'$.  
Since $\mathcal{U}'$ satisfies property~\ref{p2}, the inequality $  t <s $ forces $s\in\mathcal{U}'$, contradicting $ s \notin\mathcal{U}'$.  
Hence $ s $ must be $\le$‑minimal.

For the backward implication, suppose $ s $ is $\le$‑minimal in $\mathcal{U}$. 
We now show that $\mathcal{U}'$ defined as above is an ultrafilter. Indeed, it is immediate from the construction of $\mathcal U'$ that $\mathcal U'$ satisfies \ref{p1}.
We now verify \ref{p2}.
Let $ t \in\mathcal{U}'$ and $ t' \in P $ with $ t < t' $.  
We distinguish two cases.
Suppose first that $ t \neq s^r $.  
        Then $ t \in\mathcal{U}\sm\{ s \}$, so $ t' \in\mathcal{U}$ by \ref{p2} for $\mathcal{U}$.  
        If $ t' \neq s $, we have $ t' \in\mathcal{U}'$ directly.  
        The equality $ t' = s $ is impossible because $ t <  t' $ would violate the minimality of $ s $.
Suppose instead then that $ t = s^r $.  
        Since $ s^r <  t' $, taking inverses gives $t'^r<  s $  
        Because $ s $ is minimal, $t'^r\notin\mathcal{U}$, forcing $ t' \in\mathcal{U}$.  
        We must have that $ t' \neq s $, as otherwise we get the inequality $s^r < s$, which is not allowed in a pocset. It follows that $ t' \in\mathcal{U}'$.
Thus $\mathcal{U}'$ satisfies both \ref{p1} and \ref{p2}, and is an ultrafilter on $ P $. 
\end{proof}

\begin{lem}
    Let $P$ be a pocset. 
    Let $\mathcal U$, $\mathcal V$ be ultrafilters on $P$ such that $\mathcal U \triangle \mathcal V$ is finite. Then $\mathcal U$ is DCC if and only if $\mathcal V$ is DCC.
\end{lem}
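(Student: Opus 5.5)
By symmetry it suffices to show: if $\mathcal U$ is DCC, then $\mathcal V$ is DCC. Write $F = \mathcal V \sm \mathcal U$. Since $F \subset \mathcal U \triangle \mathcal V$, the set $F$ is finite. Note also that for each $s \in F$ we have $s^r \in \mathcal U$, by \ref{p1} applied to $\mathcal U$, though we will not need this.

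Suppose for a contradiction that $\mathcal V$ contains an infinite strictly descending chain
$$
s_1 > s_2 > s_3 > \ldots, \qquad s_i \in \mathcal V.
$$
The elements of a strictly descending chain are pairwise distinct. Since $F$ is finite, only finitely many $s_i$ lie in $F$. Hence there is some $N$ such that $s_i \in \mathcal V \sm F = \mathcal V \cap \mathcal U$ for all $i \ge N$.

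The tail $s_N > s_{N+1} > \ldots$ is then an infinite strictly descending chain contained in $\mathcal U$. This contradicts the assumption that $\mathcal U$ is DCC. Swapping the roles of $\mathcal U$ and $\mathcal V$ gives the converse.

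There is no real obstacle here. The only point needing care is that an infinite descending chain may meet $F$ only finitely often, which holds because strict chains have distinct terms. Notice that \ref{p2} is not even used. We could alternatively argue by induction on $|\mathcal U \triangle \mathcal V|$ using \Cref{lem:flipping-elements}, but the direct argument above is shorter.
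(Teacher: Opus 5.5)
Your proof is correct and follows the same approach as the paper, which simply observes that because $\mathcal U \triangle \mathcal V$ is finite, an infinite descending chain in one ultrafilter yields one in the other. You spell out the step the paper leaves implicit: a strict chain has pairwise distinct terms, so all but finitely many of them lie in $\mathcal U \cap \mathcal V$.
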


\begin{proof}
    Since $\mathcal U \triangle \mathcal V$ is finite, an infinite descending chain in $\mathcal U$ would obviously yield one in $\mathcal V$, and vice versa. 
\end{proof}

\begin{lem}\label{lem:symmetric-diff}
Let $P$ be a discrete, crossing-finite pocset. 
Let $\Ucal$, $\mathcal V$ be DCC ultrafilters on $P$.
Then $\Ucal \triangle \Vcal$ is finite. Moreover, every $\le$-minimal element in $\Ucal \sm \Vcal $ is also $\le$-minimal in $\Ucal$. 
\end{lem}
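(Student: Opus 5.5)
I will prove the second assertion first, since it is purely order-theoretic, and then use it together with discreteness, crossing-finiteness and the DCC to get finiteness of $\Ucal \triangle \Vcal$.

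\textbf{Minimal elements.} Let $s$ be $\le$-minimal in $\Ucal \sm \Vcal$, and suppose for contradiction that some $t \in \Ucal$ satisfies $t < s$. If $t \notin \Vcal$, then $t \in \Ucal \sm \Vcal$ with $t<s$, contradicting minimality. So $t \in \Vcal$. Then \ref{p2} for $\Vcal$ gives $s \in \Vcal$, which is also a contradiction. Hence $s$ is minimal in $\Ucal$.

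\textbf{Finiteness.} Suppose instead that $\Ucal \sm \Vcal$ is infinite (the case $\Vcal\sm\Ucal$ is symmetric, and $\Ucal\triangle\Vcal$ is the union of the two). I would argue in the following order.
\begin{enumerate}
\item Since $\Ucal$ is DCC, every element of $\Ucal\sm\Vcal$ lies above some element minimal in $\Ucal\sm\Vcal$. Indeed, a descending chain inside $\Ucal\sm\Vcal$ is a descending chain in $\Ucal$, hence finite.
\item Two distinct minimal elements $s,s'$ of $\Ucal\sm\Vcal$ must cross. They are incomparable by minimality. If $s \le s'^r$, then $s' \le s^r$; but $s^r \in \Vcal$, so \ref{p2} gives $s'\in\Vcal$, a contradiction. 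If $s'^r\le s$, then $s'^r\in\Ucal$ by \ref{p2}, contradicting \ref{p1} since $s'\in\Ucal$.
\item By crossing-finiteness, the set $\min(\Ucal\sm\Vcal)$ of minimal elements is therefore finite.
\item Hence some minimal element $s$ lies below infinitely many $t \in \Ucal\sm\Vcal$. By Ramsey, these $t$ contain either an infinite antichain or an infinite chain. An antichain is again pairwise crossing: if $t\le t'^r$, then $t'^r\in\Ucal$, which is impossible. So crossing-finiteness excludes it, and we have an infinite chain.
\item A descending chain is excluded by DCC, so the chain is ascending: $s\le t_1<t_2<\cdots$, all in $\Ucal\sm\Vcal$.
\end{enumerate}

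\textbf{Main obstacle.} The hard step is excluding an infinite ascending chain $t_1<t_2<\cdots$ in $\Ucal\sm\Vcal$. Since $t_i\notin\Vcal$, we have $t_i^r\in\Vcal$, and the reversed elements form an infinite descending chain $t_1^r>t_2^r>\cdots$ in $\Vcal$. This contradicts the DCC for $\Vcal$. Discreteness turns out not to be needed for this; it would only matter for a bound on the size of $\Ucal\triangle\Vcal$, and I would note its role in later results instead.
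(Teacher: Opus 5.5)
Your overall plan matches the paper's. Minimality comes from upward closure of $\mathcal V$, and your argument there is correct. Finiteness comes from Ramsey: antichains are excluded by crossing-finiteness, descending chains by the DCC of $\mathcal U$, and ascending chains by reversing them into a descending chain in $\mathcal V$. The gap is in showing that an antichain in $\mathcal U\setminus\mathcal V$ is pairwise crossing. You need this because you, like the paper's proof, read crossing-finiteness as bounding only sets of pairwise-crossing elements. In step 4 you exclude $t\le t'^r$ but never $t'^r\le t$ (equivalently $t^r\le t'$). That case cannot be excluded using $\mathcal U$ alone. In the path with vertices $a,v,b$, the two edges oriented towards $v$ both lie in the ultrafilter of $v$ and are incomparable. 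They satisfy exactly this relation, so they are nested rather than crossing. The missing case requires $\mathcal V$: since $t\notin\mathcal V$ we have $t^r\in\mathcal V$, and $t^r\le t'$ then forces $t'\in\mathcal V$, a contradiction.

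Step 2 goes wrong in a related way, because both of your inferences apply $(\mathrm{U}2)$ downwards. From $s'\le s^r$ and $s^r\in\mathcal V$ you cannot conclude $s'\in\mathcal V$. From $s'^r\le s$ and $s\in\mathcal U$ you cannot conclude $s'^r\in\mathcal U$. Each justification belongs to the other case. The relation $s\le s'^r$ fails because upward closure of $\mathcal U$ from $s$ would put $s'^r$ in $\mathcal U$. The relation $s'^r\le s$, i.e.\ $s^r\le s'$, fails because upward closure of $\mathcal V$ from $s^r$ would put $s'$ in $\mathcal V$. In any case, steps 1--3 can be deleted. The minimal element $s$ plays no role in the final contradiction, so you may apply Ramsey directly to $\mathcal U\setminus\mathcal V$, as the paper does. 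The paper asserts without proof that such antichains are pairwise crossing, so it is worth keeping the corrected two-case check explicit.
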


\begin{proof}
Write 
$W \coloneqq \mathcal{U} \sm \mathcal{V}$.
If $W$ were infinite then it would necessarily contain either an infinite chain or an infinite anti-chain (i.e. an infinite set of pairwise-crossing elements). Since $P$ is crossing-finite, $W$ must therefore contain an infinite (necessarily ascending) chain, say $\{s_i : i \in \mathbb N\}$ with $s_i < s_{i+1}$ for all $i \in \mathbb N$. But then $\{s_i^r : i \in \mathbb N\}$ is an infinite descending chain in $\mathcal V$, which contradicts the assumption that $\mathcal V$ is a DCC ultrafilter. 

Let $s$ be a $\le$-minimal separation in $W$. 
Then, $s^r \in \mathcal{V}$. 
Suppose, for the sake of contradiction, that there exists $t \in \mathcal{U} \sm W = \mathcal{U} \cap \mathcal{V}$ such that $ t <s$, so $ s^r<t^r$. 
Since $s^r \in \mathcal{V}$, this implies that $t^r \in \mathcal{V}$. 
Given that $t \in \mathcal{U}$ and $t^r \in \mathcal{V}$, it follows that $t \in W$, contradicting the $\le$-minimality of $s$ in $W$. 
Therefore, $s$ is a $\le$-minimal element of $\mathcal{U}$.
\end{proof}

\subsection{The dual median graph}

We are now ready to define the dual median graph, and verify its core properties. We will focus on discrete, crossing-finite pocsets, though some of this machinery does extend beyond this setting. 

\begin{defn}[The dual median graph]\label{def:M_sigma}
Let $G$ be a graph, let $P$ be a discrete, crossing-finite pocset.
We then define the graph
$M_P$
as follows:
\begin{enumerate}
\item The vertex set $V(M_P)$ is the set of all DCC ultrafilters on $P$.
\item Two vertices $\mathcal{U},\mathcal{V} \in V(M_P)$ are adjacent exactly when $|\mathcal{U} \triangle \mathcal{V}| = 2$.
Equivalently, $\mathcal{U}$ is adjacent to $\mathcal{V}$ if some $s\in P$ satisfies
\[
\mathcal{V}
\;=\;
\bigl(\mathcal{U} \sm \{s\}\bigr) \cup \{s^r\}.
\]
\item We label the oriented edge $(\mathcal U, \mathcal V) \in \vec E(M_P)$ with $s$, equipping $M_P$ with a \defin{labelling function}  $\ell : \vec E(M_P) \to P$. 
\end{enumerate}
The graph $M_P$ is called the \defin{median graph dual to $P$}. 
\end{defn}

There is much to verify here, most importantly we need to establish that $M_P$ is a connected median graph. The following proposition asserts that it is connected. 

\begin{prop}\label{prop:distance-formula}
    Let $P$ discrete, crossing-finite pocset. Then $M_P$ is connected. Given $\mathcal U, \mathcal V \in V(M_P)$, their distance in $M_P$ is given by the formula 
    $$
    d(\mathcal U, \mathcal V) = \frac 1 2 |\mathcal U \triangle \mathcal V|.
    $$
\end{prop}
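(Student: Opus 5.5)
The plan is to prove the two inequalities $d(\mathcal U,\mathcal V)\ge \frac12|\mathcal U\triangle\mathcal V|$ and $d(\mathcal U,\mathcal V)\le \frac12|\mathcal U\triangle\mathcal V|$ separately. The second will be established by constructing an explicit path, and that path also gives connectedness. First note that by \ref{p1}, for ultrafilters $\mathcal U,\mathcal V$ we have $s\in\mathcal U\sm\mathcal V$ if and only if $s^r\in\mathcal V\sm\mathcal U$. Hence $|\mathcal U\triangle\mathcal V| = 2|\mathcal U\sm\mathcal V|$. By \Cref{lem:symmetric-diff}, this quantity is finite whenever $\mathcal U,\mathcal V$ are DCC ultrafilters on a discrete, crossing-finite pocset.

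\emph{Lower bound.} By definition of the edges of $M_P$, adjacent vertices satisfy $|\mathcal W\triangle\mathcal W'|=2$. The symmetric difference obeys the triangle inequality $|\mathcal X\triangle\mathcal Z|\le|\mathcal X\triangle\mathcal Y|+|\mathcal Y\triangle\mathcal Z|$. So along any path $\mathcal U=\mathcal W_0,\ldots,\mathcal W_k=\mathcal V$ we get $|\mathcal U\triangle\mathcal V|\le 2k$, which gives $d(\mathcal U,\mathcal V)\ge\frac12|\mathcal U\triangle\mathcal V|$. This also covers the case where no path exists, with the convention $d=\infty$.

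\emph{Upper bound and connectedness.} We induct on $n=|\mathcal U\sm\mathcal V|$; the case $n=0$ is trivial, since then $\mathcal U=\mathcal V$. If $n>0$, the set $W=\mathcal U\sm\mathcal V$ is finite and nonempty, so it has a $\le$-minimal element $s$. By the second part of \Cref{lem:symmetric-diff}, $s$ is $\le$-minimal in $\mathcal U$. Then \Cref{lem:flipping-elements} shows that $\mathcal U'=(\mathcal U\sm\{s\})\cup\{s^r\}$ is an ultrafilter. Since $\mathcal U\triangle\mathcal U'$ is finite, the lemma on finite symmetric differences preserving DCC shows $\mathcal U'$ is DCC, so $\mathcal U'\in V(M_P)$ and it is adjacent to $\mathcal U$. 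Moreover $\mathcal U'\sm\mathcal V=W\sm\{s\}$, because $s^r\in\mathcal V$. The induction hypothesis now gives a path of length $n-1$ from $\mathcal U'$ to $\mathcal V$. Prepending the edge from $\mathcal U$ yields a path of length $n=\frac12|\mathcal U\triangle\mathcal V|$, which proves both connectedness and the formula.

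All of this is routine given the three lemmas. The main point requiring care is choosing $s$ minimal in $\mathcal U\sm\mathcal V$ rather than in $\mathcal U$, and then invoking \Cref{lem:symmetric-diff} to upgrade its minimality. Finiteness of $W$ is what guarantees that a minimal element exists.
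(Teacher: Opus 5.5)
Your proof is correct and follows the paper's argument: you induct on the size of the symmetric difference, flip a $\le$-minimal element of $\mathcal U\sm\mathcal V$ (upgraded to minimal in $\mathcal U$ via \Cref{lem:symmetric-diff}) using \Cref{lem:flipping-elements}, and recurse. Your explicit lower bound from the triangle inequality for symmetric differences, and your check that the flipped ultrafilter is still DCC, supply two steps that the paper leaves implicit when it asserts that the concatenated path is a geodesic.
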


\begin{proof}
    Let $\mathcal U$ and $\mathcal V$ be DCC ultrafilters on $P$. Write
$k \coloneqq |\mathcal{U}\triangle\mathcal{V}|$.
We have seen by \Cref{lem:symmetric-diff} that $k$ is finite. It is easy to see then that $k$ must be even.
We proceed by induction on the even integer $k$.
If $k=2$ then
$\mathcal{U}$ and $\mathcal{V}$ differ in exactly one ordered pair
$\{s,s^r\}$; hence they are adjacent, and obviously the single edge
$\gamma\colon\mathcal{U},\mathcal{V}$ is a geodesic between these two points.
Assume then that $k>2$ and that the claim holds for all
even values smaller than $k$.
Choose a $\le$-minimal separation
$s\in\mathcal{U}\sm\mathcal{V}$.
By \Cref{lem:symmetric-diff}, $s$ is also minimal in $\mathcal{U}$; hence
\Cref{lem:flipping-elements} implies that
$\mathcal{W}\coloneqq \bigl(\mathcal{U}\sm\{s\}\bigr)\cup\{s^r\}$
is a DCC ultrafilter adjacent to $\mathcal{U}$ in $M_P$.
Because exactly one ordered pair has been flipped,
$|\mathcal{W}\triangle\mathcal{V}|=k-2$.
By the induction hypothesis there exists a geodesic
$\gamma'$ from $\mathcal{W}$ to $\mathcal{V}$.
Concatenating the initial edge
$\mathcal{U}\mathcal{W}$ with $\gamma'$ yields a
$\bigl(\mathcal{U},\mathcal{V}\bigr)$-geodesic $\gamma$ in $M_P$.
\end{proof}

We now verify that $M_P$ is indeed a median graph. 

\begin{prop}\label{lem.it.is.median.graph}
Let $P$ be a discrete, crossing-finite pocset. 
Then $M_P$ is a median graph.
\end{prop}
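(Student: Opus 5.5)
We construct the median of three DCC ultrafilters by majority vote. Given $\mathcal U_1,\mathcal U_2,\mathcal U_3 \in V(M_P)$, define
$$
\mathcal M \coloneqq \{s \in P : s \text{ lies in at least two of } \mathcal U_1,\mathcal U_2,\mathcal U_3\}.
$$
Connectivity is already given by \Cref{prop:distance-formula}, so it remains to show that $\mathcal M$ is a vertex of $M_P$, that it lies in $I(\mathcal U_1,\mathcal U_2,\mathcal U_3)$, and that it is the only such vertex.

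\emph{Step 1: $\mathcal M$ is a DCC ultrafilter.} For \ref{p1}, note that for each pair $\{s,s^r\}$ each $\mathcal U_i$ contains exactly one of the two. By pigeonhole, exactly one of $s,s^r$ receives at least two votes. For \ref{p2}, if $s \in \mathcal M$ and $s \le t$, then every $\mathcal U_i$ containing $s$ also contains $t$, so $t$ has at least two votes. For the DCC, observe that $\mathcal M \subseteq \mathcal U_1 \cup (\mathcal U_2 \cap \mathcal U_3)$, so $\mathcal M \sm \mathcal U_1 \subseteq \mathcal U_1 \triangle \mathcal U_2$, which is finite by \Cref{lem:symmetric-diff}. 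Thus $\mathcal M \triangle \mathcal U_1$ is finite, and $\mathcal M$ is DCC by the lemma on finite symmetric differences.

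\emph{Step 2: $\mathcal M \in I(\mathcal U_i,\mathcal U_j)$.} By \Cref{prop:distance-formula} we have $d = \tfrac12|\cdot\triangle\cdot|$. So it suffices to prove the set identity
$$
\mathcal U_i \triangle \mathcal U_j = (\mathcal U_i \triangle \mathcal M) \sqcup (\mathcal M \triangle \mathcal U_j).
$$
If $s \in \mathcal U_i \cap \mathcal U_j$, then $s \in \mathcal M$, and symmetrically for $s^r$; so $\mathcal M$ agrees with both $\mathcal U_i$ and $\mathcal U_j$ wherever they agree. Hence $(\mathcal U_i \triangle \mathcal M) \cup (\mathcal M \triangle \mathcal U_j) \subseteq \mathcal U_i \triangle \mathcal U_j$. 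On $\mathcal U_i \triangle \mathcal U_j$, the set $\mathcal M$ agrees with exactly one of $\mathcal U_i, \mathcal U_j$, which gives disjointness and equality.

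\emph{Step 3: uniqueness.} Let $\mathcal W \in I(\mathcal U_1,\mathcal U_2,\mathcal U_3)$. By the distance formula, $\mathcal W \in I(\mathcal U_i,\mathcal U_j)$ means $|\mathcal U_i\triangle\mathcal U_j| = |\mathcal U_i \triangle \mathcal W| + |\mathcal W \triangle \mathcal U_j|$. Combined with the triangle inequality for symmetric differences, this forces
$$
(\mathcal U_i \triangle \mathcal W)\cap(\mathcal W\triangle\mathcal U_j)=\emptyset
\quad\text{and}\quad
\mathcal U_i\triangle \mathcal W \subseteq \mathcal U_i \triangle \mathcal U_j .
$$
In particular, $\mathcal W \supseteq \mathcal U_i \cap \mathcal U_j$ for every pair $i,j$. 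So $\mathcal W \supseteq \mathcal M$, and since both are ultrafilters, \ref{p1} gives $\mathcal W = \mathcal M$.

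The only mildly delicate points are the translation in Step 3 from metric betweenness to set containment, and the DCC in Step 1. Everything else is direct verification.
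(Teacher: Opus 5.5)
Your proof is correct and takes the same approach as the paper: both use the majority-vote ultrafilter $(\mathcal U_1\cap\mathcal U_2)\cup(\mathcal U_1\cap\mathcal U_3)\cup(\mathcal U_2\cap\mathcal U_3)$ as the median, and both get uniqueness from the sandwich $\mathcal U_i\cap\mathcal U_j\subseteq\mathcal W\subseteq\mathcal U_i\cup\mathcal U_j$. Your Step~1 (DCC via finite symmetric difference with $\mathcal U_1$) and Step~2 (that $\mathcal M$ actually lies in each interval) also supply two points the paper's written proof asserts or omits.
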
 

\begin{proof}
Let $\mathcal{U},\mathcal{V},\mathcal{W}\in V(M_P)$.
Then we shall show that the set
\[
I(\mathcal{U},\mathcal{V},\mathcal{W})
\;\coloneqq\;
I(\mathcal{U},\mathcal{V})\cap I(\mathcal{U},\mathcal{W})\cap I(\mathcal{V},\mathcal{W})
\]
has cardinality~$1$.
Let
\begin{equation}\label{eq:C-def}
\mathcal{C}
\;\coloneqq\;
(\mathcal{U}\cap\mathcal{V})
\;\cup\;
(\mathcal{U}\cap\mathcal{W})
\;\cup\;
(\mathcal{V}\cap\mathcal{W}).
\end{equation}
We first show that $\mathcal C$ is a DCC ultrafilter.
It is not hard to see that
\begin{equation}\label{eq:C-Demorgan}
\mathcal{C}
\;=\;
(\mathcal{U}\cup\mathcal{V})
\;\cap\;
(\mathcal{U}\cup\mathcal{W})
\;\cap\;
(\mathcal{V}\cup\mathcal{W}),
\end{equation}
which is simply De Morgan’s law applied to~\eqref{eq:C-def}.  
We verify the two required axioms.
For any $s\in\Sigma$ at least two of the vertices
$\mathcal{U},\mathcal{V},\mathcal{W}$ contain exactly one of$s$
or $s^r$.
A straightforward case analysis using~\eqref{eq:C-Demorgan} shows that
exactly one of $s$ and $s^r$ belongs to $\mathcal{C}$, and so \ref{p1} holds. 
For \ref{p2},  suppose $s\in\mathcal{C}$ and $s\le t\in P$.
Because $s$ lies in at least two of the three given ultrafilters, we may assume
$s\in\mathcal{U}\cap\mathcal{V}$.  
Since $\mathcal{U}$ and $\mathcal{V}$ satisfy Axiom~\ref{p2},
$t$ lies in both, whence $t\in\mathcal{C}$
by~\eqref{eq:C-def}.  Thus $\mathcal{C}$ is closed upward.

Next, we show that $\mathcal{C}$ is the unique common interval vertex.
Let $\mathcal{X}\in I(\mathcal{U},\mathcal{V},\mathcal{W})$.  
We have the following claim:

\begin{clm}\label{cl:interval-bounds}
If $\mathcal{X}\in I(\mathcal{U},\mathcal{V})$ then
$
\mathcal{U}\cap\mathcal{V}\subseteq\mathcal{X}\subseteq\mathcal{U}\cup\mathcal{V}.
$
\end{clm}

\begin{claimproof}
We can compute
\begin{align*}
d(\Ucal,\Vcal)=\frac{1}{2}|\Vcal\triangle \Ucal|=&\frac{1}{2}|(\Vcal\triangle \mathcal{X} )\triangle(\mathcal{X} \triangle\Ucal)|\\
=&\frac{1}{2}|(\Vcal\triangle \mathcal{X} )|+\frac{1}{2}|(\mathcal{X} \triangle\Ucal)|-|(\mathcal{X} \triangle\Ucal)\cap(\mathcal{X} \triangle\Vcal) |\\
=&d(\Vcal,\mathcal X)+d(\Ucal,\mathcal X)-|(\mathcal X\triangle\Ucal)\cap(\mathcal X\triangle\Vcal) |.
\end{align*}
Since $\mathcal{X} \in I(\Ucal,\Vcal)$, we infer that $d(\Ucal,\Vcal)$ is the same as $d(\mathcal{X} ,\Ucal)+d(\mathcal X,\Vcal)$ 
which implies $(\mathcal{X} \triangle\Ucal)\cap(\mathcal{X} \triangle\Vcal) $ must be empty. From this, the claim follows. 
\end{claimproof}

Applying \Cref{cl:interval-bounds} to each pair yields
\[
\mathcal{U}\cap\mathcal{V}\subseteq\mathcal{X}\subseteq\mathcal{U}\cup\mathcal{V},\qquad
\mathcal{U}\cap\mathcal{W}\subseteq\mathcal{X}\subseteq\mathcal{U}\cup\mathcal{W},\qquad
\mathcal{V}\cap\mathcal{W}\subseteq\mathcal{X}\subseteq\mathcal{V}\cup\mathcal{W}.
\]
Intersecting the upper bounds and taking the union of the lower bounds gives
$
\mathcal{C}\subseteq\mathcal{X}\subseteq\mathcal{C}.
$
Hence $\mathcal{X}=\mathcal{C}$, and
$I(\mathcal{U},\mathcal{V},\mathcal{W})=\{\mathcal{C}\}$.
Now, because $\mathcal{X}\in I(\mathcal{U},\mathcal{V})\cap I(\mathcal{U},\mathcal{W})\cap I(\mathcal{V},\mathcal{W})$,
the same distance argument applied to each pair yields
\begin{equation}\label{eq:X-bounds}
(\mathcal{U}\cap\mathcal{V})\cup(\mathcal{V}\cap\mathcal{W})\cup(\mathcal{U}\cap\mathcal{W})
\;\subseteq\;
\mathcal{X}
\;\subseteq\;
(\mathcal{U}\cup\mathcal{V})\cap(\mathcal{V}\cup\mathcal{W})\cap(\mathcal{U}\cup\mathcal{W}).
\end{equation}
However, by \eqref{eq:C-def} and \eqref{eq:C-Demorgan} the lower and upper bounds in
\eqref{eq:X-bounds} coincide and are both equal to $\mathcal{C}$.  We thus have that 
$
I(\mathcal{U},\mathcal{V},\mathcal{W})=\{\mathcal{C}\},
$
and $M_P$ is a median graph. 
\end{proof}

\begin{rem}
    Sageev's construction extends naturally beyond just considering DCC ultrafilters as we have done above, though one needs to consider objects more general than median graphs, namely \emph{median algebras}. This leads to a very general and powerful theory of duality between pocsets and median algebras. We recommend Roller's thesis as a starting point to learn more \cite{roller-thesis}. 
\end{rem}

\subsection{The \texorpdfstring{$\Theta$}{Theta}-classes of the dual median graph}

We now describe how the $\Theta$-classes of the dual median graph $M_P$ are in one-to-one correspondence with elements of $P$, via the labelling function $\ell$.

\begin{prop}
    Let $P$ be a discrete, crossing-finite pocset.  Let $e,f \in \vec E(M_P)$ be such that $e \,\Theta f$. Then $\ell(e) = \ell(f)$. 
\end{prop}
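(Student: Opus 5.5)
Write $e=(\mathcal U,\mathcal V)$ with $\ell(e)=s$, so $\mathcal V=(\mathcal U\sm\{s\})\cup\{s^r\}$. Similarly write $f=(\mathcal X,\mathcal Y)$ with $\ell(f)=t$, so $\mathcal Y=(\mathcal X\sm\{t\})\cup\{t^r\}$. The plan is to compute the four distances in the definition of $\Theta$ using the formula $d=\tfrac12|\cdot\triangle\cdot|$ of \Cref{prop:distance-formula}, and show that the strict inequality
$$d(\mathcal U,\mathcal X)+d(\mathcal V,\mathcal Y)<d(\mathcal U,\mathcal Y)+d(\mathcal V,\mathcal X)$$
forces $s=t$.

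For any ultrafilter $\mathcal W$, flipping a single element $s$ changes $|\mathcal W\triangle\cdot|$ by exactly $\pm2$. So $d(\mathcal V,\mathcal Z)=d(\mathcal U,\mathcal Z)+1$ if $s\in\mathcal Z$, and $d(\mathcal V,\mathcal Z)=d(\mathcal U,\mathcal Z)-1$ if $s^r\in\mathcal Z$. In the notation of the earlier proof, $\sigma_e(\mathcal Z)=+1$ exactly when $s\in\mathcal Z$, where the sign is read as $d(\mathcal Z,\mathcal V)-d(\mathcal Z,\mathcal U)$.

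Using the reformulation $(\ast)$ from the proof that $\Theta$ is an equivalence relation, $e\,\Theta f$ holds if and only if $\sigma_f(\mathcal U)=1$ and $\sigma_f(\mathcal V)=-1$. By the previous paragraph applied to $f$, this says $t\in\mathcal U$ and $t^r\in\mathcal V$, i.e.\ $t\in\mathcal U\sm\mathcal V=\{s\}$. Hence $t=s$.

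The only point needing care is the bookkeeping of signs and orientations. One must match the convention of $(\ast)$: which endpoint is $x$ versus $y$, and which is $u$ versus $v$. Equivalently, one can verify directly that $d(\mathcal U,\mathcal Y)+d(\mathcal V,\mathcal X)-d(\mathcal U,\mathcal X)-d(\mathcal V,\mathcal Y)$ equals $2$ when $t\in\mathcal U\setminus\mathcal V$, and is $0$ or $-2$ otherwise. This is a four-case check on the membership of $t$ in $\mathcal U$ and in $\mathcal V$; since $\mathcal U\triangle\mathcal V=\{s,s^r\}$, $t$ lies in both or in neither unless $t\in\{s,s^r\}$. I expect no genuine obstacle beyond this case check.
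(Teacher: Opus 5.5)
Your proof is correct, and it takes a somewhat different route from the paper's.

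The paper argues by contradiction. It sets $R=\mathcal U_1\triangle\mathcal V_1$, $S=\{s,s^r\}$ and $Q=\{t,t^r\}$. It then rewrites the four distances as $\tfrac12|R|$, $\tfrac12|R\triangle S\triangle Q|$, $\tfrac12|R\triangle Q|$ and $\tfrac12|R\triangle S|$. Inclusion--exclusion then shows that the two sides of the $\Theta$-inequality coincide when $S\cap Q=\emptyset$.

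You instead use a local fact: crossing an edge labelled $t$ raises the distance to $\mathcal Z$ by one exactly when $t\in\mathcal Z$. Feeding this into the sign reformulation $(\ast)$ gives $e\,\Theta f$ if and only if $t\in\mathcal U\sm\mathcal V=\{s\}$.

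Your route gains two things over the paper's.
\begin{enumerate}
\item The paper's step ``$S\cap Q=\varnothing$'' does not follow from $s\neq t$ alone, because $t=s^r$ gives $Q=S$. That case needs its own check. There one has $S\subseteq R$, so the inequality reads $2|R|<2|R|-4$. Your argument rules out $t=s^r$ automatically.
\item Your computation is a biconditional. So it also proves the converse, that edges with the same label are $\Theta$-related. The paper proves this separately, as the injectivity half of \Cref{prop:labels-biject-theta-classes}.
\end{enumerate}

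In exchange, the paper's computation is self-contained and does not rely on $(\ast)$. Your flip computation already shows that each bracket in $(\ast)$ is exactly $\pm1$ in $M_P$, so nothing is lost by using it.
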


\begin{proof}
    Write $e=(\Ucal_1,\Ucal_2)$,  $f=(\Vcal_1,\Vcal_2)$,  $\ell(e) = s$, and $\ell(f) = t$. 
By construction, we have that
      $\Ucal_2 = \Ucal_1 \triangle \{s,s^r\}$ and  
      $\Vcal_2 = \Vcal_1 \triangle \{t,t^r\}$.
Assume to the contrary that $s \neq t$.
Since $e\,\Theta\,f$, we infer that
$$
d(\Ucal_1,\Vcal_1)+d(\Ucal_2,\Vcal_2)< 
d(\Ucal_1,\Vcal_2)+d(\Ucal_2,\Vcal_1). 
$$
By \Cref{prop:distance-formula}, this implies that
\[
|\Ucal_1\triangle \Vcal_1|+|\Ucal_2\triangle \Vcal_2|
< 
|\Ucal_1\triangle \Vcal_2|+|\Ucal_2\triangle \Vcal_1|.
\]
Set $R\coloneqq\Ucal_1\triangle \Vcal_1$, 
$S\coloneqq\{s,s^r\}$, and  $Q\coloneqq\{t,t^r\}$.
The inequality becomes
$$
\lvert R\rvert
+
\lvert R \triangle S \triangle Q\rvert
< 
\lvert R \triangle Q\rvert
+
\lvert R \triangle S\rvert.
$$
However, we will see that equality actually holds, giving us a contradiction. First, note that
\[
\begin{aligned}
\lvert R\triangle S\rvert
  &= \lvert R\rvert+\lvert S\rvert-2\lvert R\cap S\rvert,\\
\lvert R\triangle Q\rvert
  &= \lvert R\rvert+\lvert Q\rvert-2\lvert R\cap Q\rvert,\\
\lvert R\triangle S\triangle Q\rvert
  &=\lvert R\rvert+\lvert S\rvert+\lvert Q\rvert
     \;-\;2\!\bigl(\lvert R\cap S\rvert+\lvert R\cap Q\rvert+\lvert S\cap Q\rvert\bigr)+\;4\lvert R\cap S\cap Q\rvert.
\end{aligned}
\]
Since $S\cap Q=\varnothing$, we conclude that 
$\lvert S\cap Q\rvert=\lvert R\cap S\cap Q\rvert=0$.
Substitution yields
\[
\begin{aligned}
\lvert R\rvert + \lvert R\triangle S\triangle Q\rvert
  &= \lvert R\rvert + \Bigl(\lvert R\rvert+\lvert S\rvert+\lvert Q\rvert
     -2\lvert R\cap S\rvert-2\lvert R\cap Q\rvert\Bigr)\\
  &= 2\lvert R\rvert + \lvert S\rvert+\lvert Q\rvert
     -2\lvert R\cap S\rvert-2\lvert R\cap Q\rvert\\
  &= \bigl(\lvert R\rvert+\lvert S\rvert-2\lvert R\cap S\rvert\bigr)
     +\bigl(\lvert R\rvert+\lvert Q\rvert-2\lvert R\cap Q\rvert\bigr)\\
  &= \lvert R\triangle S\rvert + \lvert R\triangle Q\rvert.
\end{aligned}
\]   
The result follows.
\end{proof}

\begin{prop}\label{prop:labels-biject-theta-classes}
    Let $P$ be a discrete, crossing-finite pocset.  
    Then the labelling map $\ell : \vec E(M_P) \to P$ descends to a bijection  $\overline \ell : \vec {\mathcal H}(M_P) \to P$. 
\end{prop}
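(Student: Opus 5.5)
The plan is to establish three things: the labelling $\ell$ is constant on oriented $\Theta$-classes, the induced map $\overline\ell$ is injective, and it is surjective. Well-definedness is exactly the preceding proposition: if $e\,\Theta f$ then $\ell(e)=\ell(f)$. So $\overline\ell : \vec{\mathcal H}(M_P) \to P$, $[e]\mapsto \ell(e)$, is well defined. Throughout I use the convention from \Cref{def:M_sigma}: an oriented edge $(\mathcal U,\mathcal V)$ labelled $s$ has $s\in\mathcal U$ and $\mathcal V=(\mathcal U\sm\{s\})\cup\{s^r\}$. I also use \Cref{prop:distance-formula} for distances.

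\emph{Injectivity.} Let $e=(\mathcal U_1,\mathcal U_2)$ and $f=(\mathcal V_1,\mathcal V_2)$ both be labelled $s$. Then $s\in\mathcal U_1\cap\mathcal V_1$ and $s^r\in\mathcal U_2\cap\mathcal V_2$. Put $R=\mathcal U_1\triangle\mathcal V_1$, which contains neither $s$ nor $s^r$. Since both edges flip the same pair $\{s,s^r\}$, we get $\mathcal U_2\triangle\mathcal V_2=R$, while $\mathcal U_1\triangle\mathcal V_2=\mathcal U_2\triangle\mathcal V_1=R\sqcup\{s,s^r\}$. Hence
$$d(\mathcal U_1,\mathcal V_1)+d(\mathcal U_2,\mathcal V_2)=|R|<|R|+2=d(\mathcal U_1,\mathcal V_2)+d(\mathcal U_2,\mathcal V_1).$$
This is precisely $e\,\Theta f$, so distinct oriented $\Theta$-classes carry distinct labels.

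\emph{Surjectivity.} Fix $s\in P$. It suffices to find a DCC ultrafilter $\mathcal V$ in which $s$ is $\le$-minimal. Then \Cref{lem:flipping-elements} makes $(\mathcal V\sm\{s\})\cup\{s^r\}$ an ultrafilter. It differs from $\mathcal V$ in one pair, so it is DCC by the lemma on finite symmetric differences, and the resulting edge is labelled $s$. I take as given that $M_P$ is nonempty (this is implicit in treating $M_P$ as a median graph), and start from any DCC ultrafilter $\mathcal U$. If $s^r\in\mathcal U$, I first replace $\mathcal U$ by an ultrafilter in which $s^r$ is minimal (by the construction below) and flip $s^r$; so assume $s\in\mathcal U$. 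Let $D=\{t\in\mathcal U : t<s\}$ and define
$$\mathcal V=(\mathcal U\sm D)\cup\{t^r : t\in D\}.$$

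\emph{Main obstacle: checking that $\mathcal V$ is a DCC ultrafilter.} First, $D$ is finite. Otherwise, since antichains in $P$ are finite, a Ramsey argument gives an infinite chain in $D$. A descending chain contradicts the DCC for $\mathcal U$, and an ascending chain $t_1<t_2<\dots<s$ contradicts discreteness of $[t_1,s]$. So $\mathcal U\triangle\mathcal V$ is finite, and $\mathcal V$ is DCC once it is an ultrafilter. Property \ref{p1} is clear.

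For \ref{p2}, take $x\in\mathcal V$ and $x\le y$. If $x\in\mathcal U\sm D$, then $y\in\mathcal U$, and $y<s$ would force $x<s$; so $y\in\mathcal U\sm D\subset\mathcal V$. If $x=t^r$ with $t\in D$, then $y^r\le t<s$. If $y^r\in\mathcal U$, then $y^r\in D$ and $y\in\mathcal V$. If instead $y\in\mathcal U$ and $y<s$, we would have $s^r<t^r\le y<s$, contradicting the pocset axiom $s^r\not\le (s^r)^r$. Hence $y\in\mathcal U\sm D\subset\mathcal V$.

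Finally, $s$ is minimal in $\mathcal V$. Any $u<s$ in $\mathcal V$ either lies in $\mathcal U\sm D$, which is impossible, or equals $t^r$ with $t<s$. In the latter case $s^r<t^r<s$, which is again impossible. This completes surjectivity, and with it the bijection $\overline\ell$.
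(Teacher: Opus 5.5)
Your proof is correct and follows the paper's argument. Injectivity uses the same computation showing $|R|<|R\triangle\{s,s^r\}|$, and surjectivity flips the finitely many elements of $\mathcal U$ lying below $s$ so that $s$ becomes minimal and can be flipped via \Cref{lem:flipping-elements}. The only differences are cosmetic: you flip all of $D$ at once and check the ultrafilter axioms directly instead of flipping minimal elements one at a time, and you spell out the finiteness of $D$ and the case $s^r\in\mathcal U$, which the paper leaves implicit.
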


\begin{proof}
    We first show that the induced map $\overline \ell$ is injective. 
    Let $e, f \in \vec E(M_P)$ satisfy $\ell(e) = \ell(f) =:s$. We need only show that $e\,\Theta f$. 
Write $e=(\Ucal_1,\Ucal_2)$, $f=(\Vcal_1,\Vcal_2)$,
Let $Q=\{s,s^r\}$.
We have that $\Ucal_2 = \Ucal_1 \triangle Q$, and $\Vcal_2 = \Vcal_1 \triangle Q$.
Let $R = \Ucal_1 \triangle \Vcal_1$.
By \Cref{prop:distance-formula}, it suffices to show 
$|\Ucal_1\triangle \Vcal_1|+|\Ucal_2\triangle \Vcal_2|
  < 
  |\Ucal_1\triangle \Vcal_2|+|\Ucal_2\triangle \Vcal_1|$.
Substituting $\Ucal_2=\Ucal_1\triangle Q$ and $\Vcal_2=\Vcal_1\triangle Q$, we get
\begin{align*}
  \Ucal_2\triangle \Vcal_2
    &=(\Ucal_1\triangle Q)\triangle(\Vcal_1\triangle Q)
    =\Ucal_1\triangle \Vcal_1 = R,\\
  \Ucal_1\triangle \Vcal_2 &= \Ucal_1\triangle(\Vcal_1\triangle Q)
    =(\Ucal_1\triangle \Vcal_1)\triangle Q = R\triangle Q, \\
  \Ucal_2\triangle \Vcal_1
    &=(\Ucal_1\triangle Q)\triangle \Vcal_1
    =(\Ucal_1\triangle \Vcal_1)\triangle Q = R\triangle Q.
\end{align*}
It follows that we need only prove $|R| < |R\triangle Q|$.
Note that $s\in \Ucal_1\cap \Vcal_1$ which implies that  $s\notin R$.
On the other hand $s^r\notin \Ucal_1\cup \Vcal_1$ which implies that $ s^r\notin R$.
Thus $Q\cap R=\emptyset$.  
Hence
$$
|R\triangle Q|
  \;=\;
  |R|+|Q|-2|R\cap Q|
  \;=\;
  |R|+2-0
  \;=\;
  |R|+2, 
$$
so indeed $|R|<|R\triangle Q|$.  
It follows that
$$
  d(\Ucal_1,\Vcal_1)+d(\Ucal_2,\Vcal_2)
  <
  d(\Ucal_1,\Vcal_2)+d(\Ucal_2,\Vcal_1),
$$
which proves $e\,\Theta\,f$, and so $\overline \ell$ is injective. 

To see that $\overline \ell$ is surjective, by \Cref{lem:flipping-elements} it suffices to show that every $s \in P$ is a minimal element in some DCC ultrafilter. Let $\mathcal U \in V(M_P)$ be arbitrary. Consider $S =\{t \in \mathcal U : t < s\}$. It follows from the fact that $P$ is discrete and crossing-finite, and that $\mathcal U$ is DCC, that $S$ is finite. In particular, it contains a minimal element, which by \Cref{lem:flipping-elements} we may flip to obtain a new ultrafilter with strictly fewer elements smaller than $s$. Repeating this a finite number of times, we eventually uncover a DCC ultrafilter where $s$ is a minimal element. 
\end{proof}

\begin{rem}\label{rem:distinct-labels-on-geodesics}
    Recall that a geodesic in a median graph can cross any given $\Theta$-class at most once; this follows from \Cref{prop:halfspaces-convex}. Combining this with \Cref{prop:labels-biject-theta-classes} above, we deduce that if $P$ is a discrete, crossing-finite pocset, then given any geodesic in the dual median graph $M_P$, the edges in this geodesic are labelled by pairwise distinct elements of $P$, even up to reversal.
\end{rem}


\section{The dual median decomposition}\label{sec:dual-meddecomp}

In this section, we return to the concrete realm of separation systems, and construct a median decomposition over the dual median graph. 

\subsection{Principal ultrafilters}

In order to verify the required axioms when we construct our decomposition, it will be helpful to have access to a certain type of ultrafilter, defined as follows. 

\begin{defn}[Principal ultrafilters]
    Let $G$ be a graph and let $\Sigma$ be a system of separations. Given $x \in V(G)$, an ultrafilter $\mathcal U$ on $\Sigma$ is said to be \defin{$x$-principal} if $x \in A$ for all $(A,B) \in \mathcal U$. 
\end{defn}




%


When the ECC property is satisfied, we gain access to principal DCC ultrafilters. 

\begin{prop}\label{prop:x-principle}
    Let $G$ be a graph and let $\Sigma$ be a discrete, crossing-finite, ECC system of separations. Then for every $x \in V(G)$, there exists a DCC $x$-principal ultrafilter. Moreover, if $(A,B) \in \Sigma$ is such that $x \in A$, then we may take $(A,B) \in \mathcal U$. 
\end{prop}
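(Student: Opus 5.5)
The plan is to build the ultrafilter in two layers. The first layer consists of the separations whose orientation is forced by $x$. The second layer consists of the separations containing $x$ in their separator, whose orientation we must choose.

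Let $S_x := \{(C,D) \in \Sigma : x \in C \sm D\}$ and $T_x := \{(C,D)\in\Sigma : x \in C\cap D\}$. Every unordered pair $\{s,s^r\}$ of $\Sigma$ has either exactly one member in $S_x$, or both members in $T_x$.

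The first step is to check that $S_x$ behaves like a partial ultrafilter. It is upward closed: if $x\in C\sm D$ and $(C,D)\le(C',D')$, then $C\subseteq C'$ and $D'\subseteq D$, so $x\in C'\sm D'$. It never contains both $s$ and $s^r$. Moreover, by the ECC, $S_x$ contains no infinite strictly descending chain, since such a chain would put $x$ in $\bigcap_n (C_n\sm D_n)=\emptyset$.

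Next I add the prescribed separation $(A,B)$; if $x\in A\sm B$ it already lies in $S_x$. Otherwise let $O$ be the upward closure of $S_x\cup\{(A,B)\}$. I check consistency of $O$ as follows. If both $t$ and $t^r$ were $\ge (A,B)$, then $(A,B)\le t\le (B,A)$, which forces $A\subseteq B$, i.e.\ $B=V(G)$, contradicting essentiality. If $t=(C,D)\ge (A,B)$ and $t^r\in S_x$, then $x\in A\subseteq C$ contradicts $x\notin C$. Every element of $O$ has $x$ on its first side, and this property is preserved by all later steps.

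The remaining step is to orient the rest of $T_x$ so that the result is an upward closed, DCC ultrafilter. I would do this by a transfinite or Zorn-type extension. At each step, pick an unoriented pair $\{s,s^r\}\subseteq T_x$ and add whichever of $s, s^r$ keeps the upward closure consistent. Standard pocset reasoning shows that at least one choice works: if both failed, we would get $u\le s^r$ and $u'\le s$ with $u,u'$ already chosen, hence $u\le s^r\le u'^r$, contradicting consistency. Conditions \ref{p1} and \ref{p2} then hold for the resulting $\mathcal U$, and $x$-principality is automatic.

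The main obstacle is the DCC. The ECC says nothing about chains $(C_n,D_n)$ with $x\in C_n\cap D_n$, so the orientation of $T_x$ must be chosen carefully. I would make the choice in a controlled order. Fix any DCC ultrafilter $\mathcal V$ if one is available (for example, one obtained from some other vertex), and whenever there is freedom, orient the pair of $T_x$ as in $\mathcal V$. Then $\mathcal U\triangle\mathcal V$ is contained in the set of forced choices. An infinite descending chain in $\mathcal U$ would then, by Ramsey and crossing-finiteness, produce an infinite descending chain either inside $\mathcal V$ (impossible) or inside the forced part $O$. For the latter case I would use discreteness: a descending chain in $O$ that lies above $(A,B)$ is contained in a finite interval, and a descending chain meeting $S_x$ infinitely often is excluded by the ECC.

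Making this bookkeeping rigorous, and in particular producing the auxiliary DCC ultrafilter without circularity, is the step I expect to be hardest. If it fails, the fallback is to orient $T_x$ by always taking the $\le$-larger option within finite crossing clusters.
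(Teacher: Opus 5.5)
Your first layer and the consistency checks are sound. $S_x$ is upward closed, consistent, and DCC by the ECC. The upward closure $O=S_x\cup\{t\in\Sigma : t\ge(A,B)\}$ is consistent. The Zorn extension yields an $x$-principal ultrafilter containing $(A,B)$.

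The gap is the DCC, which is the whole content of the statement. Your argument for it runs through an auxiliary DCC ultrafilter $\mathcal V$ on $\Sigma$ that you never produce. Taking it ``from some other vertex'' is this very proposition, so that is circular. The fallback is not a usable rule either. Since $s$ and $s^r$ are never comparable, there is no ``$\le$-larger option''. Local choices inside crossing clusters also give no control over infinite descending chains in $T_x$, where the ECC says nothing.

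Once some DCC ultrafilter $\mathcal V$ is available, your plan works, and more simply than you suggest. The greedy process produces exactly $\mathcal U=O\cup(\mathcal V\sm O^{r})$, which is directly checked to be an $x$-principal ultrafilter. A strictly descending chain in $\mathcal U$ has an infinite subsequence in one of three places:
\begin{itemize}
\item in $\mathcal V$, excluded by DCC;
\item in $S_x$, excluded by the ECC;
\item above $(A,B)$, which traps it in a finite interval by discreteness.
\end{itemize}
No Ramsey argument is needed.

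The paper instead decouples the two layers. It shows that an element of $T_x$ nested with an element of $S_x$ always lies below it, or has its reverse below it. Hence $S_x\cup\mathcal W$ is a DCC ultrafilter for every DCC ultrafilter $\mathcal W$ on the symmetric subsystem $T_x$. It takes $\mathcal W$ to be the initial vertex of an edge of $M_{T_x}$ labelled $(A,B)$, using Proposition~\ref{prop:labels-biject-theta-classes}. You could equally take $\mathcal V\in V(M_\Sigma)$. That proposition's surjectivity argument starts from some vertex of $M_P$.

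Here is a non-circular way to get a DCC ultrafilter on any discrete, crossing-finite pocset $P$.
\begin{itemize}
\item Pick $s_1\in P$. Then pick $s_{i+1}$ crossing all of $s_1,\dots,s_i$ for as long as possible. This stops at a finite $k$ by crossing-finiteness.
\item Orient each pair $\{t,t^r\}$ at the first $i$ for which $t$ is nested with $s_i$. Choose the member that is $\ge s_i$ or $>s_i^r$; exactly one member qualifies.
\end{itemize}
To check upward closure, suppose $t\le u$ and $t$ is oriented at step $i$. At step $i$ itself, $u$ is chosen because $u\ge t$. If instead $u^r$ had been chosen at an earlier step $j$, then $t\le s_j^r$ or $t<s_j$, contradicting that $t$ crosses $s_j$. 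So this is an ultrafilter.

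It is DCC because a descending chain $t_1>t_2>\cdots$ would have infinitely many terms between $s_i$ (or $s_i^r$) and $t_1$ for a single $i$. That interval is finite by discreteness. Taking $P=T_x$ and $s_1=(A,B)$ gives exactly the $\mathcal W$ needed.
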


\begin{proof}
    Let $\Sigma_1$ be the set of all separations $(A,B)\in \Sigma$ such that either $x\in A\sm B$ or $x\in B\sm A$.  
Define
$\Vcal \coloneqq \{\, (A,B)\in \Sigma_1 : x\in  A\sm B \,\}$.
We first show that $\Vcal$ is an ultrafilter on $\Sigma_1$.
By construction, condition \ref{p1} is satisfied: for each separation in $\Sigma_1$, exactly one orientation is chosen.  
Next, suppose $(A,B)\in \Vcal$ and $(C,D)\in \Sigma_1$ with $(A,B)\le (C,D)$. Then $A\subseteq C$, and since $x\in A$, it follows that $x\in C$. Moreover, $D\subseteq B$ and $x\not\in B$, so $x\not\in D$. Hence $x\in C\sm D$, and therefore $(C,D)\in \Vcal$. Thus condition \ref{p2} also holds.
Since $\Sigma$ is ECC, it easily follows that $\Vcal$ is a DCC ultrafilter. We will now extend $\Vcal$ to a DCC ultrafilter on $\Sigma$. 

Let $\Sigma_2 = \Sigma \sm \Sigma_1$. Let us assume without loss of generality that $(A,B) \in \Sigma_2$, lest we already know that $(A,B) \in \Vcal$. We now claim that if $(C,D)\in \Sigma_2$ is nested with some $(A,B)\in \Vcal$, then either $(C,D)\le (A,B)$ or $(D,C)\le (A,B)$.
Suppose, to the contrary, that $(A,B)\le (C,D)$ or $(A,B)\le (D,C)$.  
If $(A,B)\le (C,D)$, then $x\in D\subseteq B$, contradicting $x\in A\sm B$.  
If $(A,B)\le (D,C)$, then $x\in C\subseteq B$, again contradicting $x\in A\sm B$.  
Hence neither case can occur, proving the claim. 
In particular, this implies that if $\mathcal W$ is an ultrafilter on $\Sigma_2$, then $\mathcal V \cup \mathcal W$ is an ultrafilter on $\Sigma$. 
Thus, it suffices for us to find a DCC ultrafilter $\Wcal$ on $\Sigma_2$ such that $(A,B) \in \Wcal$, as we may set $\Ucal := \Vcal \cup \Wcal$ to conclude. 

To see that such a $\mathcal W$ exists, simply consider the median graph $M_{\Sigma_2}$ dual to $\Sigma_2$. This makes sense as $\Sigma_2$ is certainly a discrete and crossing-finite separation system, being a symmetric subset of $\Sigma$. By \Cref{prop:labels-biject-theta-classes}, there exists $e \in \vec E(M_{\Sigma_2})$ labelled by $(A,B)$. Let $\mathcal W \in V(M_{\Sigma_2})$ be the initial vertex of $e$. This is a DCC ultrafilter on $\Sigma_2$ containing $(A,B)$, and thus we are done. 
\end{proof}


\subsection{Constructing the dual decomposition}

We are finally ready to construct a median decomposition over the dual median graph.

\begin{thm}\label{thm:median_decomposition}
Let $G$ be a graph and let $\Sigma$ be  discrete, crossing-finite, ECC separation system.
Define the map 
\[
\beta :
\begin{array}[t]{rcl}
   V\!\bigl(M_\Sigma\bigr) &\longrightarrow& \mathcal{P}\!\bigl(V(G)\bigr)\\[4pt]
   \mathcal{U}               &\longmapsto    & \displaystyle 
      \bigcap_{(A,B)\in\mathcal{U}} A
\end{array}
\]
Then $\bigl(M_\Sigma,\beta\bigr) $ is a median decomposition of $G $.
\end{thm}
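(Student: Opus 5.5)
The plan is to verify the three axioms \ref{m1}, \ref{m2}, \ref{m3} directly, using the ultrafilter description of $M_\Sigma$. Throughout I use the observation that $x \in \beta(\mathcal U)$ if and only if $\mathcal U$ is $x$-principal.

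\textbf{Axiom \ref{m1}.} Given $x \in V(G)$, \Cref{prop:x-principle} provides a DCC $x$-principal ultrafilter $\mathcal U$. Then $x \in \beta(\mathcal U)$.

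\textbf{Axiom \ref{m3}.} Let $\mathcal U, \mathcal V$ be $x$-principal and let $\mathcal X \in I(\mathcal U,\mathcal V)$. By \Cref{cl:interval-bounds} (from the proof of \Cref{lem.it.is.median.graph}) we have $\mathcal X \subseteq \mathcal U \cup \mathcal V$. So every $(A,B)\in\mathcal X$ lies in $\mathcal U$ or in $\mathcal V$, hence contains $x$ in $A$, and $\mathcal X$ is $x$-principal. Every vertex of a geodesic between $\mathcal U$ and $\mathcal V$ lies in $I(\mathcal U,\mathcal V)$. It follows that $\beta^{-1}(x)$ is closed under geodesics, hence induces a convex (in particular connected) subgraph. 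It is nonempty by \ref{m1}.

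\textbf{Axiom \ref{m2}.} I expect this to be the main step. I will adapt the proof of \Cref{prop:x-principle} to an edge $xy \in E(G)$. The key fact is that for every $(A,B)\in\Sigma$, no edge joins $A\sm B$ to $B\sm A$, so $\{x,y\}\subseteq A$ or $\{x,y\}\subseteq B$.
\begin{enumerate}
\item Let $\Sigma_2$ be the set of $(A,B)\in\Sigma$ with $\{x,y\}\subseteq A\cap B$, and $\Sigma_1 = \Sigma\sm\Sigma_2$. Both sets are symmetric.
\item Let $\mathcal V$ consist of those $(A,B)\in\Sigma_1$ with $\{x,y\}\subseteq A$; then $\{x,y\}\not\subseteq B$. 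By the key fact, exactly one orientation of each element of $\Sigma_1$ lies in $\mathcal V$.
\item $\mathcal V$ is upward closed in $\Sigma_1$: if $(A,B)\le(C,D)$, then $\{x,y\}\subseteq A \subseteq C$, and $D\subseteq B$ forces $\{x,y\}\not\subseteq D$.
\item $\mathcal V$ is DCC. An infinite strictly descending chain in $\mathcal V$ would have $x\in A_n\sm B_n$ for infinitely many $n$, or the same for $y$. Passing to that subchain contradicts the ECC.
\item If $(A,B)\in\mathcal V$ and $(C,D)\in\Sigma_2$ are comparable, then $(A,B)\le(C,D)$ is impossible. Indeed it would give $\{x,y\}\subseteq D\subseteq B$, and the same applies with $(D,C)$ in place of $(C,D)$. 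So $(C,D)<(A,B)$.
\item Take any DCC ultrafilter $\mathcal W$ on $\Sigma_2$, for instance a vertex of $M_{\Sigma_2}$. Set $\mathcal U = \mathcal V\cup\mathcal W$.
\end{enumerate}

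It remains to check that $\mathcal U$ is a DCC ultrafilter on $\Sigma$. Condition \ref{p1} is clear. For \ref{p2}, step 5 rules out passing upward from $\mathcal V$ into $\Sigma_2$. Now suppose $s\in\mathcal W$, $s\le t$ and $t\in\Sigma_1$. If $t\notin\mathcal V$, then $t^r\in\mathcal V$ and $t^r\le s^r$ with $s^r \in \Sigma_2$, again contradicting step 5. For DCC, an infinite descending chain in $\mathcal U$ would have an infinite subchain in $\mathcal V$ or in $\mathcal W$, which is impossible.

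Finally, every $(A,B)\in\mathcal U$ has $\{x,y\}\subseteq A$: this holds for $\mathcal V$ by definition, and for $\mathcal W\subseteq\Sigma_2$ automatically. Hence $\{x,y\}\subseteq\beta(\mathcal U)$.
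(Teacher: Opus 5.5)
Your proof is correct. Your argument for \ref{m1} coincides with the paper's, but you take different routes for \ref{m2} and \ref{m3}.

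For \ref{m2}, the paper fixes the edge $uv$ and starts from a $u$-principal DCC ultrafilter $\mathcal U$. It flips exactly the set $Y$ of those $(A,B)\in\mathcal U$ with $v\in B\sm A$, and verifies upward closure by a case analysis. You instead build the ultrafilter from scratch, running the proof of \Cref{prop:x-principle} with the pair $\{x,y\}$ in place of a single vertex.

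Your route makes the DCC property immediate from the ECC and pigeonhole. The paper's appeal to ``finite symmetric difference with $\mathcal U$'' tacitly needs $Y$ to be finite, and that takes a further argument. Elements of $Y$ are pairwise comparable or crossing, so by Ramsey and crossing-finiteness an infinite $Y$ contains an infinite chain. Descending chains are then ruled out by DCC of $\mathcal U$, and ascending ones by the ECC at $v$. Your argument also works verbatim for any finite clique in place of $\{x,y\}$, which gives \Cref{prop:cliques-in-bags} for the dual decomposition without Helly.

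The paper's route yields more geometry and needs no auxiliary ultrafilter. Its $\mathcal W$ is the gate (nearest-point projection) of $\mathcal U$ onto $\beta^{-1}(v)$, so it shows this projection stays inside $\beta^{-1}(u)$. On the auxiliary ultrafilter: choosing $\mathcal W$ as ``a vertex of $M_{\Sigma_2}$'' presupposes $V(M_{\Sigma_2})\neq\emptyset$, as the paper itself does in \Cref{prop:x-principle}. It is cleaner to let $\mathcal W$ be the restriction to $\Sigma_2$ of the DCC ultrafilter you already have from \ref{m1}. Restricting to a symmetric subset preserves \ref{p1}, \ref{p2} and the DCC.

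For \ref{m3}, your interval bound $\mathcal X\subseteq\mathcal U\cup\mathcal V$ is equivalent to the paper's argument that labels along a geodesic are pairwise distinct. It uses only the distance formula rather than the bijection between labels and $\Theta$-classes.
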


\begin{proof}
We verify conditions \ref{m1}–\ref{m3} in turn, beginning with \ref{m1}.

\begin{clm}\label{cl.th.main1.MD1}
We have that
$\displaystyle V(G)=\bigcup_{\mathcal{U}\in V(M_\Sigma)}\beta(\mathcal{U})$.
\end{clm}

\begin{claimproof}
Let $x\in V(G)$ and choose any $x$-principal DCC ultrafilter $\Ucal$, which exists by \Cref{prop:x-principle}. In particular, $\Ucal$ is a vertex of $M_\Sigma$ by construction.
Then $x\in A$ for every $(A,B)\in\Ucal$, hence $x\in \beta(\Ucal)$. Since $x$ was arbitrary, we are done. 
\end{claimproof}

Next, we verify \ref{m2}.

\begin{clm}\label{cl.th.main1.MD2}
For all $uv\in E(G)$, there exists $\mathcal{W}\in V(M_\Sigma)$
such that $u,v\in \beta({\mathcal{W}})$.
\end{clm}

\begin{claimproof}
Let $uv\in E(G)$. By \Cref{cl.th.main1.MD1}, let $\Ucal \in V(M_\Sigma)$ be such that $u \in \beta(\Ucal)$. 
If $v \in A$ for all $\ab \in \Ucal$, then we are done.
Otherwise, define
\[
   Y
   \coloneqq\{(A,B)\in\mathcal{U} :  v\in B\sm A\},
   \qquad
   \mathcal{W}\coloneqq(\mathcal{U}\sm Y)\cup Y^{r},
\]
where $Y^{r}\coloneqq\{(B,A) : (A,B)\in Y\}$.
In what follows we prove that $\mathcal{W}$ is a vertex of $M_\Sigma$ and that 
$u,v\in \beta(\mathcal{W})$. First, we must verify that $\mathcal W$ is  an ultrafilter. 
Indeed, it is clear from the construction of $\mathcal W$ that \ref{p1} is satisfied, as for every $(A,B)\in\Sigma$, exactly one of $(A,B)$ or $(B,A)$ lies in $\mathcal{W}$.

Next we verify \ref{p2}, i.e. that $\mathcal W$ is closed upwards.
Assume for a contradiction that there exists $(A,B) \in \mathcal W$, $(C,D) \in \Sigma$ with $(A,B) < (C,D)$, but$(C,D)\notin\mathcal{W}$.  
By \ref{p1}, this implies $(D,C)\in\mathcal{W}$.  
We now distinguish two main cases.

Suppose first that $(D,C)\in Y^{r}$.
If $(A,B)\in Y^{r} $,  then $(B,A)\in Y$ and $(C,D)\in Y$, so $v\in A\sm B$ and $v\in D\sm C$.  
Because $(A,B)<(C,D)$ and $v\in D$, we would have $v\in B$, contradicting $v\in A\sm B$.
If instead $(A,B)\in\mathcal{U}\sm Y$, then since $(A,B)\in\mathcal{U}\sm Y$, we have $v\notin B\sm A$.  
From $\ab\le \cd $ it follows that $v\in C$.  
But $(D,C)\in Y^{r} $ implies $(C,D)\in Y$, and so $v\in D\sm C$, another contradiction.
Suppose instead that $(D,C)\in\mathcal{U}\sm Y$.
If $(A,B)\in Y^{r} $, then because $(D,C)\in\mathcal{U}\sm Y$, we have $v\notin C\sm D$.  
Since $\ab\le (C,D)$, we deduce $C\subseteq A$ and thus $v\in C$ which yields a contradiction.
If instead $(A,B)\in\mathcal{U}\sm Y$, then since $\mathcal{U}$ contains $(A,B)$ and $(A,B)\le \cd$, we conclude that $(C,D)\in\mathcal{U}$. But $(D,C)\in\mathcal{U}$ violates \ref{p1} since both orientations cannot coexist.
In every case above we reached a contradiction; hence $(C,D)\in\mathcal{W}$.
It follows that $\mathcal{W}$ is indeed an ultrafilter. 

It remains to check that for every $(A,B) \in \mathcal{W}$, 
we have $u,v \in A$. To this end, let
 $(A,B)\in \mathcal{W}$.
We distinguish the following two cases. First, suppose that $(A,B)\in \mathcal{U}\sm Y$. In this case, since $(A,B)\in \Ucal$, we have that $u\in A$.
Also $\ab\notin Y$ which implies that $v\in A$.
Thus we have $u,v \in A$.
Suppose instead that $(A,B)\in Y^{r}$. In this case, since $(B,A)\in Y$, we have that $v\in A\sm B$.
If $\ab\in \Ucal$, then $u\in A$, as $u\in \beta(\Ucal)$.
So we can assume that $\ba \in \Ucal $.
Since $u\in \beta(\Ucal)$, we infer that $u\in B$.
Given $u\in B$ and $v\in A\sm B$, we must have $u\in A\cap B$ to preserve the edge $uv$ in $G$.
Thus, we once again conclude that $u,v\in A$.

It follows that $\mathcal W$ is both $u$-principal and $v$-principal. Moreover, it is clear that $\mathcal W$ is DCC, since it has finite symmetric difference with $\mathcal U$. This implies that $\mathcal W$ is indeed a vertex of $M_\Sigma$ and $u,v \in \beta(\mathcal W)$. Whence the claim.
\end{claimproof}

Finally, we check that \ref{m3} holds.

\begin{clm}\label{cl.th.main1.MD3}
For all $v\in V(G)$, the fibre
$
   \beta^{-1}(v)
$
induces a convex subgraph of $M$.
\end{clm}

\begin{claimproof}
Assume without loss of generality that $|\beta^{-1}(v)|\ge2$.  
Let $\mathcal{U},\mathcal{V}\in \beta^{-1}(v)$ and let
$\gamma\coloneqq\mathcal{U}=\mathcal{U}_0,\dots,\mathcal{U}_k=\mathcal{V}$ be a geodesic in $M_\Sigma$.  
We will show $V(\gamma)\subseteq \beta^{-1}(v)$.
Suppose $\mathcal{U}_i\notin \beta^{-1}(v)$, where $i$ here is the minimal index where this happens.
Then $(A,B)\in\mathcal{U}_i$ for some $(A,B)$ with $v\notin A$.
Because labels along a geodesic are pairwise distinct (see \Cref{rem:distinct-labels-on-geodesics}), the same ordered separation
appears in $\mathcal{V}=\mathcal{U}_k$, contradicting $v\in \beta(\mathcal{V})$.
Hence no such $\mathcal{U}_i$ exists and so $\beta^{-1}(v)$ induces a convex subgraph of $M$.
\end{claimproof}

With~\Cref{cl.th.main1.MD1}–\Cref{cl.th.main1.MD3} established,
$(M_\Sigma,\beta)$ satisfies \ref{m1}–\ref{m3} and is therefore a
median–decomposition of~$G$.
\end{proof}

We now give a name to the object defined above, which is the main topic of interest in this paper. 

\begin{defn}[The dual median decomposition]
    Let $G$ be a graph and $\Sigma$ be a discrete, crossing-finite, ECC separation system. Let $(M_\Sigma, \beta)$ be the median decomposition constructed in \Cref{thm:median_decomposition}. Then $(M_\Sigma, \beta)$ is called the \defin{median decomposition dual to $\Sigma$}. 
\end{defn}

\begin{rem}
    Note that if $G $ is a $\Gamma$-graph for some group $\Gamma$, and $\Sigma$ is a $\Gamma$-invariant discrete symmetric separation system. 
    Then it is easy to verify that the dual median decomposition $(M_\Sigma, \beta)$ is $\Gamma$-canonical.
\end{rem}

\subsection{Duality and uniqueness}\label{sec:duality}

We now establish some key properties of the dual decomposition. 
Namely, we formulate and prove a version of Sageev--Roller duality for median decompositions. This is achieved in two halves.

\begin{lem}
    Let $G$ be a graph and $\Sigma$ a discrete, crossing-finite, ECC system of separations. Let $(M_\Sigma, \beta)$ be the dual median decomposition. Let $\overline \ell : \vec {\mathcal H}(M_\Sigma) \to \Sigma$ be the bijection described in \Cref{prop:labels-biject-theta-classes}. 
    Let $(A,B) \in \Sigma$, and $\mathfrak h \in \vec {\mathcal H}(M_\Sigma)$ satisfy $ \overline \ell(\mathfrak h) = (A,B)$. Then $(A_{\mathfrak h}, B_{\mathfrak h}) = (A,B)$. 
\end{lem}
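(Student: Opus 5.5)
We must show $A_{\mathfrak h} = A$; the equality $B_{\mathfrak h} = B$ then follows by applying the same argument to $\mathfrak h^r$, whose label is $(B,A)$, since $B_{\mathfrak h} = A_{\mathfrak h^r}$. First we identify the halfspace. Pick an edge $(\mathcal U_1,\mathcal U_2) \in \mathfrak h$ with label $(A,B)$, so that $(A,B) \in \mathcal U_1$ and $(B,A)\in\mathcal U_2$. We claim that $\mathfrak h_+ = \{\mathcal W \in V(M_\Sigma) : (A,B)\in\mathcal W\}$. By the distance formula (\Cref{prop:distance-formula}), $\mathcal W$ is closer to $\mathcal U_1$ than to $\mathcal U_2$ exactly when $\mathcal W$ agrees with $\mathcal U_1$ on the pair $\{(A,B),(B,A)\}$. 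Indeed, $|\mathcal W\triangle\mathcal U_2| = |\mathcal W\triangle\mathcal U_1| \pm 2$, and the sign is determined by that single pair. Combined with \Cref{prop:halfspaces-convex}, this gives the description of $\mathfrak h_+$.

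For the inclusion $A_{\mathfrak h}\subseteq A$: if $\mathcal W\in\mathfrak h_+$, then $(A,B)\in\mathcal W$, and so $\beta(\mathcal W)=\bigcap_{(C,D)\in\mathcal W} C\subseteq A$. Taking the union over $\mathfrak h_+$ gives $A_{\mathfrak h}\subseteq A$.

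For the reverse inclusion $A\subseteq A_{\mathfrak h}$: let $x\in A$. By \Cref{prop:x-principle}, applied with this $x$ and this $(A,B)$, there is a DCC $x$-principal ultrafilter $\mathcal U$ containing $(A,B)$. Then $\mathcal U$ is a vertex of $M_\Sigma$ lying in $\mathfrak h_+$, and $x\in\beta(\mathcal U)$ by $x$-principality. Hence $x\in A_{\mathfrak h}$.

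The only substantive ingredient is the existence of principal ultrafilters containing a prescribed separation, and this is exactly the "moreover" clause of \Cref{prop:x-principle}. The step needing the most care is the identification of $\mathfrak h_+$ via the labelling. It relies on the fact that all edges of $\mathfrak h$ carry the same label (\Cref{prop:labels-biject-theta-classes}) and on the distance computation above. Both are routine given the earlier results, so we do not expect a genuine obstacle.
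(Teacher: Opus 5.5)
Your proof is correct and follows the paper's argument. You get $A_{\mathfrak h}\subseteq A$ because $(A,B)\in\mathcal W$ for every $\mathcal W\in\mathfrak h_+$, the reverse inclusion from the ``moreover'' clause of \Cref{prop:x-principle}, and $B_{\mathfrak h}=B$ by passing to $\mathfrak h^r$. Your one addition is the explicit distance-formula justification of $\mathfrak h_+=\{\mathcal W : (A,B)\in\mathcal W\}$, which the paper uses without comment.
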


\begin{proof}
    We first show that $A_{\mathfrak h}\subseteq A$.
    For any $\mathcal{U}\in \mathfrak h_+$, we have $(A,B)\in\mathcal{U}$.  Since $\beta(\mathcal{U})
    =\bigcap_{\cd\in\mathcal{U}}C$,
    and $(A,B)\in\mathcal{U}$, it follows that $\beta(\mathcal{U})\subseteq A$.  Thus,
    \[
    A_{\mathfrak h}
    =\bigcup_{\mathcal{U}\in \mathfrak h_+}\beta(\mathcal{U})
    \subseteq A.
    \]
    We now show that $A \subset A_\mathfrak h$. Let $x \in A$. By \Cref{prop:x-principle}, there exists an $x$-principal DCC ultrafilter $\mathcal U$ on $\Sigma$, such that $(A,B) \in \mathcal U$. Note that $\mathcal U \in \mathfrak h_+$, and $\beta(\mathcal U) \ni x$. This implies that $x \in A_\mathfrak h$. Since $x \in A$ was arbitrary, we have shown the claimed inclusion.
    We therefore conclude that $A_\mathfrak h = A$ by double inclusion. An identical argument applied to $\mathfrak h^r$ gives  $B_\mathfrak h = B$, and we are done.
\end{proof}

Importantly, the above result tells us that the labelling function we defined on the edges of the dual median graph via minimal elements of ultrafilters in \Cref{sec:sageev} agrees with the labelling induced by the median decomposition and its dual separation system introduced in \Cref{sec:med-decomp}. This result can be stated more cleanly as follows.

\begin{thm}
    Let $G$ be a graph and $\Sigma$ a discrete, crossing-finite, ECC system of separations. Let $(M_\Sigma, \beta)$ be the dual median decomposition. Then $(M_\Sigma, \beta)$ is reduced. The set of separations dual to $(M_\Sigma,\beta)$ is exactly $\Sigma$. 
\end{thm}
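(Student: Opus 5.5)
This follows almost directly from the preceding lemma, combined with the bijection $\overline\ell : \vec{\mathcal H}(M_\Sigma) \to \Sigma$ of \Cref{prop:labels-biject-theta-classes}. The plan is to identify the map $\mathfrak h \mapsto (A_{\mathfrak h}, B_{\mathfrak h})$ with $\overline\ell$, and then read off the dual system and both conditions of \Cref{def:reduced}.

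First, by the preceding lemma, for every $\mathfrak h \in \vec{\mathcal H}(M_\Sigma)$ we have $(A_{\mathfrak h}, B_{\mathfrak h}) = \overline\ell(\mathfrak h)$. So the natural map $\vec{\mathcal H}(M_\Sigma) \to \Sigma_{M_\Sigma,\beta}$ coincides with $\overline\ell$. Since $\overline\ell$ is surjective onto $\Sigma$, the dual set is
$$
\Sigma_{M_\Sigma,\beta} = \{\overline\ell(\mathfrak h) : \mathfrak h \in \vec{\mathcal H}(M_\Sigma)\} = \Sigma .
$$

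Second, I verify the two reducedness conditions. For \ref{red1}, every induced separation $(A_{\mathfrak h}, B_{\mathfrak h})$ lies in $\Sigma$. A separation system consists by definition of essential separations, so each induced separation is essential. For \ref{red2}, the map $\mathfrak h \mapsto (A_{\mathfrak h}, B_{\mathfrak h})$ equals $\overline\ell$, which is injective by \Cref{prop:labels-biject-theta-classes}. Hence $(M_\Sigma, \beta)$ is reduced.

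The only point that needs care is that \Cref{prop:labels-biject-theta-classes} is stated for a pocset $P$, and here it is applied with $P = \Sigma$. This requires $\Sigma$ to be a discrete, crossing-finite pocset. Discreteness and crossing-finiteness are hypotheses of the statement. For the pocset axiom $s \not\leq s^r$: if $(A,B) \le (B,A)$, then $A \subseteq B$, so $B = V(G)$, contradicting essentiality of $(A,B)$. Beyond this check there is no real obstacle, since all the substantive work was done in the preceding lemma.
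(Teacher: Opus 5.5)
Your proof is correct and follows the paper's route. The paper gives no separate argument: it presents the theorem as a restatement of the preceding lemma ($(A_{\mathfrak h},B_{\mathfrak h})=\overline\ell(\mathfrak h)$) combined with the bijectivity of $\overline\ell$ from \Cref{prop:labels-biject-theta-classes}, which yields the dual system, \ref{red1} and \ref{red2} exactly as you derive them, and your extra check that $\Sigma$ satisfies the pocset axiom $s\not\le s^r$ is correct and is a detail the paper takes for granted.
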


In the other direction, duality is not quite as straightforward for median decompositions. For one, we saw in \Cref{exa:dual-seps-not-ECC} that the dual separation system to a median decomposition might not be discrete or ECC, and so we may not be able to form the dual median decomposition again. Even if we are given this, the other problem is that there will generally be many reduced median decompositions labelled by a given system of separations. However, there is a strong relationship between these median decompositions and the true dual, described by the following theorem.

\begin{thm}\label{prop:median-decomp-contains-the-dual}
    Let $G$ be a graph and let $\Sigma$ be a discrete, crossing-finite, ECC separation system. Let $(M,\alpha)$ be a reduced median decomposition such that its dual system of separations satisfies  $\Sigma_{M,\alpha} = \Sigma$. Let $(M_\Sigma, \beta)$ be the median decomposition dual to $\Sigma$. Then there is a median embedding $\varphi : M_\Sigma \to M$ such that $\beta \circ \varphi = \alpha$. 
    
    Moreover, suppose further that $(M,\alpha)$ is crossing-faithful. Then $\varphi$ is surjective and thus a graph isomorphism. 
\end{thm}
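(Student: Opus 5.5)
The plan is to realise both median graphs as graphs of DCC ultrafilters and compare the two pocsets involved. Note that the bag identity should read $\alpha\circ\varphi=\beta$ (as in the introduction), since $\varphi$ goes from $M_\Sigma$ to $M$.

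\emph{The pocset comparison.} Since $(M,\alpha)$ is reduced, the map $\psi:\vec{\mathcal H}(M)\to\Sigma$, $\mathfrak h\mapsto (A_{\mathfrak h},B_{\mathfrak h})$, is injective by \ref{red2} and surjective because $\Sigma_{M,\alpha}=\Sigma$. It commutes with reversal. By \Cref{prop:nest-cross-induced-seps}(1) it is order preserving: $\mathfrak h\le\mathfrak h'$ implies $\psi(\mathfrak h)\le\psi(\mathfrak h')$. Thus $\Sigma$ carries the same involution as $\vec{\mathcal H}(M)$ but a possibly stronger partial order.

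\emph{Constructing $\varphi$.} Let $\mathcal U$ be a DCC ultrafilter on $\Sigma$. I claim $\psi^{-1}(\mathcal U)$ is a DCC ultrafilter on $\vec{\mathcal H}(M)$.
\begin{enumerate}
\item Axiom \ref{p1} transfers directly, since $\psi$ commutes with reversal.
\item Axiom \ref{p2} holds because upward closure under the stronger order on $\Sigma$ implies upward closure under the weaker order on $\vec{\mathcal H}(M)$.
\item The DCC holds because a strictly descending chain in $\psi^{-1}(\mathcal U)$ maps, by injectivity and monotonicity, to a strictly descending chain in $\mathcal U$.
\end{enumerate}
By \Cref{rem:ultrafilters-on-hyperplanes}, $\psi^{-1}(\mathcal U)$ corresponds to a unique vertex of $M$; define $\varphi(\mathcal U)$ to be this vertex. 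The map $\varphi$ is injective because $\psi$ is a bijection.

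\emph{Embedding and bags.} In $M$, the distance between two vertices equals the number of $\Theta$-classes separating them (\Cref{thm:kakutani} applied to singletons). This is half the symmetric difference of the corresponding ultrafilters, which by \Cref{prop:distance-formula} equals the distance in $M_\Sigma$. Hence $\varphi$ is an isometric embedding. The median in $M_\Sigma$ is the majority ultrafilter $\mathcal C$ from \Cref{lem.it.is.median.graph}. The same majority description gives the median in $M$: the majority set is the unique vertex lying in every halfspace containing two of the three points. So $\varphi$ preserves medians and is a median embedding. For the bags, \Cref{prop:bags-are-intersections} gives
$$
\alpha(\varphi(\mathcal U))=\bigcap_{\mathfrak h:\ \varphi(\mathcal U)\in\mathfrak h_+}A_{\mathfrak h}=\bigcap_{(A,B)\in\mathcal U}A=\beta(\mathcal U).
$$

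\emph{Surjectivity under crossing-faithfulness.} Here I would show that $\psi$ is an isomorphism of pocsets, i.e.\ that $\psi(\mathfrak h)<\psi(\mathfrak h')$ forces $\mathfrak h<\mathfrak h'$. By \Cref{prop:nest-cross-induced-seps}(2), which applies since $(M,\alpha)$ is weakly reduced, either $\mathfrak h$ and $\mathfrak h'$ cross or $\mathfrak h_+\subset\mathfrak h'_+$. Crossing-faithfulness rules out the first option, because crossing $\Theta$-classes would induce crossing separations, contradicting $\psi(\mathfrak h)<\psi(\mathfrak h')$. Once the orders agree, the argument above runs in reverse. Each vertex $v$ of $M$ gives the DCC ultrafilter $\{\mathfrak h: v\in\mathfrak h_+\}$, whose image under $\psi$ is a DCC ultrafilter on $\Sigma$, i.e.\ a vertex of $M_\Sigma$ mapping to $v$. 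Therefore $\varphi$ is surjective, and being an isometric embedding, it is a graph isomorphism.

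The main obstacle is the bookkeeping about which order is stronger. One must check that upward closure and the DCC transfer in the correct direction: from $\Sigma$ to $\vec{\mathcal H}(M)$ in general, and in both directions only once crossing-faithfulness identifies the two orders.
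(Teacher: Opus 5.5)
Your proposal follows the paper's own proof step for step. You pull a DCC ultrafilter on $\Sigma$ back along the monotone, reversal-preserving bijection $\psi$, read off a vertex via \Cref{rem:ultrafilters-on-hyperplanes}, get the bags from \Cref{prop:bags-are-intersections}, and in the crossing-faithful case upgrade $\psi$ to a pocset isomorphism via \Cref{prop:nest-cross-induced-seps}(2). You are right that the bag identity should read $\alpha\circ\varphi=\beta$.

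\textbf{The gap.} One step does not hold up as written, and the paper's proof leans on it in the same way: the appeal to \Cref{rem:ultrafilters-on-hyperplanes}. That remark is only valid for crossing-finite median graphs. In the infinite cube $\bigoplus_{i\ge 1}\mathbb Z_2$ no two distinct halfspaces are comparable, so the ultrafilter choosing $\{y_i=1\}$ for every $i$ is DCC, yet its halfspaces have empty intersection.

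Nor is $M$ forced to be crossing-finite by the hypotheses. Take the infinite star with centre $c$ and leaves $\ell_1,\ell_2,\dots$, decomposed over that cube with $\alpha(e_i)=\{c,\ell_i\}$ and every other bag equal to $\{c\}$. This decomposition is reduced, and its dual system $\{(V(G)\setminus\{\ell_i\},\{c,\ell_i\})\}$ together with flips is nested, discrete and ECC. So you must prove that $\psi^{-1}(\mathcal U)$ is principal using more than its DCC property. In the crossing-faithful part there is no issue, since then $M$ inherits crossing-finiteness from $\Sigma$.

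\textbf{A way to close it.} Fix $w\in V(M)$, and put $\mathcal U_w=\{\mathfrak h: w\in\mathfrak h_+\}$ and $\mathcal V=\psi^{-1}(\mathcal U)$. It suffices to show that $\mathcal V\setminus\mathcal U_w$ is finite. A finite symmetric difference with $\mathcal U_w$ then yields the vertex by the usual flipping argument, or by Helly plus a closest-point argument.

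Suppose instead that $\mathfrak h^1,\mathfrak h^2,\dots$ are distinct elements of $\mathcal V\setminus\mathcal U_w$, and write $(A_i,B_i)=\psi(\mathfrak h^i)\in\mathcal U$. Every $v\in V(M)$ is at finite distance from $w$, so it lies in $\mathfrak h^i_-$ for cofinitely many $i$. Hence $\alpha(v)\subseteq B_i$ for cofinitely many $i$, and by \ref{m1} every vertex of $G$ lies in cofinitely many $B_i$.

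Now apply Ramsey's theorem to the pairs $(A_i,B_i)$. Infinitely many pairwise crossing elements are excluded by crossing-finiteness. A strictly descending sequence is excluded by the DCC of $\mathcal U$. Relations of the form $(A_i,B_i)\le(B_j,A_j)$ are excluded by \ref{p2}. So some subsequence, with first index $i_0$, has one of two forms.
\begin{enumerate}
\item It is strictly ascending. Then every vertex lies in some $B_i\subseteq B_{i_0}$, so $B_{i_0}=V(G)$, contradicting \ref{red1}.
\item It satisfies $(B_i,A_i)\le(A_j,B_j)$ for all $i\ne j$. Then $B_i\subseteq A_{i_0}$ for $i\neq i_0$. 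A vertex of $B_{i_0}\setminus A_{i_0}$, which exists by \ref{red1}, therefore lies in no other $B_i$, contradicting the cofiniteness above.
\end{enumerate}
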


\begin{proof}
    Since $(M,\alpha)$ is reduced, we have natural bijection $g : \vec {\mathcal H}(M) \to \Sigma$, which is necessarily monotone and reversal-preserving (though not necessarily an isomorphism of pocsets). Define $\varphi$ as follows. Given a DCC ultrafilter $\mathcal U \in V(M_\Sigma)$, consider $g^{-1}(\mathcal U)$. This is a DCC ultrafilter on $\vec {\mathcal H}(M)$, and thus the intersection
    $$
    I = \bigcap_{\mathfrak h \in g^{-1}(\mathcal U)} \mathfrak h_+
    $$
    contains exactly one element, say $I = \{v\}$, as remarked in \Cref{rem:ultrafilters-on-hyperplanes}. We therefore set $\varphi(\mathcal U) := v$. Recall from the proof of \Cref{lem.it.is.median.graph} that the unique median of three DCC ultrafilters $\mathcal U$, $\mathcal V$, $\mathcal W \in V(M_\Sigma)$ is given by
$$
(\mathcal{U}\cap\mathcal{V})
\;\cup\;
(\mathcal{U}\cap\mathcal{W})
\;\cup\;
(\mathcal{V}\cap\mathcal{W}).
$$
In particular, since $g^{-1}$ clearly respects intersections and unions, we have that $\varphi$ preserves medians. Since $g$ is surjective, we have that $\varphi$ is injective \cite[Prop.~7.8(ii)]{roller-thesis}. Moreover, it is also clear that adjacency is preserved, since two ultrafilters define point to adjacent vertices exactly when their symmetric difference has size 2.

    The fact that $\beta \circ \varphi = \alpha$ follows immediately from the construction of $(M_\Sigma, \beta)$ and \Cref{prop:bags-are-intersections}. 

    Finally, suppose that $(M,\alpha)$ is crossing-faithful. Then the map $g$ is an isomorphism of pocsets, and so any DCC ultrafilter on $\vec {\mathcal H}(M)$ is labelled by a DCC ultrafilter on $\Sigma$. In particular, since we have a natural bijection between DCC ultrafilters on $\vec {\mathcal H}(M)$ and $V(M)$, this means that $\varphi$ is surjective.
\end{proof}

It follows from \Cref{prop:median-decomp-contains-the-dual} that the dual median decomposition is always crossing-faithful. 
Even better, we actually deduce the following strong uniqueness statement.

\begin{cor}[Uniqueness of the dual median decomposition]\label{cor:uniqueness}
    Let $G$ be a connected graph and let $\Sigma$ be a discrete, crossing-finite, ECC system of separations. Then the dual median decomposition $(M_\Sigma, \beta)$ is the \textbf{unique} reduced, crossing-faithful median decomposition with dual system of separations equal to $\Sigma$.
\end{cor}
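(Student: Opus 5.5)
The corollary has two halves: existence (the dual decomposition qualifies) and uniqueness (any other qualifying decomposition is isomorphic to it). Both follow from the two preceding results, so the plan is to assemble them.

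\emph{Existence.} By the theorem preceding \Cref{prop:median-decomp-contains-the-dual}, $(M_\Sigma,\beta)$ is reduced and $\Sigma_{M_\Sigma,\beta}=\Sigma$. It remains to show that it is crossing-faithful. Suppose $\mathfrak h,\mathfrak h'\in\vec{\mathcal H}(M_\Sigma)$ cross. Under the bijection $\overline\ell$ of \Cref{prop:labels-biject-theta-classes}, and by the lemma identifying $(A_{\mathfrak h},B_{\mathfrak h})$ with $\overline\ell(\mathfrak h)$, we must show that $\overline\ell(\mathfrak h)$ and $\overline\ell(\mathfrak h')$ cross in $\Sigma$.

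Argue by contraposition. Suppose, say, $s=\overline\ell(\mathfrak h)\le t=\overline\ell(\mathfrak h')$ (the cases involving reversals are symmetric). Then every ultrafilter containing $s$ contains $t$ by \ref{p2}. Since $\mathfrak h_+$ consists exactly of the DCC ultrafilters containing $s$, this gives $\mathfrak h_+\subseteq\mathfrak h'_+$. Hence $\mathfrak h_+\cap\mathfrak h'_-=\emptyset$, contradicting that $\mathfrak h$ and $\mathfrak h'$ cross.

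The one point needing care is the identification $\mathfrak h_+=\{\mathcal U : s\in\mathcal U\}$. To justify it, note that crossing an edge labelled $s$ flips exactly $s$, and a geodesic crosses each $\Theta$-class at most once (\Cref{rem:distinct-labels-on-geodesics}). Therefore membership of $s$ is constant on each side of the $\Theta$-class and differs across it.

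Alternatively, one could apply \Cref{prop:median-decomp-contains-the-dual} with $(M,\alpha)=(M_\Sigma,\beta)$ and use the remark after it. I prefer the direct argument above, since it avoids circularity.

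\emph{Uniqueness.} Let $(M,\alpha)$ be any reduced, crossing-faithful median decomposition with $\Sigma_{M,\alpha}=\Sigma$. By \Cref{prop:median-decomp-contains-the-dual}, there is a median embedding $\varphi:M_\Sigma\to M$ compatible with the bags. Crossing-faithfulness of $(M,\alpha)$ makes $\varphi$ surjective, hence a graph isomorphism intertwining $\beta$ and $\alpha$. So $(M,\alpha)\cong(M_\Sigma,\beta)$ as median decompositions, which is the intended meaning of uniqueness.

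The main obstacle is checking that crossing-faithfulness really upgrades the monotone bijection $g:\vec{\mathcal H}(M)\to\Sigma$ to a pocset isomorphism, i.e.\ that $g$ reflects order. I would derive this from \Cref{prop:nest-cross-induced-seps}(2). Suppose $g(\mathfrak h)<g(\mathfrak h')$. Since $(M,\alpha)$ is weakly reduced, either $\mathfrak h_+\subset\mathfrak h'_+$ or $\mathfrak h$ and $\mathfrak h'$ cross. Crossing is excluded by crossing-faithfulness, because the induced separations are comparable and hence nested. Thus $\mathfrak h\le\mathfrak h'$, and $g$ reflects order. DCC ultrafilters then correspond under $g$, which gives surjectivity of $\varphi$.
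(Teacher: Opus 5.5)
Your proposal is correct. The uniqueness half follows the paper's route exactly, since the paper treats the corollary as an immediate consequence of \Cref{prop:median-decomp-contains-the-dual}. Your order-reflection argument via \Cref{prop:nest-cross-induced-seps}(2), together with \ref{red2} for equal labels and $g(\mathfrak h^r)=g(\mathfrak h)^r$, supplies the step the paper only asserts when it says crossing-faithfulness makes $g$ a pocset isomorphism.

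Where you differ is the existence half. The paper says crossing-faithfulness of $(M_\Sigma,\beta)$ ``follows from'' \Cref{prop:median-decomp-contains-the-dual}. You instead prove it directly from $\mathfrak h_+=\{\mathcal U : \overline\ell(\mathfrak h)\in\mathcal U\}$ and \ref{p2}. Your route is the better one, because the surjectivity clause of that theorem assumes crossing-faithfulness, so applying it to $(M_\Sigma,\beta)$ itself would be circular. Your argument shows that $\overline\ell$ reflects the order, and hence, with \Cref{prop:nest-cross-induced-seps}(1), is a pocset isomorphism; this is presumably what the paper has in mind. Your justification of the halfspace identification is fine. A slightly cleaner version: membership of $s$ changes only across edges labelled $s$ or $s^r$. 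By \Cref{prop:labels-biject-theta-classes} these are exactly the edges of the $\Theta$-class of $\mathfrak h$, and each halfspace is connected.

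One point to make explicit in your last step concerns the correspondence in \Cref{rem:ultrafilters-on-hyperplanes} between vertices of $M$ and DCC ultrafilters on $\vec{\mathcal H}(M)$. It genuinely needs $M$ to be crossing-finite. In the infinite cube all $\Theta$-classes pairwise cross, so choosing, for every coordinate, the halfspace of vectors with a $1$ there gives a DCC ultrafilter whose halfspaces have empty intersection. In your situation this is automatic: $g$ is a pocset isomorphism onto the crossing-finite $\Sigma$, so $M$ is crossing-finite and your surjectivity argument for $\varphi$ goes through.
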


\begin{exa}
    We return to the reduced median decomposition $(M,\beta)$ of the path graph of length 3, $G$, seen in \Cref{exa:not-crossing-faithful}. The dual system of separations $\Sigma$ to this median decomposition is given by 
    \begin{align*}
    \Sigma = \Big\{ \ &\big( \{1,2\},\{2,3,4\}\big), \big(\{2,3,4\},\{1,2\}\big), \\
    &\big(\{1,2,3\},\{3,4\}\big), \big(\{3,4\},\{1,2,3\}\big) \ \Big\}.
    \end{align*}
    This is nested, and so the dual median decomposition  is in fact a tree decomposition $(T_\Sigma, \alpha)$. This is depicted in \Cref{fig:path-exa-of-subdecomp}, along with the embedding of this tree decomposition inside of $(M,\beta)$.

  \begin{figure}[h]
        \centering
        \subfloat[\centering $(T_{\Sigma},\alpha)$]{{\includegraphics[scale=0.7]{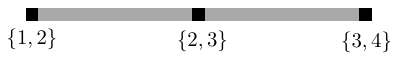}}}
        \qquad
        \subfloat[\centering $(M,\beta)$.]{{\includegraphics[scale=0.7]{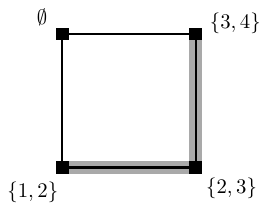} }}%
        \caption{}
        \label{fig:path-exa-of-subdecomp}
    \end{figure}
\end{exa}


\subsection{Examples}

We now run through some instructive examples.

\begin{exa}[Structure trees]
As we remarked in the introduction, it is a classical fact, due to Dunwoody, that a \emph{nested} discrete system of separations gives rise to a \defin{structure tree}; see \cite{connectivity,MR4316013}.  
We now verify that, in the case where the separation system is nested, the dual median graph construction recovers the classical structure tree. This is a well-established phenomenon, and so we only include a sketch of the details for demonstrative purposes. 

For completeness, we briefly recall how to construct a tree from a nested set of separations.
Fix a nested, discrete, ECC separation system $\Sigma$ of $G$.
Define an equivalence relation~$\sim$ on~$\Sigma$ by
\[
(A,B)\sim(C,D)\;:\Longleftrightarrow\;
\begin{cases}
(A,B)=(C,D),\\[2pt]
\text{or $(B,A)$ is a predecessor of $(C,D)$ in $(\Sigma,\le)$}. 
\end{cases}
\]
(The order $\le$ is the usual partial order on separations; a
\defin{predecessor} is a maximal element below $(C,D)$.)
For $(A,B)\in\Sigma$ write $[(A,B)]$ for its equivalence class, and let
$T_{\Sigma}$ be the graph whose vertices are these classes.  
Join $[(A,B)]$ to $[(B,A)]$ with an oriented edge labelled by 
$(A,B)$. Of course, the reverse orientation is labelled by $(B,A)$. 
We now show that there is a natural label-preserving graph isomorphism between $M_\Sigma$ and $T_{\Sigma}$.


First, we claim that, given $\mathcal U \in V(M_\Sigma)$, 
if $\ab, \cd \in \Ucal$ are both minimal elements then $[\ab]=[\cd]$. 
Indeed, since $\Sigma$ is nested, $\ab$ is comparable with either $\cd$ or $\dc$.  
Hence, one of the following cases must occur:
\[
\dc \le \ab, \quad \cd \le \ab, \quad \ab \le \cd, \quad \text{or} \quad \ab \le \dc.
\]
Because $\ab$ and $\cd$ are both minimal in $\Ucal$, it follows that either $\dc \le \ab$ or $\ab \le \dc$.  
Assume that $\ab \le \dc$.  
By~\ref{p1}, this implies that $\dc \in \Ucal$, contradicting the assumption that $\cd \in \Ucal$.  
Therefore, we must have $\dc \le \ab$.
Now assume that there exists a separation $(E,F) \in \Sigma$ such that $\dc \le (E,F) \le \ab$.  
Since $\ab$ is minimal in $\Ucal$ and $(E,F) \le \ab$, we deduce that $(E,F) \notin \Ucal$, and consequently, $(F,E) \in \Ucal$.  
Moreover, because $(D,C) \le (E,F)$, we have $(F,E) \le (C,D)$.  
This contradicts the assumption that $\cd$ is minimal in $\Ucal$.  
Hence, $\dc$ is a predecessor of $\ab$, which implies that $[\ab] = [\cd]$, as desired.




We can now define the map $\Phi\colon V(M_\Sigma) \to V(T_{\Sigma})$. 
Let $\Ucal $ be a vertex of $M_\Sigma$.
Consider the minimal elements in $\Ucal$. 
We know by the above that all of them belong to the same class of equivalence relation under $\sim$.
This defines a unique vertex of $T_{\Sigma}$, which we designate as $\Phi(\mathcal U)$. 
Let $\Ucal, \Vcal \in V(M_\Sigma)$ be adjacent. Then there exists a minimal separation $s \in \Ucal$ such that  
$\Vcal = (\Ucal \sm \{s\}) \cup \{s^r\}$, labelling the edge from $\mathcal U$ to $\mathcal V$. 
By definition,  
$\Phi(\Ucal) = [s]$  and $ \Phi(\Vcal) = [s^r]$.
By the construction of $T_\Sigma$, there is an edge between $[s]$ and $[s^r]$ labelled by $s$.  Hence, $\Phi$ preserves edges and their labels, and so in particular is a morphism of graphs. 
It is easy to see that $\Phi$ maps neighbourhoods of vertices injectively. Since $T_\Sigma$ is a tree, it follows that $\Phi$ is injective. 

Finally, we check that $\Phi$ is surjective. In particular, we claim that given  $\ab\in \Sigma$, there is a vertex $\Ucal$ such that $\ab$ is minimal in $\Ucal$. 
By \Cref{prop:x-principle}, there exists an $x$-principal DCC ultrafilter $\Ucal$ such that $(A,B) \in \Ucal$.  
Note that there are only finitely many separations $(C,D) \in \Ucal$ with $(C,D) \leq (A,B)$, since $\Sigma$ is discrete.
Let  $\mathcal{A} = \{(C,D) \in \Ucal  :  (C,D) \leq (A,B)\}$.
Take a minimal element $(C,D)$ of $\mathcal{A}$. Then $(C,D)$ is also minimal in $\Ucal$.  
By \Cref{lem:flipping-elements}, the set  
$\Ucal' = \Ucal \sm \{(C,D), (D,C)\}$  
is an ultrafilter.  
Since $\mathcal{A}$ is finite, after finitely many steps we end up with an ultrafilter in which $(A,B)$ is a minimal element of $\Ucal$.  
This proves the claim.
This easily implies that $\Phi$ is surjective.
Indeed, take an arbitrary vertex $[\ab]$ of $T_{\Sigma}$.
By the above, there exists ultrafilter $\Ucal$ such that $\Ucal$ has $\ab$ as a minimal element.
One can see that $\Phi(\Ucal)=[\ab]$.

\end{exa}


\begin{exa} 
Consider the following graph $G$, with vertex set $V(G) = \{1,2,3,4,5\}$ depicted in \Cref{fig:exa1}. 
Let 
\begin{align*}
&s_1 = ( \{1,2,3\},\{1,3,4,5\}),  &s_2 = ( \{1,2,4\},\{2,3,4,5\}), \\ &s_3 = ( \{1,2,3,4\},\{3,4,5\}).
\end{align*}
Consider the separation system
    $\Sigma = \{ s_1,s_2,s_3, s_1^r,s_2^r,s_3^r, \}$.
We have that $s_1$ and $s_2$ cross, but $s_3$ is nested with respect to both $s_1$ and $s_2$. The median decomposition dual to $\Sigma$ is also depicted in \Cref{fig:exa1}.
  \begin{figure}[h]
        \centering
        \subfloat[\centering The graph $G$.]{{\includegraphics[scale=0.7]{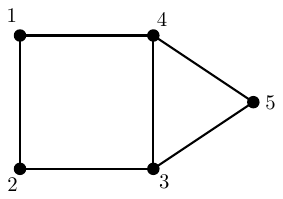}}}
        \qquad
        \subfloat[\centering The median decomposition dual to $\Sigma$.]{{\includegraphics[scale=0.6]{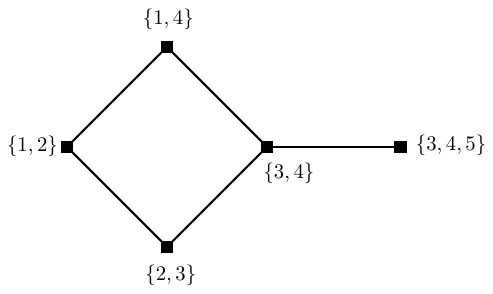} }}%
        \caption{}
    \label{fig:exa1}
\end{figure}
\end{exa}

\begin{exa}\label{exa:inf_grid}
The next example is about the infinite square grid $G$.
Let $\ab$ be the separation obtained by a vertical double ray, see \Cref{fig:exa-median_2} (a).
Then let $\Sigma$ be the orbit of $\ab$ under the automorphism group of $G$. This is easily seen to be a discrete, crossing-finite, ECC system of separations. The median graph $M_\Sigma$ dual to $\Sigma$ is also an infinite square grid. 
\Cref{fig:exa-median_2} (b) indicates the corresponding dual median decomposition.
\begin{figure}[h]
\centering
\subfloat[\centering The separation $\ab$ of the grid]{{\includegraphics[scale=0.8]{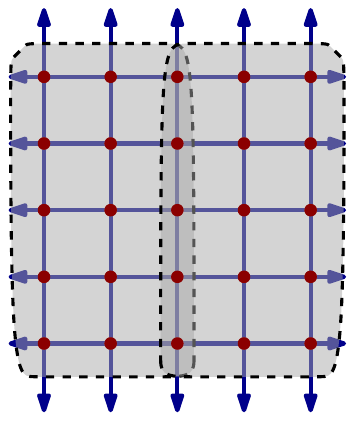}}}
\qquad
\subfloat[\centering The dual median graph with a bag of the dual of median decomposition $(M_\Sigma, \beta)$.]{{\includegraphics[scale=0.8]{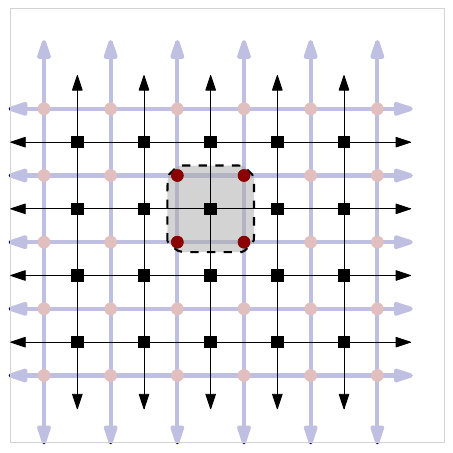} }}%
\caption{}\label{fig:exa-median_2}

\end{figure}

\end{exa}

\begin{exa}
let $G$ be the triangle augmented cuboctahedral graph. Consider the separation $(A,B)$ depicted in \Cref{fig:exa-cuboc}(a). Let $\Sigma$ denote the orbit of $(A,B)$ under the automorphism group of $G$. This is a separation system consisting of exactly 6 separations (and thus 3 up to orientation reversal). Ignoring reversals, these separations all pairwise cross. The dual median graph $M_\Sigma$ is a cube, and the dual median decomposition is also depicted in \Cref{fig:exa-cuboc}(b).

\begin{figure}[h]
\centering
\subfloat[\centering The separation $(A,B)$.]{{\includegraphics[scale=0.6]{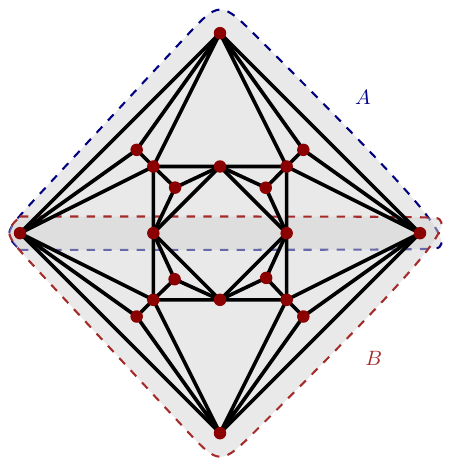}}}
\qquad
\subfloat[\centering The dual median graph $M_\Sigma$.]{{\includegraphics[scale=0.8]{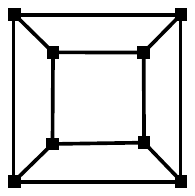} }}%
\quad
\subfloat[\centering The dual median decomposition $(M_\Sigma,\beta)$.]{{\includegraphics[scale=0.6]{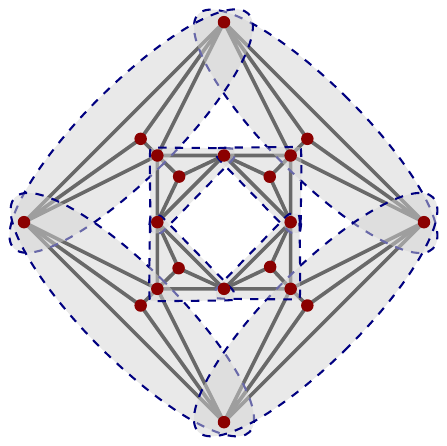}}}
    \label{fig:exa-cuboc}
    \caption{The triangle-augmented cuboctahedral graph.}
\end{figure}
\end{exa}

\section{Stavropoulos' characterisation of clique number}\label{sec:clique}

In this section apply our machinery for constructing median decompositions, and extend a theorem of Stavropoulos. In \cite{stavropoulos2015medianwidth}, Stavropoulos shows that the median-width of a finite graph is exactly its clique number. The goal of this section is to give a short proof of this fact using the machinery constructed above, which extends to \emph{all} graphs. 

\begin{prop}\label{prop:cliques-in-bags}
    Let $G$ be a graph and $(M,\beta)$ a median decomposition. Then $S = \{v_1, \ldots, v_n\} \subset V(G)$ span a finite clique in $G$. Then there exists $u \in V(M)$ such that $S \subset \beta(u)$.
\end{prop}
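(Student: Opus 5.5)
The plan is to apply the Helly property for median graphs (\Cref{thm:helly}) to the fibres of the clique vertices. For each $i \in [n]$, the fibre $C_i := \beta^{-1}(v_i)$ induces a convex subgraph of $M$ by \ref{m3}, and it is non-empty by \ref{m1}. This gives a finite family $C_1, \ldots, C_n$ of convex subgraphs of $M$. If they have a common vertex $u$, then $v_i \in \beta(u)$ for every $i$, so $S \subset \beta(u)$, which is the claim.

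To invoke \Cref{thm:helly}, I need to check that the $C_i$ intersect pairwise. If $i = j$, this is just non-emptiness of $C_i$, which holds by \ref{m1}. If $i \neq j$, then $v_i v_j \in E(G)$ because $S$ spans a clique. By \ref{m2}, there is a node $w \in V(M)$ with $\{v_i, v_j\} \subset \beta(w)$, and hence $w \in C_i \cap C_j$. The Helly property then gives $\bigcap_{i=1}^n C_i \neq \emptyset$, and any $u$ in this intersection works.

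There is essentially no obstacle here. The only point to note is that the Helly property is stated for finitely many convex subgraphs, which is exactly why the clique must be assumed finite; for infinite cliques the argument would not go through as stated, and the result is not claimed in that case. I would also remark in passing that an empty $S$ is trivial, since any node $u$ works.
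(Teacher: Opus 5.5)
Your proof is correct and is the paper's argument: the fibres are convex by \ref{m3}, intersect pairwise by \ref{m2}, and \Cref{thm:helly} gives a common node $u$. Your use of \ref{m1} for the case $i=j$ (which matters only when $n=1$) spells out a point the paper leaves implicit.
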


\begin{proof}
    Let $F_i = \beta^{-1}(v_i)$. By \ref{m3}, each is a convex subset of $M$. By \ref{m2}, $F_i \cap F_j \neq \emptyset$ for all $i, j$. By \Cref{thm:helly}, we have that $\bigcap_i F_i \neq \emptyset$. Let $u$ lie in this intersection, then $S \subset \beta(u)$. 
\end{proof}

Compare the above with the analogous statement for tree decompositions \cite[Cor.~12.3.5]{diestel}.

\begin{thm}\label{thm:cliquebags}
Let $G$ be a graph which contains no infinite clique.
Then there exists a median-decomposition $(M,\beta)$ of $G$ such that every bag is a finite clique.
\end{thm}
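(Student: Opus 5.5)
The idea is to choose a separation system $\Sigma$ whose dual median decomposition from \Cref{thm:median_decomposition} automatically has clique bags. For each vertex $x \in V(G)$, let $N[x]$ denote its closed neighbourhood, and set
$$
s_x := \bigl(N[x],\, V(G)\sm\{x\}\bigr).
$$
This is a separation: $N[x] \cup (V(G)\sm\{x\}) = V(G)$, the set $A\sm B$ is $\{x\}$, the set $B\sm A$ is $V(G)\sm N[x]$, and no edge joins $x$ to a non-neighbour. It is essential exactly when $x$ is not universal. Let $U$ be the set of universal vertices, and take $\Sigma := \{s_x, s_x^r : x \in V(G)\sm U\}$. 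If $\Sigma = \emptyset$, then $G$ is complete, hence a finite clique, and the one-vertex median graph with a single bag $V(G)$ suffices. So assume $\Sigma \neq \emptyset$.

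Next I would compute the order on $\Sigma$ by unwinding the definitions.
\begin{itemize}
\item $s_x \le s_y$ forces $V(G)\sm\{y\} \subseteq V(G)\sm\{x\}$, hence $x = y$.
\item $s_x \le s_y^r$ holds iff $x \neq y$ and $xy \notin E(G)$.
\item $s_x^r \le s_y$ would require $V(G)\sm\{x\} \subseteq N[y]$ and $V(G)\sm\{y\} \subseteq N[x]$. Together with $x,y$ non-universal, this is impossible unless $x = y$, which is excluded because it would give $s_x^r \le s_x$.
\end{itemize}
Consequently, distinct non-adjacent $x,y$ give nested separations, while adjacent $x,y$ give crossing separations. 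A pairwise-crossing subset of $\Sigma$ therefore comes from a clique of $G$, so it is finite by hypothesis, and $\Sigma$ is crossing-finite. Every strict chain in $\Sigma$ has length at most $2$ (of the form $s_x < s_y^r$). Hence every interval is finite, so $\Sigma$ is discrete, and there are no infinite descending chains, so the ECC holds vacuously. \Cref{thm:median_decomposition} then yields the dual median decomposition $(M_\Sigma,\beta)$.

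It remains to analyse the bags. Let $\mathcal U \in V(M_\Sigma)$ and set $X := \{x : s_x \in \mathcal U\}$, so that $s_y^r \in \mathcal U$ for every non-universal $y \notin X$. Then
$$
\beta(\mathcal U) = \bigcap_{x\in X} N[x] \ \cap \bigcap_{y \notin X\cup U} \bigl(V(G)\sm\{y\}\bigr) \subseteq U \cup X.
$$
Moreover, any $z \in \beta(\mathcal U)$ lies in $N[x]$ for every $x \in X$. So any two vertices of $\beta(\mathcal U)\cap X$ are adjacent, and vertices of $U$ are adjacent to everything. Hence $\beta(\mathcal U)$ is a clique, and it is finite since $G$ has no infinite clique.

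The main obstacle I anticipate is choosing $\Sigma$ so that crossing-finiteness holds. The more naive system indexed by non-edges, $(V(G)\sm\{y\}, V(G)\sm\{x\})$, would also give clique bags, but it is an infinite antichain. The vertex-indexed system $s_x$ is designed so that crossing corresponds exactly to adjacency, which is where the no-infinite-clique hypothesis enters. The remaining order computations are routine and only need to be checked carefully.
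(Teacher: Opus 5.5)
Your proposal is correct and is essentially the paper's proof. You use the same system of star separations $(N[x],V(G)\setminus\{x\})$ over non-universal vertices, the same crossing-finiteness argument (``crossing forces adjacency''), and the same bag argument: for $uv\notin E(G)$, whichever orientation of $s_v$ lies in $\mathcal U$ excludes either $u$ or $v$ from the bag. Your version additionally supplies the order computations and the all-universal case, which the paper leaves implicit.

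One small slip is in your third bullet. The relation $s_x^r\le s_y$ can hold with $x\neq y$, namely when $N[x]=V(G)\setminus\{y\}$ and $N[y]=V(G)\setminus\{x\}$. In that case $s_x^r=s_y$, so the relation is an equality, and your conclusions about nesting, crossing, and strict chains of length at most $2$ are unaffected.
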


\begin{proof}
We introduce some notation. Give $u \in V(G)$, 
The \defin{star separation} defined by $u$ is $(A_u, B_u) := (N[u],V(G)\sm \{u\})$.
We denote the symmetric set of all star separations with their flips by $\Sigma_{\mathsf{star}}$, i.e. 
$$
\Sigma_{\mathsf{star}}=\bigl\{\, (A_u, B_u),\;(B_u, A_u)  :  u\in V(G) , N[u] \neq V(G)\bigr\}.
$$
Observe that we do not include separations associated to vertices which neighbour every other vertex in the graph. This is to filter out non-essential separations.
It is then clear that $\Sigma:= \Sigma_{\mathsf{star}}$ is a discrete ECC separation system. Moreover, since $G$ does not contain an infinite clique, we have that $\Sigma_{\mathsf{star}}$ is crossing-finite. Indeed, let $u,v \in V(G)$ and suppose that the separations $(A_u,B_u)$ and $(A_v, B_v)$ cross. Then it is easy to see that $u$ and $v$ must be adjacent. In particular, if $\Sigma$ is not crossing-finite then $G$ contains an infinite clique. 

    Consider the dual median decomposition $(M_\Sigma,\beta)$. We claim that every bag in this decomposition is a clique. Let $u,v \in V(G)$ be not adjacent. Then $A_v \not\ni u$ and $B_v \not\ni v$. This implies that $(A_v, B_v), (A_u, B_u) \in \Sigma$. In particular, for any DCC ultrafilter $\mathcal U$ on $\Sigma$, we have that 
    $$
    \{u,v\} \not\subset \bigcap_{(A,B) \in \mathcal U} A = \beta(\mathcal U).
    $$
    Thus, vertices in the same bag are pairwise adjacent, and thus bags span cliques.
\end{proof}



The following corollary is now immediate.

\begin{cor}[cf. {\cite[Theorem 5.12]{stavropoulos2016graph}}]\label{cor:clique-med-decomp}
Let $G$ be a graph. 
Then $\mathsf{mw}(G)=\omega(G)$.
\end{cor}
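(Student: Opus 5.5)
We prove the two inequalities $\operatorname{mw}(G)\ge\omega(G)$ and $\operatorname{mw}(G)\le\omega(G)$ separately, interpreting both sides in $\mathbb{N}\cup\{\infty\}$ (with $\omega(G)=\infty$ when $G$ has arbitrarily large finite cliques or an infinite clique).

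For the lower bound, let $(M,\beta)$ be any median decomposition of $G$ and let $S$ be any finite clique of $G$. By \Cref{prop:cliques-in-bags}, some bag $\beta(u)$ contains $S$, so $\mathrm{width}(M,\beta)\ge|S|$. Taking the supremum over finite cliques gives $\mathrm{width}(M,\beta)\ge\omega(G)$; this also covers the case where $G$ contains an infinite clique, since such a clique contains finite cliques of every size. Taking the infimum over all decompositions then yields $\operatorname{mw}(G)\ge\omega(G)$.

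For the upper bound, if $\omega(G)=\infty$ there is nothing to prove, so assume $\omega(G)<\infty$. In particular $G$ has no infinite clique, and \Cref{thm:cliquebags} supplies a median decomposition $(M_\Sigma,\beta)$ in which every bag spans a clique. Each bag therefore has size at most $\omega(G)$, so $\mathrm{width}(M_\Sigma,\beta)\le\omega(G)$ and hence $\operatorname{mw}(G)\le\omega(G)$.

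The only step needing care is the infinite case. If $\omega(G)=\infty$ but $G$ has no infinite clique, \Cref{thm:cliquebags} still applies, but its bags can be arbitrarily large, so it gives nothing useful; the lower bound alone already forces $\operatorname{mw}(G)=\infty$. Degenerate conventions (for instance, empty $G$) are handled by the same conventions on both sides. Beyond this bookkeeping, no step is a genuine obstacle: all the substantive work lies in \Cref{thm:cliquebags}, which is established just before this statement.
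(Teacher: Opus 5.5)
Your proof is correct and is the paper's own argument, which it leaves as ``immediate'': the lower bound comes from \Cref{prop:cliques-in-bags} (via the Helly property) and the upper bound from the star-separation decomposition of \Cref{thm:cliquebags}. Your convention of reading both sides in $\mathbb{N}\cup\{\infty\}$ is more than bookkeeping. With cardinal-valued widths the identity would fail for uncountable cliques: $K_{\aleph_1}$ has a median decomposition over the median graph of finite subsets of $\omega_1$ (adjacent when they differ in one element) with $\beta(S)=S$, so every bag is finite.
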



\begin{exa}
Let $G$ be the cycle of length 5, with vertices $V(G) = \{1,2,3,4,5\}$.
We have 
$$
\Sigma_{\mathrm{star}}=\{\,s_1,s_2,s_3,s_4,s_5,\ s_1^{r},s_2^{r},s_3^{r},s_4^{r},s_5^{r}\,\},
$$
where
\begin{align*}
    &s_1=(\{1,2,5\},\{2,3,4,5\}),
&s_2=(\{1,2,3\},\{1,3,4,5\}),\\
&s_3=(\{2,3,4\},\{1,2,4,5\}),
&s_4=(\{3,4,5\},\{1,2,3,5\}),\\
&s_5=(\{1,4,5\},\{1,2,3,4\}).
\end{align*}
The dual median decomposition to this system of separations is depicted in \Cref{fig:exa-median}.
\begin{figure}[H]
\includegraphics[scale=0.7]{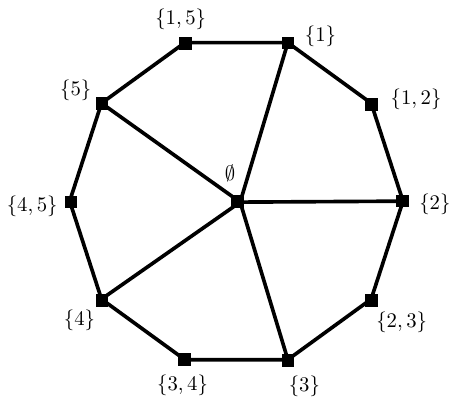} 

\caption{}\label{fig:exa-median}
\end{figure}

\end{exa}

We now take a retrospective look at the original methods of Stavropoulos used to prove \Cref{cor:clique-med-decomp} for finite graphs, and compare them to our own.
Stavropoulos made use of the well-known fact that Cartesian products of median graphs are themselves median, with easily described convex subgraphs. 
%
%
Using this, Stavropoulos described how one can take a finite collection of tree decompositions of a graph $G$, and construct the `Cartesian product' of these decompositions. More precisely, we have the following. 

\begin{prop}[{cf. \cite[Theorem 6.7]{stavropoulos2016graph}}]
\label{lem:prod-tree-decomp-median-decomp}
Let $G$ be a graph and for each $i\in[n]$ let $(M_i,\beta_i)$ be a median decomposition of $G$.
Set $M:=M_1\cartprod\cdots\cartprod M_n$ and define
\[
\beta:V(M)\longrightarrow \mathcal P\bigl(V(G)\bigr),\qquad
\beta(t_1,\dots,t_n):=\bigcap_{i=1}^n \beta_i(t_i).
\]
Then $(M,\beta)$ is a median decomposition of $G$.
\end{prop}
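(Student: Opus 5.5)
The plan is to verify \ref{m1}--\ref{m3} directly, using three standard facts about the Cartesian product $M = M_1\cartprod\cdots\cartprod M_n$. First, $M$ is a median graph. Second, its metric is the $\ell^1$-sum $d((s_i),(t_i)) = \sum_i d_{M_i}(s_i,t_i)$. Third, the convex subgraphs of $M$ are exactly the products $C_1\times\cdots\times C_n$ with each $C_i$ convex in $M_i$; in fact we only need that such products are convex. The second fact gives intervals coordinatewise, $I((s_i),(t_i)) = \prod_i I_{M_i}(s_i,t_i)$, because a vertex lies on a geodesic exactly when it realises equality in each coordinate's triangle inequality. The third fact follows from this interval description.

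For \ref{m3}, fix $x\in V(G)$. By definition of $\beta$ we have $x\in\beta(t_1,\dots,t_n)$ if and only if $x\in\beta_i(t_i)$ for every $i$. Hence
\[
\beta^{-1}(x) = \beta_1^{-1}(x)\times\cdots\times\beta_n^{-1}(x).
\]
Each factor is convex by \ref{m3} for $(M_i,\beta_i)$. Given two points of this product and a geodesic between them, each vertex of the geodesic lies in the coordinatewise interval, so its $i$-th coordinate lies in $I_{M_i}$ of two points of $\beta_i^{-1}(x)$, and therefore in $\beta_i^{-1}(x)$. So $\beta^{-1}(x)$ is convex, and it is nonempty because each factor is nonempty by \ref{m1}.

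Condition \ref{m2} implies \ref{m1} whenever $x$ has a neighbour. For an isolated vertex $x$, or more generally, pick $t_i\in\beta_i^{-1}(x)$ for each $i$; then $x\in\beta(t_1,\dots,t_n)$. For an edge $xy\in E(G)$, \ref{m2} for each $(M_i,\beta_i)$ gives $t_i$ with $\{x,y\}\subseteq\beta_i(t_i)$, and then $\{x,y\}\subseteq\beta(t_1,\dots,t_n)$. Thus \ref{m1} and \ref{m2} are immediate, because bags of the product are intersections and the fibre conditions decouple across coordinates.

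The only real obstacle is the structural input about Cartesian products: that $M$ is median and that product intervals decompose coordinatewise. I would prove the distance formula by projecting paths to each coordinate, since each edge of $M$ changes exactly one coordinate, and by concatenating coordinate geodesics for the reverse inequality. The interval description follows from this, and uniqueness of medians then holds coordinatewise: the median of three points of $M$ is the tuple of the coordinate medians. With these facts in place the proposition is a routine check. The hypothesis that $n$ is finite is used only so that the product graph is connected.
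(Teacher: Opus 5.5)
Your proposal is correct and follows essentially the same route as the paper: both use that $M$ is median and that products of convex subgraphs are convex, then check (M1)--(M3) via the identity $\beta^{-1}(x)=\beta_1^{-1}(x)\times\cdots\times\beta_n^{-1}(x)$. You also derive the product facts from the $\ell^1$ distance formula instead of citing Stavropoulos. Your treatment of (M2) is more careful than the paper's one-line argument: you choose each $t_i$ so that its bag contains both endpoints of the edge, whereas the paper only picks bags containing a single vertex.
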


\begin{proof}
    It is a standard fact that a Cartesian product of median graphs is a median graph. The convex subgraphs of $M$ are precisely those of the form $C=C_1\cartprod\cdots\cartprod C_k$, where each $C_i$ is a convex subgraph of $M_i$ \cite[Lem.~5.1]{stavropoulos2016graph}.

    Given $x \in V(G)$, let $v_i \in V(M_i)$ be such that $x \in \beta_i(v_i)$. Then $x \in \beta(v_1, \ldots, v_n)$. This implies that $(M,\beta)$ satisfies \ref{m1} and \ref{m2}. To see that \ref{m3} holds, we note that $\beta^{-1}(x) = \beta_1^{-1}(x) \cartprod\cdots\cartprod\beta_n^{-1}(x)$, and so fibres are convex by the above remark.
\end{proof}

Stavropoulos only considered the above in the case where the $M_i$ are trees, and so the $(M_i,\beta_i)$ are tree decompositions. However, the argument goes through very much unchanged. 
One curious thing about this construction is that it provides an easy way to construct `exotic' median decompositions. For example, median decompositions whose dual system of separations is not discrete, as we saw in \Cref{exa:dual-seps-not-ECC}. More precisely, let $\Sigma$ be a separation system of a graph $G$ which decomposes as a finite disjoint union 
$$
\Sigma = \Sigma_1 \sqcup \cdots \sqcup \Sigma_k,
$$
where each $\Sigma_i$ is itself a discrete, crossing-finite, and ECC separation system. Then \Cref{lem:prod-tree-decomp-median-decomp} allows us to easily construct a median decomposition $(M,\beta)$ of $G$ such that $\Sigma_{M,\beta} = \Sigma$ by taking $(M_i, \beta_i)$ to be the median decomposition dual to $\Sigma_i$, even if $\Sigma$ itself is not discrete. Of course, $\Sigma$ is necessarily ECC and crossing-finite if the $\Sigma_i$ are. 
If $\Sigma$ is discrete and the true dual median decomposition $(M_\Sigma, \alpha)$ is defined, then this product decomposition $(M,\beta)$ will usually strictly subsume the dual $(M_\Sigma, \alpha)$, containing it as a `sub-decomposition' in the sense of \Cref{prop:median-decomp-contains-the-dual}. We obtain equality between these decompositions if and only if for all $i \neq j$, every element of $\Sigma_i$ crosses every element of $\Sigma_j$.


\section{Geometry of median decompositions}\label{sec:geometry}

The results of the previous section highlight something important. 
That is, that the median-width parameter encodes zero information about the global geometry of a graph. 
This is in stark contrast to tree-width and tree-length, which are now well-understood to strongly influence the large-scale structure of graphs; see \cite{hickingbotham2025graphs, nguyen2025coarse, distel2025alternative}.  
That being said, this does not mean that median decompositions do not themselves encode geometry. In this section, we will take a particular look at special types of median decompositions---so-called \emph{proper} and \emph{geometric} decompositions---and describe how their existence influences the coarse geometry of the graph. Special attention will be paid to Cayley graphs of groups acting on median graphs.

\subsection{Pull-back decompositions}\label{sec:pull-backs}

First, we describe a cheap but very helpful trick for constructing a median decomposition of a graph over a given median graph.

\begin{defn}[Pull-back median decomposition]\label{def:pull-back}
    Let $G$ be a connected graph, and let $M$ be a median graph. Let $\varphi : V(G) \to V(M)$ be any set-theoretic map. For every $\mathfrak h \in \vec {\mathcal H}(M)$, let 
    $$
    (A_\mathfrak h, B_\mathfrak h) := (N[\varphi^{-1}(\mathfrak h_+)], N[\varphi^{-1}(\mathfrak h_-)]). 
    $$
    Define a map $\beta : V(M) \to \mathcal P(V(G))$ via
    $$
    \beta(v) := \bigcap_{\substack{\mathfrak h \in \vec {\mathcal H}(M) : \\ v \in \mathfrak h_+}} A_\mathfrak h.
    $$
    Then $(M,\beta)$ is called the \defin{pull-back median decomposition over $\varphi$}. 
\end{defn}

\begin{prop}
    Let $G$ be a graph, $M$ a median graph, and $\varphi : V(G) \to V(M)$. Then the pull-back median decomposition $(M,\beta)$ is a median decomposition.
\end{prop}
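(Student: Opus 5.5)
The plan is to identify each fibre $\beta^{-1}(x)$ explicitly as a convex set in $M$. Then \ref{m3} holds by construction, and \ref{m1} and \ref{m2} follow from exhibiting one explicit vertex in the relevant fibres. For $x \in V(G)$, write $S_x := \varphi(N[x]) \subseteq V(M)$. Let $\mathrm{Hull}(S_x)$ be the intersection of all halfspaces $\mathfrak h_+$, $\mathfrak h \in \vec{\mathcal H}(M)$, with $S_x \subseteq \mathfrak h_+$, with the convention that this intersection is $V(M)$ if no $\Theta$-class has such a halfspace. Since halfspaces are convex by \Cref{prop:halfspaces-convex}, and an arbitrary intersection of convex subgraphs is convex, $\mathrm{Hull}(S_x)$ induces a convex subgraph of $M$.

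The key step is the identity $\beta^{-1}(x) = \mathrm{Hull}(S_x)$. First unwind the definition. Since $x \in N[\varphi^{-1}(\mathfrak h_+)]$ if and only if some $y \in N[x]$ has $\varphi(y) \in \mathfrak h_+$, we get
\[
x \in \beta(v) \iff \text{every } \mathfrak h \in \vec{\mathcal H}(M) \text{ with } v \in \mathfrak h_+ \text{ satisfies } S_x \cap \mathfrak h_+ \neq \emptyset .
\]
Applying this to $\mathfrak h^r$, the right-hand side is equivalent to: no halfspace containing $S_x$ misses $v$, i.e.\ $v \in \mathrm{Hull}(S_x)$. I would check this equivalence carefully, since it is the whole content of the proof. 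Concretely, $v \notin \mathrm{Hull}(S_x)$ means there is some $\mathfrak h'$ with $S_x \subseteq \mathfrak h'_+$ and $v \in \mathfrak h'_-$. Setting $\mathfrak h = (\mathfrak h')^r$ gives $v \in \mathfrak h_+$ and $S_x \cap \mathfrak h_+ = \emptyset$, which is exactly the failure of the displayed condition. Reading this backwards gives the converse. This step uses only the fact that $\mathfrak h_\pm$ partition $V(M)$, so Kakutani (\Cref{thm:kakutani}) is not even needed once the hull is defined as an intersection of halfspaces. This gives \ref{m3}.

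For \ref{m1}, note that $x \in N[x]$, so $\varphi(x) \in S_x \subseteq \mathrm{Hull}(S_x) = \beta^{-1}(x)$, and hence $x \in \beta(\varphi(x))$. For \ref{m2}, take an edge $xy \in E(G)$. Then $x \in N[x] \cap N[y]$, so $\varphi(x) \in S_x \cap S_y$. Thus $\varphi(x)$ lies in both fibres, and $\{x,y\} \subseteq \beta(\varphi(x))$.

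The main (mild) obstacle is the bookkeeping in the fibre identity. One must be careful that the definition of $\beta(v)$ quantifies over oriented classes $\mathfrak h$ with $v \in \mathfrak h_+$, while the hull is phrased via halfspaces containing $S_x$; passing between these is exactly the reversal $\mathfrak h \leftrightarrow \mathfrak h^r$. One should also note the degenerate case: if $S_x$ meets every halfspace, the fibre is all of $M$, which is convex. No connectivity of $G$ and no finiteness of $N[x]$ is needed.
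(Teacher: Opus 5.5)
Your proof is correct and is essentially the paper's argument. Both proofs obtain \ref{m1} and \ref{m2} from $x,y\in\beta(\varphi(x))$. For \ref{m3}, the paper argues by contradiction: a geodesic between two points of the fibre that leaves it would have to cross twice some $\mathfrak h$ with $x\notin A_{\mathfrak h}$. That is just convexity of halfspaces applied to the intersection-of-halfspaces description of the fibre, which your identity $\beta^{-1}(x)=\mathrm{Hull}(\varphi(N[x]))$ states explicitly and slightly more informatively.
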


\begin{proof}
    We verify the axioms in turn. First, note that if $xy \in E(G)$, then both $x$ and $y$ lie in $\beta(\varphi(x))$. This verifies both \ref{m1} and \ref{m2}. 
    For \ref{m3}, let $x \in V(G)$ and let $F = \beta^{-1}(x)$ denote the set of bags containing $x$. Let $u,v \in F$, and let $\gamma$ be a geodesic between $u$ and $v$. Suppose there existed $w \in V(\gamma)$ such that $w \not\in F$. Then there exists $\mathfrak h \in \vec {\mathcal H}(M)$ such that $w \in \mathfrak h_+$, but $x \not\in A_\mathfrak h$. But then we must have that $u,v \in \mathfrak h_-$. This contradicts the assumption that $\gamma$ is a geodesic $u$ and $v$, since it must crosses some $\Theta$-class twice. 
\end{proof}

\begin{rem}
    We suggest the reader take particular note of the simple fact that $x \in \beta(\varphi(x))$ for all $x \in V(G)$ as it will prove to be quite helpful.
\end{rem}

\begin{defn}
    Suppose that $G$ and $H$ are $\Gamma$-graphs, then a map $\varphi : V(G) \to V(H)$ is said to be \defin{$\Gamma$-equivariant} if $g \cdot \varphi(x) = \varphi(g\cdot x)$ for all $x \in V(G)$, $g \in \Gamma$.
\end{defn}

\begin{rem}
    If $G$ and $M$ are $\Gamma$-graphs in \Cref{def:pull-back} and $\varphi : V(G) \to V(M)$ is $\Gamma$-equivariant, then is easy to see that the pull-back median decomposition over $\varphi$ is $\Gamma$-canonical.
\end{rem}

\subsection{Proper and geometric median decompositions}

We now study some special types of median decompositions. We will need the following definitions. 

\begin{defn}\label{def:proper-and-geometric}
    Let $G$ be a graph, and let $(M, \beta)$ be a median decomposition. 
    \begin{enumerate}
        \item We say that $(M, \beta)$ has \defin{bounded fibres} there exists $R \geq 0$ such that for all $x \in V(G)$, we have $\diam(\beta^{-1}(x)) \leq R$.  

        \item Call $(M,\beta)$ \defin{properly supported} if for all $r \geq 0$ there exists $R \geq 0$ such that for all $v \in V(M)$, we have
        $$
        \diam\Big(\{x \in V(G) : d(v,\beta^{-1}(x)) \leq r \}\Big) \leq R. 
        $$ 

        \item Finally, $(M, \beta)$ is \defin{quasi-densely supported} if there exists $R \geq 0$ such that for all $v \in V(M)$ there exists $u \in V(M)$ such that $d(u,v) \leq R$ and $\beta(u) \neq \emptyset$. 
    \end{enumerate}
    If $(M, \beta)$ is properly supported and has bounded fibres, we call it \defin{proper}. If, in addition, $(M,\beta)$ is also quasi-densely supported, we say that $(M,\beta)$ is \defin{geometric}. 
\end{defn}

We now describe how proper and geometric median decompositions influence the geometry of a graph.
Recall that a map $f : X \to Y$ between graphs a \defin{coarse embedding} if there exists $K_1 \geq 1$ and an increasing function $\rho : \N \to \N$ such that $\rho(n) \to \infty$ as $n \to \infty$, and 
    $$
    K_1d(x,y) + K_1\geq d(f(x),f(y)) \geq \rho(d(x,y))
    $$
for all $x,y \in X$. If $\rho$ is affine, i.e. there exists $K_2 \geq 1$ such that $\rho(n) = \tfrac 1 {K_2} n- K_2$, then $f$ is called a \defin{quasi-isometric embedding}. 
If there exists $K_3 \geq 0$ such that for all $y \in Y$ there exists $x \in X$ such that $d(f(x),y) \leq K_3$, then $f$ is said to be \defin{coarsely surjective}. A coarsely surjective quasi-isometric embedding is called a \defin{quasi-isometry}, and $X$ and $Y$ are said to be \defin{quasi-isometric}. Note that coarse embeddings and quasi-isometries between graphs need not be graph morphisms. Moreover, it is known that a coarsely surjective coarse embedding between two connected graphs is necessarily a quasi-isometry \cite[Cor.~4.3]{de2016zooming}. 

To give the correct statement of the next theorem, we need to define the `cubical subdivision' of a median graph. The following definition is fairly standard in the literature on median graphs and CAT(0) cube complexes; see e.g. \cite{haglund2023isometries}.

\begin{defn}[First cubical subdivision]
    Let $M$ be a median graph, and $n \geq 0$. An \defin{$n$-cube} in $M$ is an induced subgraph isomorphic to the $n$-dimensional cube graph. Note that 0-cubes correspond to vertices, and 1-cubes correspond to edges. Define a new graph $M'$ as follows:
    \begin{enumerate}
        \item $V(M) = \{C \subset M : \text{$C$ is a cube}\}$.
    
        \item Two cubes are joined by an edge if one is contained in  the other, and their dimensions differ by exactly 1. 
    \end{enumerate}
    The graph $M'$ is called the \defin{first cubical subdivision} of $M$. 
\end{defn}

It is easy to show that the first cubical subdivision $M'$ of a median graph $M$ is also a median graph. If $M$ is a $\Gamma$-graph, then this action extends naturally to $M'$. The natural inclusion $V(M) \hookrightarrow V(M')$ has the effect of doubling the distance between vertices in the image. If $M$ is crossing-bounded, then this inclusion is coarsely surjective and thus a quasi-isometry. If $C \subset M$ is a convex subgraph, then the first cubical subdivision $C'$ of $C$ naturally includes into $M'$ as a convex subgraph. Note that the first cubical subdivision of a bounded median graph is also necessarily bounded. 

It is a standard fact that if a group $\Gamma$ acts on a median graph $M$ such that the orbit $\Gamma \cdot v$ of some/every vertex is bounded in diameter, then $\Gamma$ stabilises a cube and thus admits a global fixed-point in $M'$, see  \cite{gerasimov1998fixed}. 



We are now ready to state and prove the next theorem, which offers a translation between (canonical) proper/geometric median decompositions and (equivariant) coarse embeddings/quasi-isometries. 

\begin{thm}\label{thm:proper-geom-char}
    Let $\Gamma$ be a group, $G$ be a connected $\Gamma$-graph, and $M$ be a median $\Gamma$-graph.
    \begin{enumerate}
        \item Suppose $G$ admits a $\Gamma$-canonical median decomposition $(M,\beta)$ over $M$. 
        \begin{enumerate}
            \item If $(M,\beta)$ is proper, then there exists a $\Gamma$-equivariant coarse embedding  $f : G \to M'$, where $M'$ is the first cubical subdivision of $M$.

            \item If $M$ is crossing-bounded and $(M,\beta)$ is geometric, then there exists a $\Gamma$-equivariant quasi-isometry  $f : G \to M'$.
        \end{enumerate} 


        
        \item Suppose that either $G$ is bounded degree, or $M$ is crossing-bounded. Let $f : G \to M$ be a $\Gamma$-equivariant coarse embedding (quasi-isometry). Then the pull-back median decomposition over $f$ is $\Gamma$-canonical and proper (geometric). 
    \end{enumerate}
\end{thm}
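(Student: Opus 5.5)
For part (1) I will build $f$ from the fibres. Given $x\in V(G)$, the fibre $F_x=\beta^{-1}(x)$ is a nonempty convex subgraph of $M$ by \ref{m1} and \ref{m3}, and it has diameter at most $R$ because the fibres are bounded. Its first cubical subdivision $F_x'$ sits inside $M'$ as a bounded convex subgraph.

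\emph{Choosing $f(x)$ equivariantly.} Canonicity gives $g\cdot F_x=F_{gx}$, so the stabiliser $\Gamma_x$ acts on $F_x$ with bounded orbits. By the fixed-point result of \cite{gerasimov1998fixed}, applied to the median graph $F_x$, $\Gamma_x$ stabilises a cube of $F_x$ and hence fixes a vertex $c_x\in V(F_x')$. I choose such a $c_x$ for one $x$ in each $\Gamma$-orbit and extend by $f(gx)=g\cdot c_x$. This is well defined because $c_x$ is $\Gamma_x$-fixed, and it guarantees $f(x)\in V(F_x')$ for every $x$.

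\emph{Upper bound.} If $xy\in E(G)$, then $F_x\cap F_y\neq\emptyset$ by \ref{m2}. Hence $d_{M'}(f(x),f(y))\le 2(2R)+O(1)$, since distances in $M$ double in $M'$ and $f(x)$ lies within a bounded distance of $V(F_x)\subset V(M')$. Summing along a geodesic in $G$ gives $d(f(x),f(y))\le K_1 d(x,y)$.

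\emph{Lower bound.} Suppose $d_{M'}(f(x),f(y))\le r$. Then there is $v\in V(M)$ with $d_M(v,F_x)\le r$ and $d_M(v,F_y)\le r$; take $v$ to be a vertex of $F_x$ near $f(x)$ and absorb the constant into $r$. Proper support then bounds $d_G(x,y)\le R(r)$. Setting $\rho(n)=\min\{r : R(r)\ge n\}$ gives the required proper lower function.

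\emph{Part (1)(b).} I will add coarse surjectivity. Quasi-dense support gives, for each $v\in V(M)$, a vertex $u$ within $R$ of $v$ and some $x\in\beta(u)$, so $v$ is within $R$ of $F_x$ and hence close to $f(x)$. Since $M$ is crossing-bounded, $V(M)$ is coarsely dense in $M'$, so $f$ is coarsely surjective. A coarsely surjective coarse embedding is a quasi-isometry by \cite[Cor.~4.3]{de2016zooming}.

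For part (2), canonicity of the pull-back is the remark following \Cref{def:pull-back}. Throughout I use the fact that $x\in\beta(f(x))$.

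\emph{Bounded fibres.} Suppose $v\in F_x$ with $d(v,f(x))$ large. Then many $\Theta$-classes $\mathfrak h$ satisfy $v\in\mathfrak h_+$ and $f(x)\in\mathfrak h_-$. For each of these, $x\in A_{\mathfrak h}=N[f^{-1}(\mathfrak h_+)]$, so some neighbour $y$ of $x$ has $f(y)\in\mathfrak h_+$. Neighbours map within $K_1+K_1$ of $f(x)$. So the set $f(N[x])$ lies in a ball of radius $2K_1$ around $f(x)$, and every separating $\mathfrak h$ must also separate $f(x)$ from some point of that ball. If $G$ has bounded degree, this set is finite of uniformly bounded size, so only boundedly many such $\mathfrak h$ exist, and $d(v,f(x))$ is bounded by \Cref{thm:kakutani}. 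If instead $M$ is crossing-bounded, I would use the convex hull of $f(N[x])$: in a finite-dimensional median graph the hull of a set of diameter $D$ has diameter bounded in terms of $D$ and $\dim M$. Since $F_x$ is contained in that hull, this bounds $\diam F_x$.

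\emph{Proper support.} If $d(v,F_x)\le r$ and $d(v,F_y)\le r$, then $d(f(x),f(y))\le 2r+2R_0$, where $R_0$ bounds $d(f(x),F_x)$; indeed $f(x)\in F_x$, so the fibre bound gives this. The coarse-embedding lower bound $\rho$ then bounds $d(x,y)$.

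\emph{Quasi-dense support.} This follows from coarse surjectivity, since $\beta(f(x))\ni x$.

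The main obstacle is the bounded-fibre step in the crossing-bounded case, which rests on the hull-diameter bound for finite-dimensional median graphs; I expect to cite or prove it via Helly/Kakutani-type counting of $\Theta$-classes.
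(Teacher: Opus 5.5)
Your proposal is correct. Part (1) follows the paper's proof exactly: stabilisers fix a vertex of the subdivided bounded fibre, \ref{m2} gives the upper bound, proper support gives the lower bound, and \cite{de2016zooming} upgrades the map to a quasi-isometry. In part (2), proper support and quasi-density likewise come, as in the paper, from $x\in\beta(f(x))$. (A small slip there: $R_0$ should be the fibre-diameter bound, so every point of $F_x$ lies within $R_0$ of $f(x)$.) Your bounded-degree count is the paper's pigeonhole on pairs $y_i,z_i\in N[x]$, written as a union bound.

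The genuine difference is the crossing-bounded case of the bounded-fibre step. The paper works directly with a geodesic inside $\beta^{-1}(x)$. For each of its $\Theta$-classes it picks $y_i,z_i\in N[x]$ on opposite sides, extracts a nested subsequence, and obtains two points of $f(N[x])$ separated by the whole subsequence. You instead factor through the convex hull of $f(N[x])$. Two steps you left implicit should be written out.

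\emph{The containment} $\beta^{-1}(x)\subseteq\mathrm{hull}(f(N[x]))$. If $v$ lies outside the hull, Kakutani separation (\Cref{thm:kakutani}) gives $\mathfrak h$ with $v\in\mathfrak h_+$ and $f(N[x])\subseteq\mathfrak h_-$. Then $x\notin N[f^{-1}(\mathfrak h_+)]=A_{\mathfrak h}$, so $x\notin\beta(v)$. The converse also holds, so the pull-back fibre is exactly this hull.

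\emph{The hull lemma} you flagged is true, with $\diam\,\mathrm{hull}(S)\le\dim(M)\cdot\diam(S)$. Every $\Theta$-class on a geodesic between two hull points has points of $S$ on both sides, since otherwise the hull would lie in one halfspace. Orient these classes along the geodesic; non-crossing ones are then nested. Dilworth gives a chain of length at least $L/\dim(M)$. A point of $S$ in the smallest halfspace of the chain and a point of $S$ outside the largest one are separated by every class in the chain.

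That is essentially the paper's nested-subsequence argument, so the two proofs agree in substance. Your packaging buys a clean description of pull-back fibres as hulls of images of closed neighbourhoods. The paper's version is self-contained and needs no separate lemma.
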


The hypotheses in \Cref{thm:proper-geom-char}(1) are a little convoluted, though this is necessary to ensure equivariance of the map $f$. Mercifully, these hypotheses can often be weakened in specific settings to yield simpler statements. This is discussed below in \Cref{rem:discussion-of-proper-geom-char-hyp}.

\begin{proof}[Proof of \Cref{thm:proper-geom-char}]
    First, we prove (1)(a). Suppose that $G$ admits a $\Gamma$-canonical proper median decomposition $(M, \beta)$. 
    We claim that there exists a $\Gamma$-equivariant coarse embedding of $G$ into $M'$, the first cubical subdivision of $M$. Since $\Gamma$ acts on $G$, let $\{x_\alpha\}_\alpha$ be orbit representatives. Let $\Gamma_\alpha \leq \Gamma$ denote the stabiliser of $x_\alpha$. 
    We have that $\Gamma_\alpha$ acts on the convex subgraph $F_\alpha:= M[\beta^{-1}(x_\alpha)]$.
    Since $(M,\beta)$ has bounded fibres, the $F_\alpha$ are uniformly bounded in diameter. In particular, this means that $\Gamma_\alpha$ fixes a vertex in $F_\alpha'$, and thus in $M'$, say $v_\alpha \in V(M')$, and $v$ lies uniformly close to $\beta^{-1}(x_\alpha) \subset V(M) \subset V(M')$. Extend the map $x_\alpha \mapsto v_\alpha$ equivariantly to a $\Gamma$-equivariant map $f : V(G) \to V(M')$. Since $(M,\beta)$ has bounded fibres, by \ref{m2} it is easy to see that there exists $K_1 > 0$ such that $K_1d(x,y) + K_1\geq d(f(x),f(y))$ for all $x,y \in V(G)$. The required lower bound then follows easily from the definition of properly supported, since we must have that $d(\beta^{-1}(x), \beta^{-1}(y)) \to \infty$ uniformly as $d(x,y) \to \infty$. It follows that $f$ is a coarse embedding.
    
    To see (1)(b), suppose further that  $(M, \beta)$ is quasi-densely supported and $M$ is crossing-bounded. Since the inclusion $V(M) \hookrightarrow V(M')$ is coarsely surjective and $(M,\beta)$ has bounded fibres, we must have that $f$ is coarsely surjective. Since coarsely surjective coarse embeddings of graphs are quasi-isometries \cite[Cor.~4.3]{de2016zooming}, it follows that $f$ is a quasi-isometry. This concludes the proof of (1). 
    
    We now prove (2). Let $f : V(G) \to V(M)$  be a $\Gamma$-equivariant coarse embedding. Let $(M, \beta)$ be the $\Gamma$-canonical pull-back median decomposition over $f$, as defined in \Cref{def:pull-back}. We claim that $(M,\beta)$ is proper. First, we show that it has bounded fibres. Fix $x \in V(G)$.
    Since fibres are convex by \ref{m3}, let $e_1e_2\ldots e_n$ be a geodesic of length $n$ in $\beta^{-1}(x)$, with $n$ large. Let $\mathfrak h^i$ denote the oriented $\Theta$-class of $e_i$. In particular, since this path was a geodesic, the $\mathfrak h^i$ are all pairwise distinct. The construction of the pull-back decomposition gives us separations via
    $$
    (A_i, B_i) := (A_{\mathfrak h^i}, B_{\mathfrak h^i}) = (N[f^{-1}(\mathfrak h^i_+)], N[f^{-1}(\mathfrak h^i_-)]).
    $$
    Since each of the $e_i$ has both endpoints in $\beta^{-1}(x)$, we must have that
    $
    x \in A_i \cap B_i
    $. 
    We must therefore have that for every $i \geq 1$ there exists $y_i, z_i \in N[x]$ such that $y_i \in f^{-1}(\mathfrak h^i_+)$, and $z_i \in f^{-1}(\mathfrak h^i_-)$. We now split into two cases.
    
    Suppose first that $M$ is crossing-bounded. By passing to a long subsequence, $\mathfrak h^{n_1}, \ldots, \mathfrak h^{n_k}$, we may assume that the $\mathfrak h^i$ are nested, where $k = k(n)$ is an increasing function in $n$. But then $f(y_{n_1})$ is separated from $f(z_{n_k})$ by every $\mathfrak h^i$. In particular, $d(f(y_{n_1}), f(z_{n_k})) \geq k$. But $d(y_{n_1}, z_{n_k}) \leq 2$ since every $y_i, z_i \in N[x]$. This bounds $k$, thus bounding $n$. It follows that $(M,\beta)$ has bounded fibres. 
    
    Suppose instead that $G$ is bounded-degree. Then, again, by passing to a long subsequence $\mathfrak h^{n_1}, \ldots, \mathfrak h^{n_k}$, we may assume that $y_{n_i} = y_{n_j} =: y$ and $z_{n_j} = z_{n_i} =: z$ for all $1 \leq i,j \leq k$, where, again, $k = k(n)$ is an increasing function in $n$. But then $f(y)$ and $f(z)$ are separated by $k$ $\Theta$-classes, and so $d(f(y),f(z)) \geq k$. However, $d(y,z) \leq 2$, and so we once again bound $k$ and thus bound $n$. In both cases, we have shown that $(M, \beta)$ has bounded fibres. 

    Recall that $x \in \beta(f(x))$ for all $x \in V(G)$. In particular, since $(M,\beta)$ has bounded fibres and $f$ is a coarse embedding, this easily implies that $(M,\beta)$ is properly supported, and thus a proper median decomposition.
    Assume further that $f$ is coarsely surjective. Then, again, since $x \in \beta(f(x))$ for all $x \in V(G)$, this immediately implies that $(M, \beta)$ has quasi-dense support, and so thus is a geometric median decomposition. This concludes the proof of (2).
\end{proof}

\begin{rem}\label{rem:discussion-of-proper-geom-char-hyp}
    Some remarks are in order:
    \begin{enumerate}
        \item For full generality, passing to the cubical subdivision is necessary to ensure equivariance of $f$ in \Cref{thm:proper-geom-char}(1). However, it is not always required. 
        In particular, if for all $x \in V(G)$ we have that the stabiliser $\Gamma_x \leq \Gamma$ fixes a vertex in $M$, then the argument goes through without passing to the cubical subdivision. 
        In this case, we can also drop the crossing-bounded hypothesis of \Cref{thm:proper-geom-char}(1)(b), as it is only needed to ensure coarse surjectivity of the inclusion into the cubical subdivision.

        \item One recovers a simpler statement for graphs without a group action by simply setting $\Gamma = \{1\}$. As noted above, this case does not require passing to the cubical subdivision for \Cref{thm:proper-geom-char}(1) to hold.

        \item For (non-equivariant) quasi-isometries, a related result appears for \emph{graph decompositions} in Knappe's thesis \cite[\S4.3]{knappe2025graph}.
    \end{enumerate}
    
\end{rem}

The following example illustrates that \Cref{thm:proper-geom-char}(2) fails without the additional hypothesis that either $M$ is crossing-bounded or $G$ is bounded-degree.

\begin{exa}
    Let $G$ be an infinite star graph. Let $M$ denote the infinite cube. That is, the standard Cayley graph of the group $\Gamma = \bigoplus_{\mathbb N} \mathbb Z_2$. There is an isometric embedding $f : G \to M$ which sends the central vertex of $G$ to the identity vertex of $M$, and each leaf of $G$ to a standard basis vector. The pull-back decomposition over $f$ has all fibres unbounded. In particular, it is not proper in the sense of \Cref{def:proper-and-geometric}. 
\end{exa}


As a corollary of the above, we obtain the following characterisation of when a finitely generated group $\Gamma$ acts metrically-properly or geometrically on a median graph $M$.  
Recall that a group action by $\Gamma$ on $M$ is said to be \defin{metrically proper} if the set 
$$
\{g \in G : d(x_0, g\cdot x_0) \leq R\}
$$
is finite for all $R \geq 0$. We say that the action is \defin{cobounded} if there exists $R \geq 0$ such that for all $x,y \in M$, there exists $g \in \Gamma$ such that $d(y, g \cdot x) \leq R$. An action which is both metrically-proper and cobounded is called \defin{geometric}.

\begin{cor}\label{cor:geom-med-decomp-cayley-graphs}
    Let $\Gamma$ be a finitely generated group and $S \subset \Gamma$ a finite generating set. Let $G = \cay(\Gamma,S)$ be the Cayley graph. Let $M$ be a median graph. Then $\Gamma$ acts metrically-properly (geometrically) on $M$ if and only if $G$ admits a $\Gamma$-canonical proper (geometric) median decomposition $(M,\beta)$. 
\end{cor}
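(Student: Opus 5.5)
The plan is to derive both directions from \Cref{thm:proper-geom-char}, using two facts about the Cayley graph $G = \cay(\Gamma,S)$. First, $\Gamma$ acts freely and transitively on $V(G)$. Second, $G$ has bounded degree, since $S$ is finite.

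\emph{Forward direction.} Suppose $\Gamma$ acts metrically-properly on $M$. Fix a base vertex $v_0 \in V(M)$ and define $f : V(G) \to V(M)$ by $f(g) = g\cdot v_0$. This map is $\Gamma$-equivariant and well defined because vertex stabilisers in $G$ are trivial.

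For the upper bound, note that each generator $s \in S$ moves $v_0$ a distance at most $K := \max_{s \in S} d(v_0, s\cdot v_0)$. Hence, by the triangle inequality and equivariance, $d(f(g),f(h)) \leq K\, d(g,h)$.

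For the lower bound, metric properness says that only finitely many $g$ satisfy $d(v_0, g\cdot v_0) \leq R$. Equivariance then gives $d(f(g), f(h)) = d(v_0, g^{-1}h\cdot v_0)$, and $d(g,h) = |g^{-1}h|_S$. So we may set $\rho(n) := \min\{d(v_0, k\cdot v_0) : |k|_S \geq n\}$. This is increasing and finite-valued, and it tends to infinity because any bounded value of $d(v_0, k \cdot v_0)$ is realised by only finitely many $k$. Thus $f$ is a $\Gamma$-equivariant coarse embedding.

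If the action is also cobounded, then $f$ is coarsely surjective, so it is a quasi-isometry. Since $G$ has bounded degree, \Cref{thm:proper-geom-char}(2) applies with no crossing-boundedness assumption on $M$. The pull-back decomposition over $f$ is therefore $\Gamma$-canonical and proper (respectively geometric), and it lives over $M$ itself, as required.

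\emph{Converse.} Suppose $(M,\beta)$ is a $\Gamma$-canonical proper median decomposition. I would avoid passing to $M'$ by using \Cref{rem:discussion-of-proper-geom-char-hyp}(1): every stabiliser $\Gamma_x$ is trivial, so it trivially fixes a vertex of $M$.

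To make the argument explicit, pick $v_0 \in \beta^{-1}(1)$ and set $f(g) := g\cdot v_0 \in \beta^{-1}(g)$, which holds by canonicity.

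For metric properness, suppose $d(v_0, g\cdot v_0) \leq R$. Then $v_0$ lies within distance $R$ of the fibre $\beta^{-1}(g)$. Proper support says that the set $\{x : d(v_0, \beta^{-1}(x)) \leq R\}$ has diameter bounded in $G$, and it contains $1$. It therefore lies in a finite ball of the locally finite graph $G$, so there are only finitely many such $g$.

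For coboundedness, suppose the decomposition is geometric and let $v \in V(M)$. Quasi-dense support gives a vertex $u$ with $d(u,v) \leq R$ and some $x = g \in \beta(u)$. Both $u$ and $g\cdot v_0$ lie in $\beta^{-1}(g)$, which has diameter at most $R'$ by the bounded-fibres condition. Hence $d(v, g\cdot v_0) \leq R + R'$. Since every orbit is then coarsely dense, the action is cobounded, and combined with properness it is geometric.

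\emph{Main obstacle.} I expect the only delicate point to be noticing that the converse needs neither the cubical subdivision nor crossing-boundedness. Both are removed by the freeness of the action on $V(G)$, which lets us work with the orbit map directly.
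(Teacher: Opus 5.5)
Your proposal is correct and follows the route the paper intends (the corollary is stated without a separate proof, as a consequence of Theorem~\ref{thm:proper-geom-char}). The forward direction applies part (2) of that theorem to the orbit map, which needs no crossing-boundedness because Cayley graphs have bounded degree. The converse is part (1) with trivial stabilisers, as in Remark~\ref{rem:discussion-of-proper-geom-char-hyp}(1); you simply verify metric properness and coboundedness directly from proper support, bounded fibres and quasi-dense support.
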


The study of finitely generated groups acting metrically-properly or geometrically on median graphs is a rich source of interesting mathematics. \Cref{cor:geom-med-decomp-cayley-graphs} provides a characterisation of this phenomenon purely in terms of the structural properties of the Cayley graph which, in particular, is more widely applicable to $\Gamma$-graphs in general. This leaves us with a dictionary, which opens the door to think about whether there are results in geometric group theory relating to group actions on median graphs admitting non-trivial generalisations in structural graph theory.


\subsection{Proper decompositions from separation systems}

We conclude this section by describing necessary and sufficient conditions for the median decomposition dual to a separation system to be proper. These are described in the following definition. 

\begin{defn}
    Let $G$ be a graph, and let $\Sigma$ be system of separations. 
    \begin{enumerate}
        \item We say that $\Sigma$ has \defin{uniform local width} if
            there exists $R \geq 0$ such that for all $x \in V(G)$, every chain and anti-chain in the subset
            $
            \Sigma_x := \{(A,B) \in \Sigma : x \in A \cap B\}
            $
            has cardinality at most $R$. 

        \item We say that $\Sigma$ has \defin{increasing separations} if for all $r \geq 0$ there exists $N = N(r) \geq 0$, where $N(r) \to \infty$ as $r \to \infty$, such that for all $x,y  \in V(G)$ with $d(x,y) \geq r$, there exists at least $N$ pairwise distinct $(A,B) \in \Sigma$ such that $x \in A \sm B$ and $y \in B \sm A$.
    \end{enumerate}
\end{defn}

\begin{thm}\label{thm:proper-dual-decomp-char}
    Let $G$ be a graph.
    \begin{enumerate}
        \item Let $\Sigma$ be a discrete, crossing-finite, ECC system of separations. Let $(M_\Sigma, \beta)$ denote the dual median decomposition.
        \begin{enumerate}
            \item If $\Sigma$ has uniform local width then $(M_\Sigma, \beta)$ has bounded fibres.

            \item If $\Sigma$ has increasing separations then $(M_\Sigma, \beta)$ is properly supported.
        \end{enumerate}
        \item Conversely, let $(M,\beta)$ be a weakly reduced median decomposition, with $M$ crossing-finite. Let $\Sigma = \Sigma_{M,\beta}$ denote the dual system of separations.
        \begin{enumerate}
            \item If $(M, \beta)$ has bounded fibres then $\Sigma$ has uniform local width.

            \item If $(M, \beta)$ is properly supported then $\Sigma$ has increasing separations.
        \end{enumerate}
\end{enumerate}
\end{thm}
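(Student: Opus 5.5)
The plan is to translate each property through the dictionary between oriented $\Theta$-classes of $M$ and the induced separations, and then count $\Theta$-classes with the Kakutani separation property (\Cref{thm:kakutani}). The basic observation, used throughout, comes from \Cref{lem:fibre-halfspace-char}. For $\mathfrak h\in\vec{\mathcal H}(M)$ we have $x\in A_\mathfrak h\cap B_\mathfrak h$ if and only if the convex (hence connected) fibre $\beta^{-1}(x)$ meets both $\mathfrak h_+$ and $\mathfrak h_-$, that is, iff $\mathfrak h$ contains an edge of $M[\beta^{-1}(x)]$. Likewise, $x\in A_\mathfrak h\sm B_\mathfrak h$ iff $\beta^{-1}(x)\subset\mathfrak h_+$. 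For the dual decomposition we also use that $\overline\ell$ identifies $\vec{\mathcal H}(M_\Sigma)$ with $\Sigma$, with $(A_\mathfrak h,B_\mathfrak h)=\overline\ell(\mathfrak h)$. Moreover, $(M_\Sigma,\beta)$ is reduced and crossing-faithful, so this identification is an isomorphism of pocsets (\Cref{prop:median-decomp-contains-the-dual} together with \Cref{prop:nest-cross-induced-seps}).

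For (1)(a), take a geodesic of length $n$ inside the fibre $\beta^{-1}(x)$. Its $n$ $\Theta$-classes are pairwise distinct (\Cref{rem:distinct-labels-on-geodesics}), and by the observation they all lie in $\Sigma_x$. Orient each of them so that $\mathfrak h_+$ contains the initial vertex of the geodesic. Then any two of them either cross, or are nested with $\mathfrak h_+\subset\mathfrak h'_+$ or the reverse. Transporting this through the pocset isomorphism, any two of the corresponding separations are either comparable or crossing. By Dilworth/Erdős--Szekeres, $n$ elements contain a chain or a crossing family of size about $\sqrt n$. Uniform local width bounds both by $R$, so $n\le R^2$ and the fibres have diameter at most $R^2$.

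For (1)(b), fix $r$ and $v\in V(M_\Sigma)$, and let $x,y$ satisfy $d(v,\beta^{-1}(x)),d(v,\beta^{-1}(y))\le r$. Suppose $d(x,y)\ge s$. Increasing separations give $N(s)$ distinct separations with $x\in A\sm B$ and $y\in B\sm A$. Their $\Theta$-classes are distinct and each separates $\beta^{-1}(x)$ from $\beta^{-1}(y)$, so by \Cref{thm:kakutani} $d(\beta^{-1}(x),\beta^{-1}(y))\ge N(s)$. This distance is also at most $2r$. Since $N(s)\to\infty$, choose $R$ with $N(s)>2r$ for all $s\ge R$; this forces $d(x,y)<R$, which is proper support.

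For (2)(a), let $F=\beta^{-1}(x)$, of diameter at most $R$, and choose one $\Theta$-class for each element of $\Sigma_x$; each of these classes meets $F$ in an edge. There are two sub-steps.

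\begin{enumerate}
\item A pairwise-crossing family of separations in $\Sigma_x$ comes from pairwise-crossing $\Theta$-classes, by \Cref{prop:nest-cross-induced-seps}(1). Classes that pairwise cross and all cross a convex $F$ cross inside $F$: by the Helly property (\Cref{thm:helly}) applied to the sets $F\cap\mathfrak h_\pm$, all sign patterns are realised in $F$. This yields a cube in $F$ of that dimension, so the family has size at most $R$.
\item For a chain of separations, \Cref{prop:nest-cross-induced-seps}(2) says the chosen classes pairwise cross or satisfy $\mathfrak h_+\subset\mathfrak h'_+$. By Ramsey, a long chain contains either a large crossing family, already bounded by $R$, or a large consistently nested family $\mathfrak h^1_+\subset\dots\subset\mathfrak h^k_+$. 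In the latter case pick $p\in F\cap\mathfrak h^1_+$ and $q\in F\cap\mathfrak h^k_-$; every $\mathfrak h^i$ separates them, so $k\le d(p,q)\le R$.
\end{enumerate}

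This bounds chains and antichains in $\Sigma_x$ by a Ramsey number in $R$.

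For (2)(b), proper support gives the following: if $d(x,y)>R(r)$, then no vertex is within $r$ of both fibres, so $d(\beta^{-1}(x),\beta^{-1}(y))>r$. By \Cref{thm:kakutani} more than $r$ $\Theta$-classes separate the two fibres, and each yields a separation with $x\in A\sm B$ and $y\in B\sm A$. The main obstacle is distinctness, since $(M,\beta)$ is only weakly reduced and distinct $\Theta$-classes may induce the same separation. I would try to show that the $\Theta$-classes inducing one fixed separation and separating the two fibres are pairwise nested (not crossing) and consistently oriented. I would then try to bound their number, either via crossing-finiteness together with discreteness of $\Sigma$ (\Cref{prop:weakly-reduced-crossing-finite-gives-nice-seps}), or by applying proper support to the strip between the outermost two such classes. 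Failing that, I would pass to a reduction collapsing redundant classes as in \Cref{rem:weak-reduction} and check that proper support survives the collapse.
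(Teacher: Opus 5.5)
Your arguments for (1)(a), (1)(b) and (2)(a) are sound. (1)(a) and (1)(b) are the paper's arguments, made more careful by your consistent orientation along the geodesic. In (2)(a) the paper does not use Ramsey. For a chain or a crossing family $(A_i,B_i)$ in $\Sigma_x$, \Cref{prop:nest-cross-induced-seps} already gives $\mathfrak h^i_+\cap\mathfrak h^j_+\neq\emptyset$ and $\mathfrak h^i_-\cap\mathfrak h^j_-\neq\emptyset$ for all $i,j$. One application of \Cref{thm:helly} to $\beta^{-1}(x)$ together with all the $\mathfrak h^i_+$ (respectively all the $\mathfrak h^i_-$) then gives two points of the fibre separated by every $\mathfrak h^i$, so $n\le R$ directly. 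Your case split is also correct; it just gives a Ramsey-type bound.

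The gap is (2)(b), and you located it exactly: \Cref{thm:kakutani} counts $\Theta$-classes, whereas increasing separations asks for pairwise distinct separations. The paper's proof of (2)(b) performs only that count, so it tacitly uses (Red2) and does not supply the missing step either. None of your proposed repairs can work, because (2)(b) is false for weakly reduced decompositions.

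First, classes inducing the same separation need not be nested. Take the path $1\,2\,3$ over the square $\{0,1\}^2$ with $\beta(0,0)=\{1,2\}$, $\beta(1,1)=\{2,3\}$ and $\beta(0,1)=\beta(1,0)=\{2\}$. Both $\Theta$-classes cross, both induce $(\{1,2\},\{2,3\})$, and both separate $\beta^{-1}(1)$ from $\beta^{-1}(3)$.

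More seriously, here is a counterexample to (2)(b) itself.
\begin{itemize}
\item For each $n\ge1$, hang a path $b_n=p_0,p_1,\dots,p_{n-1},z,q_{n-1},\dots,q_0$ of length $2n$ off the vertex $b_n$ of a ray $b_1b_2b_3\cdots$. Call the resulting tree $G$.
\item Let $M$ be a ray $t_1t_2\cdots$ with $\beta(t_i)=\{b_i,b_{i+1}\}$, together with, for each $n$, a grid $Q_n=[0,n]^2$ joined to $t_n$ by an edge at its corner $(0,0)$.
\item Inside $Q_n$, let the fibre of $p_h$ be $\{(0,h),(0,h+1)\}$, that of $q_h$ be $\{(n,h),(n,h+1)\}$, and that of $z$ be the top row $[0,n]\times\{n\}$. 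All other bags of $Q_n$ are empty.
\end{itemize}
This is a median decomposition over a $2$-dimensional median graph, and every induced separation is essential. It is properly supported with $R(r)$ linear in $r$: a node of $Q_n$ within $r$ of the fibre of some $p_h$ and of some $q_{h'}$ forces $n\le 2r$.

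However, the $\Theta$-classes separating $\beta^{-1}(p_0)$ from $\beta^{-1}(q_0)$ are exactly the $n$ vertical classes of $Q_n$. Every one of them induces $\bigl(V(G)\setminus\{q_0,\dots,q_{n-1}\},\ \{z,q_0,\dots,q_{n-1}\}\bigr)$. By \Cref{lem:fibre-halfspace-char}, this is therefore the only element of $\Sigma$ with $p_0\in A\setminus B$ and $q_0\in B\setminus A$, although $d_G(p_0,q_0)=2n$.

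This defeats each of your strategies:
\begin{itemize}
\item Proper support does not see the width-$n$ strip of empty bags between the outermost classes.
\item Discreteness of $\Sigma$ constrains distinct separations, not how many classes induce one separation.
\item Collapsing all but one vertical class of each $Q_n$ puts $\beta^{-1}(p_0)$ and $\beta^{-1}(q_0)$ at distance $1$, so proper support does not survive the collapse.
\end{itemize}

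What does hold is (2)(b) under either of two extra hypotheses. If $(M,\beta)$ is reduced, the paper's count is correct as written. If $(M,\beta)$ has bounded fibres, the statement also holds, and this suffices for \Cref{cor:coarse-embed-trichotomy}, whose decomposition is proper.

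For the bounded-fibre case, let $\mathfrak h^1,\dots,\mathfrak h^k$ be distinct classes with $\beta^{-1}(x)\subseteq\mathfrak h^i_+$ and $\beta^{-1}(y)\subseteq\mathfrak h^i_-$, all inducing the same $(A,B)$. Pick $w\in A\cap B$. This set is nonempty: proper support forces $G$ to be connected, and any $x$--$y$ path meets $A\cap B$. The convex sets $\beta^{-1}(w),\mathfrak h^1_+,\dots,\mathfrak h^k_+$ pairwise intersect: the $\mathfrak h^i_+$ all contain $\beta^{-1}(x)$, and $\beta^{-1}(w)$ meets each of them by \Cref{lem:fibre-halfspace-char}. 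So \Cref{thm:helly} gives $u\in\beta^{-1}(w)\cap\bigcap_i\mathfrak h^i_+$, and symmetrically $v\in\beta^{-1}(w)\cap\bigcap_i\mathfrak h^i_-$. Hence $k\le d(u,v)\le\diam\beta^{-1}(w)$. Each separation thus has multiplicity at most the fibre bound, and the number of distinct separations between $x$ and $y$ tends to infinity with $d(\beta^{-1}(x),\beta^{-1}(y))$.
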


\begin{proof}
    We first prove (1). 
    Suppose $\Sigma$ has uniform local width. 
    Fix $x \in V(G)$ and let $e_1, \ldots, e_n$ be a long geodesic in the fibre $\beta^{-1}(x)$. Let $\mathfrak h^i \in \vec {\mathcal H}(M_\Sigma)$ denote the $\Theta$-class of $e_i$ and $(A_i,B_i) \in \Sigma$ be the associated separation. Note that each $(A_i,B_i)$ is distinct. By Ramsey's theorem, there exists either a chain or anti-chain of size $k=k(n)$ in $\{(A_i,B_i)\}_i$, where $k$ is a strictly increasing function in $n$. In particular, since the size of chains and anti-chains $\Sigma_x$ is bounded we have that $n$ is necessarily bounded, and so $(M_\Sigma, \beta)$ has bounded fibres. This concludes the proof of (1)(a).

    Suppose now that $(M_\Sigma,\beta)$ is not properly supported. This means that there exists $r \geq 0$ such that for all $R \geq 0$, there exists $x, y \in V(G)$ such that $d(x,y) \geq r$ and $d(\beta^{-1}(x), \beta^{-1}(y)) \leq R$. In particular, by \Cref{thm:kakutani}, at most $R$ distinct unoriented $\Theta$-classes separate $\beta^{-1}(x)$ from $\beta^{-1}(y)$. By \Cref{lem:fibre-halfspace-char}, this means that $\Sigma$ contains at most $R$ elements $(A,B)$ such that $x \in A \sm B$ and $y \in B \sm A$. This proves (1)(b).

    We now prove (2).
    Note first that $\Sigma$ is necessarily a discrete, crossing-finite, ECC separation system by \Cref{prop:nest-cross-induced-seps} and \Cref{prop:weakly-reduced-crossing-finite-gives-nice-seps}.
    Suppose that $\Sigma_x := \{(A,B) \in \Sigma : x \in A \cap B\}$ contains a chain or anti-chain $(A_1,B_1), \ldots (A_n, B_n)$ of length $n$. Let $\mathfrak h^i$ be an oriented $\Theta$-class labelled by $(A_i,B_i)$. Since $(M,\beta)$ is weakly reduced, by \Cref{prop:nest-cross-induced-seps} we have that $\mathfrak h^i_\pm \cap \mathfrak h^j_\pm \neq \emptyset$ for all $i, j$. Then by Lemma~\ref{lem:fibre-halfspace-char}, we have that $\beta^{-1}(x)$ intersects $\mathfrak h^i_+$ and $\mathfrak h^i_-$ for all $i$. Since fibres are convex by \ref{m3}, and halfspaces are convex by \Cref{prop:halfspaces-convex}, by \Cref{thm:helly} we have that there exists 
    $$
    u \in \beta^{-1}(x) \cap \bigcap_{i} \mathfrak h^i_+ \ \ \ \text{and} \  \  \ v \in \beta^{-1}(x) \cap \bigcap_{i} \mathfrak h^i_-.
    $$  
    This implies that $d(u,v) \geq n$ by \Cref{thm:kakutani}, and so the fibre of $x$ in $(M, \beta)$ has diameter at least $n$. This concludes the proof of (2)(a). 

    Suppose now that $(M, \beta)$ is properly supported. This implies that for all $r \geq 0$ there exists $R \geq 0$ such that for all $x,y \in V(G)$ with $d(x,y) \geq R$, we have that $d(\beta^{-1}(x), \beta^{-1}(y)) \geq r$. Again, by \Cref{thm:kakutani} and \Cref{lem:fibre-halfspace-char}, we have that $\Sigma$ has increasing separations. This proves (2)(b).
\end{proof}


\begin{cor}\label{cor:coarse-embed-trichotomy}
    Let $G$ be a $\Gamma$-graph. Then the following are equivalent. 
    \begin{enumerate}
        \item\label{thm:trichotomy1} There exists a $\Gamma$-equivariant coarse embedding of $G$ into a crossing-bounded median graph. 

        \item\label{thm:trichotomy2} There exists a $\Gamma$-canonical, reduced, proper median decomposition of $G$ over a crossing-bounded median graph.

        \item\label{thm:trichotomy3} $G$ admits a $\Gamma$-invariant discrete, crossing-bounded, ECC separation system with uniform local width and increasing separations.
    \end{enumerate}
\end{cor}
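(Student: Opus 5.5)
We prove the cycle of implications (1)$\Rightarrow$(2)$\Rightarrow$(3)$\Rightarrow$(1), combining \Cref{thm:proper-geom-char} with \Cref{thm:proper-dual-decomp-char}.

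\emph{(1)$\Rightarrow$(2).} Let $f : G \to M$ be a $\Gamma$-equivariant coarse embedding into a crossing-bounded median graph $M$. By \Cref{thm:proper-geom-char}(2), using that $M$ is crossing-bounded, the pull-back median decomposition $(M,\beta)$ over $f$ is $\Gamma$-canonical and proper. It need not be reduced, so we reduce it in three steps.
\begin{enumerate}
\item First pass to the weak reduction $(\widehat M,\widehat\beta)$ of \Cref{rem:weak-reduction}. Collapsing $\Theta$-classes is canonical, so the $\Gamma$-action descends. Collapsing can only decrease the dimension, so $\widehat M$ remains crossing-bounded.
\item Next, its dual system $\Sigma=\Sigma_{\widehat M,\widehat\beta}$ is $\Gamma$-invariant, discrete and ECC by \Cref{prop:weakly-reduced-crossing-finite-gives-nice-seps}, and crossing-bounded by \Cref{prop:nest-cross-induced-seps}(1).
\item Finally, take the dual median decomposition $(M_\Sigma,\beta_\Sigma)$. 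It is reduced, $\Gamma$-canonical, and its dual system is exactly $\Sigma$. Since its $\Theta$-classes biject with $\Sigma$ compatibly with crossing, $M_\Sigma$ is crossing-bounded.
\end{enumerate}
Properness of $(M_\Sigma,\beta_\Sigma)$ follows by chaining \Cref{thm:proper-dual-decomp-char}(2) and then (1). This needs $(\widehat M,\widehat\beta)$ to be proper. Alternatively, \Cref{prop:median-decomp-contains-the-dual} embeds $M_\Sigma$ as a median subgraph of $\widehat M$ compatibly with bags, after first fully reducing.

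The step I expect to be the main obstacle is checking that weak reduction preserves properness. Bags only grow under collapse: $\widehat\beta([v])$ is a union. Fibres only shrink in diameter, since collapsing is $1$-Lipschitz on the graph. The delicate point is proper support, because distances between fibres could drop. I would argue via separations. By \Cref{thm:kakutani} and \Cref{lem:fibre-halfspace-char}, the distance between $\beta^{-1}(x)$ and $\beta^{-1}(y)$ counts the $\Theta$-classes $\mathfrak h$ with $x\in A_{\mathfrak h}\sm B_{\mathfrak h}$ and $y \in B_{\mathfrak h}\sm A_{\mathfrak h}$. Such a separation is automatically essential, so none of these classes is collapsed, and the distance is preserved exactly. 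With this observation, \Cref{thm:proper-dual-decomp-char}(2) applies directly to $(\widehat M,\widehat\beta)$ and yields uniform local width and increasing separations. Then \Cref{thm:proper-dual-decomp-char}(1) shows $(M_\Sigma,\beta_\Sigma)$ is proper. This gives (2), and simultaneously (3).

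\emph{(2)$\Rightarrow$(3).} A reduced decomposition is weakly reduced. So \Cref{prop:weakly-reduced-crossing-finite-gives-nice-seps}, \Cref{prop:nest-cross-induced-seps}(1) and \Cref{thm:proper-dual-decomp-char}(2) give that the dual system is discrete, ECC, crossing-bounded, with uniform local width and increasing separations. It is $\Gamma$-invariant because the decomposition is $\Gamma$-canonical.

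\emph{(3)$\Rightarrow$(1).} Given such a $\Sigma$, the dual median decomposition $(M_\Sigma,\beta)$ is $\Gamma$-canonical. It is proper by \Cref{thm:proper-dual-decomp-char}(1). Its median graph $M_\Sigma$ is crossing-bounded, since its $\Theta$-classes correspond to $\Sigma$ and crossing is faithful by \Cref{prop:median-decomp-contains-the-dual}. Then \Cref{thm:proper-geom-char}(1)(a) produces a $\Gamma$-equivariant coarse embedding $G\to M_\Sigma'$. The first cubical subdivision of a crossing-bounded median graph is again a crossing-bounded median graph, since its dimension is unchanged, which gives (1).
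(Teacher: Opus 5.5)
Your proposal is correct and uses the same ingredients as the paper, just arranged as the cycle (1)$\Rightarrow$(2)$\Rightarrow$(3)$\Rightarrow$(1) instead of the paper's (3)$\Rightarrow$(2)$\Rightarrow$(1)$\Rightarrow$(3): pull-back, weak reduction and \Cref{thm:proper-dual-decomp-char}(2) to obtain the separation system, then the dual decomposition with \Cref{thm:proper-dual-decomp-char}(1), and \Cref{thm:proper-geom-char}(1)(a) for the coarse embedding. You also supply details the paper leaves implicit: that weak reduction preserves properness (separating $\Theta$-classes induce essential separations, so they survive the collapse), and that $M_\Sigma$ and its cubical subdivision $M_\Sigma'$ remain crossing-bounded.
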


\begin{proof}
    We have that (\ref{thm:trichotomy3}) $\implies$ (\ref{thm:trichotomy2}) by \Cref{thm:proper-dual-decomp-char}(1). Then, (\ref{thm:trichotomy2}) $\implies$ (\ref{thm:trichotomy1}) follows from \Cref{thm:proper-geom-char}(1)(a). 
    Finally, to see that (\ref{thm:trichotomy1}) $\implies$ (\ref{thm:trichotomy3}), suppose that there exists a $\Gamma$-equivariant coarse embedding $f : G \to M$ of $G$ into a crossing-bounded median graph. By \Cref{thm:proper-geom-char}(2), the pull-back median decomposition $(M,\beta)$ over $f$ is $\Gamma$-canonical and proper. However, $(M,\beta)$ may not be weakly reduced. Let $(N,\alpha)$ denote the weak reduction of $(M,\beta)$ described in \Cref{rem:weak-reduction}. It is easy to see that $(N,\alpha)$ is also proper, but now weakly reduced. Also, $N$ is crossing-bounded since $M$ is.
    By \Cref{prop:nest-cross-induced-seps}, \Cref{prop:weakly-reduced-crossing-finite-gives-nice-seps}, and \Cref{thm:proper-dual-decomp-char}(2), 
    we deduce that (\ref{thm:trichotomy1}) $\implies$ (\ref{thm:trichotomy3}). 
\end{proof}

As an application, we remark that the above results can be used to prove bounds on coarse invariants such as asymptotic dimension. For example, we have the following corollary.

\begin{cor}
    Let $G$ be a graph. If $G$ admits a discrete, crossing-bounded, ECC system of separations with uniform local width and increasing separations, then $G$ has finite asymptotic dimension, bounded above by the maximal cardinality of a set of pairwise-crossing elements of $\Sigma$. 
\end{cor}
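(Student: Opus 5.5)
The plan is to reduce the statement to the trichotomy just established, \Cref{cor:coarse-embed-trichotomy}, together with the known fact that a coarse embedding into a crossing-bounded median graph of dimension $d$ forces asymptotic dimension at most $d$. Take $\Gamma=\{1\}$. By \Cref{thm:proper-dual-decomp-char}(1), the dual median decomposition $(M_\Sigma,\beta)$ has bounded fibres (from uniform local width) and is properly supported (from increasing separations), so it is proper. By \Cref{prop:labels-biject-theta-classes} and the duality theorem, the $\Theta$-classes of $M_\Sigma$ correspond bijectively to $\Sigma$. Since $(M_\Sigma,\beta)$ is crossing-faithful, as noted after \Cref{prop:median-decomp-contains-the-dual}, pairwise-crossing $\Theta$-classes correspond to pairwise-crossing separations. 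Hence $\dim(M_\Sigma)$ equals the maximal cardinality $d$ of a pairwise-crossing subset of $\Sigma$. In particular, $M_\Sigma$ is crossing-bounded.

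Next, apply \Cref{thm:proper-geom-char}(1)(a) with trivial group. By \Cref{rem:discussion-of-proper-geom-char-hyp}(2), no cubical subdivision is needed, so there is a coarse embedding $f\colon G\to M_\Sigma$. Concretely, $f$ sends $x$ to any vertex of $\beta^{-1}(x)$. The upper bound $d(f(x),f(y))\le K_1 d(x,y)+K_1$ comes from \ref{m2} and bounded fibres: consecutive vertices along a path share a bag. The lower bound comes from proper support.

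Finally, asymptotic dimension is monotone under coarse embeddings, and a median graph (equivalently, a CAT(0) cube complex) of dimension $d$ has asymptotic dimension at most $d$ by Wright \cite{wright2012finite}. This gives $\operatorname{asdim}(G)\le\operatorname{asdim}(M_\Sigma)\le d$.

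The main point to check carefully is the identification of $\dim(M_\Sigma)$ with the maximal crossing number of $\Sigma$. Here I would use that the bijection $\overline\ell$ is a pocset isomorphism for the dual decomposition. The inequality in one direction already follows from \Cref{prop:nest-cross-induced-seps}(1), since crossing separations cannot come from nested halfspaces. That direction is all that is needed for the upper bound. The cited bound from \cite{wright2012finite} is external; if a self-contained argument were wanted, that would be the hard step.
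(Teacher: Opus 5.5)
Your argument is the paper's proof. \Cref{thm:proper-dual-decomp-char}(1) makes $(M_\Sigma,\beta)$ proper, and \Cref{thm:proper-geom-char}(1)(a) with trivial group gives a coarse embedding $G\to M_\Sigma$. Wright's bound and monotonicity of asymptotic dimension then finish, using that $\dim(M_\Sigma)$ equals the maximal crossing number $d$ of $\Sigma$.

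Your last paragraph, however, gets the direction backwards. \Cref{prop:nest-cross-induced-seps}(1) only gives $d\le\dim(M_\Sigma)$, which is not the inequality used here. The upper bound needs $\dim(M_\Sigma)\le d$, and so does the finite-dimensionality of $M_\Sigma$ required to apply Wright. That inequality says crossing $\Theta$-classes carry crossing labels. It is exactly the crossing-faithfulness you invoked in your first paragraph, so the proof stands, but that step cannot be dropped. It can also be seen directly: the halfspace of the class labelled $s$ is $\{\mathcal U : s\in\mathcal U\}$, so nested labels give nested $\Theta$-classes by \ref{p2}.
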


\begin{proof}
    The asymptotic dimension of a median graph is bounded above by the maximal dimension of its cubes \cite{wright2012finite}, and asymptotic dimension is monotone under coarse embeddings. The maximal dimension of a cube in $M_\Sigma$ is exactly the maximal cardinality of any set of pairwise-crossing elements of $\Sigma$. The claim now follows from \Cref{thm:proper-dual-decomp-char} and \Cref{thm:proper-geom-char}.
\end{proof}





\begin{rem}\label{rem:quasi-dense}
    It is likely not possible to provide a natural description of quasi-dense support for median decompositions in terms of their dual separation systems without additional hypotheses. This problem is related to the fact that cobounded-ness of actions on dual median graphs in geometric group theory is also somewhat elusive. Indeed, there exists fairly straightforward examples of groups acting cofinitely on a pocset where the induced action on the dual median graph is metrically proper but not cobounded \cite{chatterji2005wall}. As such, finding sufficient conditions for the action on the dual median graph to be cobounded is generally a topic of interest; for example, a very useful theorem of Sageev provides such a criterion in the context of hyperbolic groups \cite[\S3]{sageev1997codimension}; see also \cite[Thm.~2.6]{hsu2010cubulating} for a more self-contained statement.
\end{rem}

\bibliographystyle{plainurlnat}
\bibliography{gc}

\end{document}